\newtheorem{theorem}{Theorem}[section]
\newtheorem{lemma}[theorem]{Lemma}
\newtheorem{prop}[theorem]{Proposition}
\theoremstyle{remark}
\newtheorem{remark}[theorem]{Remark}
\theoremstyle{definition}
\newtheorem{defi}[theorem]{Definition}
\newtheorem*{acknowledgements}{Acknowledgements}
\theoremstyle{remark}
\newtheorem{case}{Case}
\numberwithin{equation}{section}
\newcommand{\N}{\mathbb{N}}
\newcommand{\Z}{\mathbb{Z}}
\newcommand{\R}{\mathbb{R}}
\newcommand{\C}{\mathbb{C}}
\newcommand{\1}{\mathbf{1}}
\DeclareMathOperator{\measure}{meas}
\newcommand{\major}{\mathfrak{M}}
\newcommand{\CC}{\mathcal{C}}
\newcommand{\CE}{\mathcal{E}}
\newcommand{\CM}{\mathcal{M}}
\newcommand{\CN}{\mathcal{N}}
\newcommand{\CQ}{\mathcal{Q}}
\newcommand{\CR}{\mathcal{R}}
\newcommand{\CS}{\mathcal{S}}
\newcommand{\CY}{\mathcal{Y}}
\newcommand{\CT}{\mathcal{T}}
\newcommand{\FM}{\mathfrak{M}}
\newcommand{\FN}{\mathfrak{N}}
\newcommand{\FP}{\mathfrak{P}}
\newcommand{\FQ}{\mathfrak{Q}}
\newcommand{\Fm}{\mathfrak{m}}
\newcommand{\Fn}{\mathfrak{n}}
\newcommand{\Fp}{\mathfrak{p}}
\newcommand{\Fq}{\mathfrak{q}}
\newcommand{\e}{{\rm e}}
\newcommand{\ee}{\varepsilon}
\newcommand{\vphi}{\varphi}
\newcommand{\vth}{\vartheta}
\newcommand{\tchi}{{\tilde \chi}}
\newcommand{\cPhi}{{\check \Phi}}
\DeclareMathOperator{\card}{card}
\newcommand{\dd}{{\rm d}}
\newcommand{\ssum}[1]{\sum_{\substack{#1}}}
\newcommand{\summ}{\mathop{\sum\sum}}
\renewcommand{\mod}[1]{\ ({\rm mod\ }#1)}
\newcommand{\mods}[1]{\ ({\rm mod\ }#1)^\times}
\renewcommand{\Re}{\mathfrak{Re}}
\renewcommand{\Im}{\mathfrak{Im}}
\renewcommand{\setminus}{\smallsetminus}
\begin{document}

\title[Waring's problem with friable numbers]{Weyl sums, mean value estimates, \\ and Waring's problem with friable numbers}

\author{Sary Drappeau}
\address{Aix-Marseille Universit\'{e}, CNRS, Centrale Marseille \\ I2M UMR 7373 \\ 13453 Marseille Cedex \\ France}
\email{sary-aurelien.drappeau@univ-amu.fr}
\thanks{SD was supported by a CRM-ISM post-doctoral fellowship.}

\author{Xuancheng Shao}
\address{Mathematical Institute\\ Radcliffe Observatory Quarter\\ Woodstock Road\\ Oxford OX2 6GG \\ United Kingdom}
\email{Xuancheng.Shao@maths.ox.ac.uk}
\thanks{XS was supported by a Glasstone Research Fellowship.}

\maketitle

\begin{abstract}
In this paper we study Weyl sums over friable integers (more precisely~$y$-friable integers up to~$x$ when~$y = (\log x)^C$ for a large constant~$C$). In particular, we obtain an asymptotic formula for such Weyl sums in major arcs, nontrivial upper bounds for them in minor arcs, and moreover a mean value estimate for friable Weyl sums with exponent essentially the same as in the classical case. As an application, we study Waring's problem with friable numbers, with the number of summands essentially the same as in the classical case.
\end{abstract}

\section{Introduction}

\subsection{Waring's problem}

Posed in 1770, Waring's problem~\cite{Waring} is the question of whether or not, given a positive integer~$k$, there exist positive integers~$s$ and~$N_0$ such that every integer~$N>N_0$ can be written as a sum of~$s$ $k$-th powers:
\begin{equation}\label{eq:waring}
N = n_1^k + \cdots + n_s^k.
\end{equation}
Here and in the rest of the paper, by a~$k$-th power we mean the~$k$-th power of a non-negative integer. Call~$G(k)$ the least such number~$s$. After Hilbert~\cite{Hilbert} proved that~$G(k)<\infty$, there came the question of precisely determining the value of~$G(k)$. This question, usually attacked by the circle method, has motivated an outstanding amount of research in the theory of exponential sums. Referring to the survey by Vaughan and Wooley~\cite{VaughanWooley-survey} for a precise account of the vast history of this problem, we mention Wooley's state-of-the-art result~\cite{Woo95} that
\begin{equation}\label{eq:estim-Gk}
G(k) \leq k(\log k + \log\log k + 2 + O(\log\log k / \log k)).
\end{equation}
Conjecturally~$G(k) = O(k)$, and even~$G(k)=k+1$ if there are no ``local obstructions''.

To obtain an asymptotic formula for the number of solutions to the equation~\eqref{eq:waring}, we need more variables than the bound given in~\eqref{eq:estim-Gk}. The current best published result, following from Wooley's work~\cite{WooleyMGEC} on the Vinogradov main conjecture, gives such as asymptotic formula when
$$ s \geq Ck^2 + O(k) $$
for~$C=1.542749...$.  The Vinogradov main conjecture has very recently been proved by Bourgain, Demeter and Guth~\cite{BDG-Decoupling}, which would allow~$C=1$.

\subsection{Friable integers}

In this paper we study the representation problem \eqref{eq:waring} with the condition that the variables~$n_j$ have only small prime factors. Given~$y\geq 2$, a positive integer~$n$ is called \emph{$y$-friable}, or~\emph{$y$-smooth}, if its largest prime factor~$P(n)$ is at most~$y$. Estimates involving friable numbers have found applications in different areas in number theory. In fact they are a crucial ingredient in the proof of the estimate~\eqref{eq:estim-Gk} for~$G(k)$, and so are naturally studied in conjunction with Waring's problem. We refer to the surveys~\cite{HT-survey, Granville, Moree} for an account of classical results on friable numbers and their applications. 

The following standard notations will be used throughout the paper. For~$2 \leq y \leq x$, let
$$ S(x, y) := \{ n\leq x : P(n)\leq y\}, \qquad \Psi(x, y) := \card S(x, y). $$
The size of the parameter~$y$ with respect to~$x$ is of great importance in the study of friable numbers. The lower~$y$ is, the sparser the set~$S(x,y)$ is, and the more difficult the situation typically becomes. For example, when~$y = x^{1/u}$ for some fixed~$u \geq 1$, we have
\[ \Psi(x,y) \sim \rho(u) x \qquad (x\to\infty), \]
so that~$S(x,y)$ has positive density. Here~$\rho(u)$ is Dickman's function. On the other hand, when~$y = (\log x)^\kappa $ for some fixed~$\kappa>1$, we have
$$ \Psi(x, (\log x)^\kappa) = x^{1-1/\kappa + o(1)} \qquad (x\to\infty). $$
Because of this sparsity, many results about friable numbers from the second example above were until recently only known conditional on assumptions such as the Generalized Riemann Hypothesis.

The main result in our paper (Theorem~\ref{thm:waringk} below) is an asymptotic formula in Waring's problem with~$(\log N)^\kappa$-friable variables, when~$\kappa$ is sufficiently large. Here we state a special case of it.

\begin{theorem}\label{coro:principal-short}
For any given~$k\geq 2$, there exist~$\kappa(k)$ and~$s(k)$, such that every sufficiently large positive integer~$N$ can be represented in the form~\eqref{eq:waring}, with each~$n_j \in S(N^{1/k}, (\log N)^{\kappa})$. Moreover, we can take~$s(2) = 5$, $s(3) = 8$, and
$$ s(k) = k(\log k + \log\log k + 2 + O(\log\log k / \log k)) $$
for large~$k$.
\end{theorem}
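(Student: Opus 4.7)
The plan is to apply the Hardy--Littlewood circle method to the friable Weyl sum
\[
f(\alpha) \assign \sum_{n\in S(N^{1/k},\,y)} \e(\alpha n^k), \qquad y \assign (\log N)^{\kappa},
\]
where throughout $\e(\beta)\assign \exp(2\pi i \beta)$. The number of representations of~$N$ in the form~\eqref{eq:waring} with each $n_j \in S(N^{1/k},y)$ equals $R_s(N) = \int_0^1 f(\alpha)^s \e(-N\alpha)\,\dd\alpha$, which I would split as $R_s(N) = \int_\major + \int_\minor$, with $\major$ a union of narrow neighbourhoods of fractions $a/q$ of denominator $q\le Q$ for $Q$ a suitable power of $\log N$, and $\minor = [0,1]\setminus\major$.

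On $\major$, I would substitute the asymptotic formula for $f(\alpha)$ announced in the abstract. Assembling the contributions over $a/q$ produces the expected main term
\[
\int_\major f(\alpha)^s \e(-N\alpha)\,\dd\alpha \;\sim\; \mathfrak{S}(N)\,\mathfrak{J}(N)\,\frac{\Psi(N^{1/k},y)^s}{N},
\]
with $\mathfrak{S}(N)$ the singular series and $\mathfrak{J}(N)$ the singular integral. Provided $\kappa$ is large enough that every prime $p\leq Q$ satisfies $p\leq y$, the local densities building $\mathfrak{S}(N)$ coincide with those of the classical Waring problem, since friability is a \emph{global} constraint that does not restrict residues modulo such small $p$. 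Positivity of $\mathfrak{S}(N)$ (under the usual local-solubility hypothesis, trivially verified for the ranges of~$s$ under consideration) therefore follows from the classical analysis, while $\mathfrak{J}(N)\asymp 1$ as soon as $s\geq k+1$.

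On $\minor$, I would apply the standard dissection
\[
\int_\minor |f(\alpha)|^{s}\,\dd\alpha \;\leq\; \Bigl(\sup_{\alpha \in \minor}|f(\alpha)|\Bigr)^{s-2t} \int_0^1 |f(\alpha)|^{2t}\,\dd\alpha,
\]
inserting the minor arc pointwise bound and the $2t$-th moment, both provided by the paper with quality matching their classical counterparts. For $k=2,3$, friable analogues of Hua's inequality (with $2t=4$ and $2t=8$) together with a friable Weyl bound reach the thresholds $s(2)=5$ and $s(3)=8$. For large $k$, feeding in the announced mean value of Vinogradov/decoupling strength along with the friable Weyl bound reproduces, modulo $(\log N)^{O(1)}$ losses, the classical admissible range~\eqref{eq:estim-Gk}, yielding the stated $s(k)$.

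The main obstacle is the minor arc input. Because $S(N^{1/k},y)$ has cardinality only $N^{1/k - \Theta(1/\kappa)}$, the pointwise bound on $|f(\alpha)|$ must save a fixed power of $\log N$ over the trivial size $\Psi(N^{1/k},y)$, and the $2t$-th moment must match the classical estimate up to $(\log N)^{O(1)}$; together these must suffice for the product $(\sup|f|)^{s-2t}\cdot\int|f|^{2t}$ to beat the size $\Psi(N^{1/k},y)^s/N$ of the main term. Granted both inputs in the quality announced in the abstract, the remainder reduces to a standard circle-method bookkeeping exercise, and Theorem~\ref{coro:principal-short} follows by taking $\kappa = \kappa(k)$ large enough to absorb the sparsity of friable numbers into the available logarithmic savings and to secure $Q\leq y$.
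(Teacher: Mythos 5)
Your high-level outline (major/minor dissection, pointwise minor arc saving, mean value estimate) does mirror the paper's, but the proposal has a genuine gap in the minor arc analysis. You write that the $2t$-th moment ``must match the classical estimate up to $(\log N)^{O(1)}$,'' and plan to use a friable analogue of Hua's inequality of that strength. That is not enough. What is needed is
\[
\int_0^1 |f(\alpha)|^{2t}\,\dd\alpha \;\ll\; \frac{\Psi(N^{1/k},y)^{2t}}{N},
\]
and this is \emph{much} smaller than the classical Hua bound $N^{2t/k-1+\ee}$: since $\Psi(N^{1/k},y) = N^{\alpha/k + o(1)}$ with $\alpha < 1$, the gap is a factor $(x/\Psi)^{2t} = N^{2t(1-\alpha)/k + o(1)}$, a fixed \emph{power of $N$}, not a power of $\log N$. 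With $y=(\log N)^\kappa$ one has $1-\alpha \asymp 1/\kappa$, so no $(\log N)^{O(1)}$ saving from the pointwise minor arc bound can absorb it. The heart of the paper is precisely that such a sharp $L^p$ mean value (Theorem~\ref{cor:mv-k}) can be proved for friable Weyl sums by a restriction-theoretic large value argument in the spirit of Bourgain and Harper (Propositions~\ref{thm:restriction} and~\ref{prop:mv}), \emph{not} by Hua's lemma or its friable analogues, which lose the power of $N$ in question. Without this ingredient, the circle method cannot close.

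A secondary point: even granted the sharp mean value, its proof requires the exponent to strictly exceed a threshold $p_0(k)$, so one cannot take $2t = p_0(k)$ exactly as in your dissection (and for $k=3$, $s=8$, $2t=8$ leaves no supremum factor at all). The paper instead peels off a small fractional power, $\int_{\minor}|E_k|^s \le (\sup_{\minor}|E_k|)^{0.1}\int_0^1 |E_k|^{s-0.1}$, so that $s-0.1 > p_0(k)$ and the loss in the mean value exponent is compensated by taking $\kappa$ large. Finally, the assertion that the local factors ``coincide with those of the classical Waring problem'' is incorrect: a random $y$-friable integer is biased toward divisibility by small primes, and the relevant local measure is the $\mu_q$ defined in Section~\ref{sec:waring} (depending on $\alpha$), not the uniform one. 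Positivity of $\prod_p\beta_p$ still holds, but by a separate comparison with the classical densities (Proposition~\ref{prop:beta-p}), not by identity.
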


An overview of the proof will be given in Section~\ref{sec:overview}. In the remainder of this introduction, we summarize some previous works on Waring's problem with friable variables.

\subsection{Past works}

If the variables are only required to be mildly friable (more precisely with the friability parameter~$\exp(c(\log N\log\log N)^{1/2})$ for some~$c > 0$ instead of~$(\log N)^{\kappa}$), then the existence of solutions to \eqref{eq:waring} with friable variables has been proved by Balog-S\'{a}rk\"{o}zy\cite{BalogSarkozy} (for~$k=1$), and Harcos~\cite{Har97} (for larger~$k$, using a key ingredient from~\cite{Woo92}). In the case~$k=3$, Br\"{u}dern and Wooley~\cite{BW01} proved that one can take~$s=8$ mildly friable variables.

The case~$k=2$ with~$4$ variables or less is particularly interesting, due to the failure of a naive application of the circle method. Without any restrictions on the variables, Kloosterman's refinement of the circle method can work (see~\cite[Chapter~20.3]{IK}), but there is no clear way to use it with friability restrictions. The best bound so far, achieved by Blomer, Br\"{u}dern, and Dietmann~\cite{BBD-smooth4} from Buchstab's identity to relax the friability condition, gets the allowable friable parameter~$y=x^{365/1184}$ . 

Finally, the most recent breakthough came in the case~$k=1$. This was first studied in the aforementioned work of Balog and S\'{a}rk\"{o}zy~\cite{BalogSarkozy} who obtain a lower bound for the number of solutions with~$s=3$ mildly friable variables. Assuming the Riemann hypothesis for Dirichlet~$L$-functions, Lagarias and Soundararajan~\cite{LagariasSound} improved the friability level to~$y=(\log N)^{8+\ee}$ for any~$\ee>0$. An asymptotic formula for the number of solutions was first reached in~\cite{Breteche-sommes}, using earlier results on friable exponential sums~\cite{FouvryTenenbaum, Breteche-expo}. Subsequent works~\cite{BG, D2014} eventually led to the friability level~$y= \exp\{c(\log N)^{1/2}(\log\log N)\}$ for some absolute~$c > 0$.

The situation changed drastically with the work of Harper~\cite{Harper} who proved \emph{unconditionally} that for~$k=1$, one can take~$s=3$ and~$y=(\log x)^C$ for large enough~$C$. This is the starting point of our present work; we show that Harper's approach can be adapted to treat higher powers as well, yielding results of comparable strength with what was previously known for mildly friable variables.

\begin{acknowledgements}
The authors are grateful to A. J. Harper, R. de la Bret{\` e}che and T. Wooley for helpful discussions and remarks. This work was started when XS was visiting the CRM (Montreal) during the thematic year in number theory theory in Fall 2014, whose hospitality and financial support are greatly appreciated.
\end{acknowledgements}

\section{Overview of results}\label{sec:overview}

In this section, we state the main result on Waring's problem with friable variables, as well as the exponential sum estimates required. 

To begin, we recall the ``saddle-point''~$\alpha(x, y)$ for~$2 \leq y \leq x$, introduced by Hildebrand and Tenenbaum~\cite{HT} and which is now standard in modern studies of friable numbers. It is defined by the implicit equation
\begin{equation}
\sum_{p\leq y} \frac{\log p}{p^\alpha - 1} = \log x.\label{eq:def-alpha}
\end{equation}
By~\cite[Theorem~2]{HT}, we have
\begin{equation}\label{eq:approx-alpha}
\alpha(x, y) \sim \frac{\log(1 + y/\log x)}{\log y}
\end{equation}
as~$y\to\infty$. In particular, for fixed~$\kappa\geq 1$, we have
$$ \alpha(x, (\log x)^\kappa) = 1 - 1/\kappa + o(1) \qquad (x\to\infty). $$
The relevance of~$\alpha$ to the distribution of friable numbers is hinted by the estimate~$\Psi(x, y) = x^{\alpha + o(1)}$ as~$x, y\to\infty$ (see de Bruijn~\cite{dB} and also~\cite[Theorem 1]{HT} for a more precise asymptotic of~$\Psi(x,y)$ in terms of the saddle point).

\subsection{Exponential sum estimates}

Throughout this paper, we use the standard notation
$$ \e(x) := \e^{2\pi i x} \qquad (x\in\C). $$
To study Waring's problem via the circle method, we need to understand the exponential sums
\[ E_k(x, y ; \vth) := \sum_{n\in S(x, y)}\e(n^k\vth) \qquad (\vth\in\R). \]
When~$\vth$ is approximated by a reduced fraction~$a/q$, we will frequently write
$$ \vth = \frac aq + \delta, \ \ \CQ = q(1 + |\delta| x^k), $$
where~$0 \leq a < q$ and~$(a,q) = 1$. Our estimate for~$E_k(x, y ; \vth)$ involves the ``local'' singular integral and singular series, defined by
\begin{equation}
\cPhi(\lambda, s) := s\int_0^1 \e(\lambda t^k)t^{s-1}\dd t \qquad (s\in\C, \Re(s)>0, \lambda\in\C),\label{eq:def-cPhi}
\end{equation}
\begin{equation}
H_{a/q}(s) := \ssum{d_1d_2|q \\ P(d_1d_2)\leq y} \frac{\mu(d_2)}{(d_1d_2)^s\vphi(q/d_1)}\ssum{b\mod{q}\\ (b, q)=d_1} \e\Big(\frac{ab^k}{q}\Big) \qquad (s\in\C).\label{eq:def-Hq}
\end{equation}

 In Section~\ref{sec:major} we prove the following major arc estimate, which generalizes~\cite[Th\'{e}or\`{e}me~4.2]{BG} and~\cite[Th\'{e}or\`{e}me~1.2]{D2014} to higher powers.

\begin{theorem}\label{thm:estim-majo}
Fix a positive integer~$k$. There exists~$C = C(k) > 0$ such that the following statement holds.  Let~$2 \leq y \leq x$ be large and let~$\alpha = \alpha(x,y)$. Let~$\vth \in [0,1]$ and write
$$ \vth = \frac aq + \delta, \ \ \CQ = q(1 + |\delta| x^k), $$
for some~$0 \leq a < q$ with~$(a,q)=1$. For any~$A, \ee > 0$, if~$y \geq (\log x)^{CA}$ and~$\CQ \leq (\log x)^A$, then
\begin{equation}
\begin{aligned}
\frac{E_k(x, y ; \vth)}{\Psi(x, y)} = &\ \cPhi(\delta x^k, \alpha)H_{a/q}(\alpha) + O_{\ee, A}\Big( \CQ^{-1/k+2(1-\alpha)+\ee} u_y^{-1} \Big),
\end{aligned}\label{eq:estim-majo}
\end{equation}
where~$u_y$ is defined in~\eqref{eq:notations-uy}. In particular, under the same conditions we have
\begin{equation}
E_k(x, y ; \vth) \ll_{\ee, A} \Psi(x, y) \CQ^{-1/k+2(1-\alpha)+\ee}.
\label{eq:arcmaj-majoration}
\end{equation}
\end{theorem}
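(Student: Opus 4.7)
The strategy follows the standard saddle-point approach for friable major arcs. I would start by splitting~$n \in S(x, y)$ according to~$d_1 := (n, q)$ and its residue class~$b \mod q$, so that the oscillation~$\e(n^k a/q) = \e(a b^k/q)$ pulls out. This reduces matters to understanding, for each admissible~$b$, the inner sum
\[ \sum_{\substack{n \in S(x, y) \\ n \equiv b \mod q}} \e(n^k \delta). \]
The key input is the equidistribution
\[ \sum_{\substack{n \in S(x, y) \\ n \equiv b \mod q}} \e(n^k \delta) = \frac{1}{\vphi(q/d_1)} \sum_{\substack{n \in S(x, y) \\ (n, q) = d_1}} \e(n^k \delta) + \text{error}, \]
valid uniformly in~$\delta$ in the range~$q \leq (\log x)^A$,~$y \geq (\log x)^{CA}$. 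Such an equidistribution is unconditionally available via Harper's method in this range, and this is where the saving~$u_y^{-1}$ ultimately enters.

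Removing the coprimality condition~$(n, q) = d_1$ by M\"obius inversion introduces a sum over~$d_2 \mid q/d_1$ with~$P(d_2) \leq y$ (automatic because~$d_2 \mid n$). Setting~$D = d_1 d_2$ and~$n = D m$, the surviving inner sum becomes~$\sum_{m \in S(x/D, y)} \e(m^k D^k \delta)$. Applying Abel summation against the Hildebrand--Tenenbaum approximation~$\Psi(t, y) \approx (t/x)^\alpha \Psi(x, y)$ and changing variables~$t = u x/D$ gives the clean evaluation
\[ \sum_{m \in S(x/D, y)} \e(m^k D^k \delta) \approx \Psi(x, y)\, D^{-\alpha}\, \cPhi(\delta x^k, \alpha). \]
Reassembling the pieces over~$d_1, d_2, b$, the coefficient of~$\Psi(x, y)\, \cPhi(\delta x^k, \alpha)$ matches the definition of~$H_{a/q}(\alpha)$, yielding the desired main term.

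For the error~$\CQ^{-1/k + 2(1-\alpha) + \ee} u_y^{-1}$, the~$\CQ^{-1/k}$ decay appears through two regimes: when~$|\delta| x^k \ll 1$, it is supplied by the Weyl/Mordell bound on the complete exponential sum~$\sum_{b \mod q,\, (b, q) = d_1} \e(a b^k/q)$; when~$|\delta| x^k \gg 1$, it comes from the stationary-phase decay~$\cPhi(\lambda, \alpha) \ll (1 + |\lambda|)^{-1/k}$. The factor~$q^{2(1-\alpha) + \ee}$ reflects the accumulated loss from summing the weight~$D^{-\alpha}$ over divisors~$D \mid q$ combined with the M\"obius-twisted Gauss sum estimates, while~$u_y^{-1}$ is the unconditional saving delivered by the friable Siegel--Walfisz input.

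The main obstacle is the equidistribution step, which must hold uniformly in the oscillatory twist~$\e(n^k \delta)$. The natural route is to expand in Dirichlet characters modulo~$q/d_1$, reducing matters to bounds on twisted friable oscillatory sums~$\sum_{n \in S(x, y)} \chi(n)\, \e(n^k \delta)$ for non-principal~$\chi$. Controlling these requires a friable Siegel--Walfisz type theorem obtained by contour integration against the friable Dirichlet series~$\prod_{p \leq y}(1 - \chi(p) p^{-s})^{-1}$ in the zero-free region available for~$y \geq (\log x)^{CA}$, in the spirit of Harper's~$k = 1$ argument. Carrying this out uniformly in~$\delta$, and in particular handling the oscillation in the vertical integrals appearing along the saddle-point contour, is the technically delicate part.
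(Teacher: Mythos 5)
The overall skeleton you outline---splitting $n$ by $(n,q)$ and residue classes, expanding in Dirichlet characters, isolating the principal-character contribution as $\cPhi(\delta x^k,\alpha)H_{a/q}(\alpha)$, and bounding the rest via friable Siegel--Walfisz---is the right one, and the way you explain where the $\CQ^{-1/k}$ decay (Gauss sum versus stationary phase of $\cPhi$) and the $q^{2(1-\alpha)+\ee}$ loss come from is essentially accurate. But there is one place where your proposed route diverges from the paper's in a way that matters.

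You say the ``technically delicate part'' is proving the twisted friable oscillatory bound $\sum_{n\in S(x,y)}\chi(n)\e(n^k\delta)$ uniformly in $\delta$, via contour integration against $\prod_{p\leq y}(1-\chi(p)p^{-s})^{-1}$ with the oscillation carried along the saddle contour. The paper avoids this entirely. In Proposition~\ref{prop:Ek-Mk}, the character estimate is proved \emph{only at $\delta=0$}, i.e.\ a genuine friable Siegel--Walfisz bound for $\Psi(x,y;\chi)$, which is exactly Harper's~\cite{HarperBV} input and does not involve any oscillation. The extension to general $\delta$ is then achieved by integration by parts: writing
\[
E_k(x,y;\vth)=\e(\delta x^k)E_k(x,y;a/q)-2\pi i\delta\int_{x/Y}^x kt^{k-1}\e(\delta t^k)E_k(t,y;a/q)\,\dd t+O(\Psi(x/Y,y)),
\]
applying the $\delta=0$ estimate to $E_k(t,y;a/q)$ for each $t$, and accepting a loss of $(1+|\delta|x^k)$ (which is harmless on the major arcs). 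This reduction is what makes the argument feasible: one never has to control $\zeta(s,y;\chi)$ simultaneously with an oscillatory weight. If you instead try to run the contour argument with $\e(n^k\delta)$ built in, you are in effect re-proving Harper's zero-density/character-sum machinery with an extra oscillatory kernel, which is a substantially harder problem (and is not what the paper does). So the delicate step you flagged is, in the paper's organization, bypassed rather than solved.

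Two further things your sketch elides. First, the character-sum step must treat a possible exceptional (Siegel) character separately---the $Y^{-c}$ saving of \eqref{eq:borne-carac-normal} is only available for non-exceptional $\chi$, and the exceptional one needs the weaker but still sufficient bound \eqref{eq:borne-carac-siegel} together with a Gauss-sum saving from Lemma~\ref{lemme:gauss}; without this the non-principal error cannot be controlled. Second, ``Abel summation against $\Psi(t,y)\approx(t/x)^\alpha\Psi(x,y)$'' gives the right heuristic for why $\cPhi(\delta x^k,\alpha)$ appears, but the crude $(t/x)^\alpha$ approximation is only an upper bound (Lemma~\ref{lem:adam-smooth-2}); to get a genuine main term the paper uses Saias's theorem $\Psi=\Lambda(1+O(\CY_\ee^{-1}))$ when $y$ is large (Proposition~\ref{prop:Ek-bigy}) and Perron plus the saddle-point method for $\zeta(s,y)\cPhi(\delta x^k,s)x^s/s$ when $y$ is small (Proposition~\ref{prop:Ek-smally}). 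Your sketch, as written, would not produce a main term with a controlled error.

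Once those gaps are closed, the rest of your reassembly (recognizing the divisor sum as $H_{a/q}(\alpha)$, using Lemmas~\ref{lem:phi-check} and~\ref{lem:Haq-bound} for the final bound) matches the paper.
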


Here~$u = (\log x)/\log y$ as usual. By \eqref{eq:approx-alpha}, we can make~$1-\alpha$ in the statement above arbitrarily small by taking~$A$ large enough. Thus the upper bound~\eqref{eq:arcmaj-majoration} has nearly the same strength as the classical major arc estimates for complete exponential sums.

In Section~\ref{sec:minor} we prove the following minor arc bound, which involves generalizing \cite[Theorem 1]{Harper} to higher powers.

\begin{theorem}\label{thm:estim-minor}
Fix a positive integer~$k$.  There exists~$K = K(k) > 0$ and~$c = c(k) > 0$ such that the following statement holds. Let~$2 \leq y \leq x$ be large with~$y \geq (\log x)^K$. Assume that~$|\vth - a/q| \leq 1/q^2$ for some~$0 \leq a < q$ with~$(a, q)=1$. Then
$$ E_k(x, y ; \vth) \ll \Psi(x,y) \Big(\frac1q + \frac{q}{x^k}\Big)^c. $$
\end{theorem}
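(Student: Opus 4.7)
The plan is to adapt Harper's argument for $k=1$ \cite{Harper} to the higher-power setting. The heart of Harper's proof is a decomposition of the friable indicator via a Heath--Brown/Vaughan-type identity into a small number of Type~I and Type~II bilinear pieces, bounded via classical linear exponential sum estimates together with an averaging argument exploiting the Dirichlet hypothesis on $\vth$. The required generalization replaces these linear exponential sum bounds by Weyl's inequality for $k$-th power sums at appropriate points, and requires redoing the combinatorial book-keeping to make the Type~I / Type~II dichotomy compatible with the weaker Weyl bounds for $k \geq 2$.

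Concretely, I would first deploy a variant of Harper's combinatorial identity to write
\[
E_k(x,y;\vth) = \sum_{i=1}^{I} \sum_{n = m\ell} c^{(i)}_m d^{(i)}_\ell \, \e((m\ell)^k \vth) + (\text{acceptable error}),
\]
where $I \leq (\log x)^{O_k(1)}$, the coefficients $c^{(i)}_m, d^{(i)}_\ell$ are bounded by divisor-like functions, and the variables $m,\ell$ lie in dyadic ranges that (depending on $i$) put the piece into either Type~I or Type~II form. For Type~I pieces, fixing the outer variable $m$, the inner sum is a pure Weyl sum in $m^k \vth$; Weyl's inequality then gives
\[
\sum_{L < \ell \leq L'}\e(\ell^k (m^k \vth)) \ll L \Big(\frac{1}{q_m} + \frac{1}{L} + \frac{q_m}{L^k}\Big)^{1/2^{k-1}},
\]
where $q_m$ is the denominator of a convergent to $m^k \vth$. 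Using the hypothesis $|\vth - a/q| \leq 1/q^2$ and averaging over $m$, a standard counting of well-approximated values produces a saving of the shape $(1/q + q/x^k)^c$. For Type~II pieces, I would apply Cauchy-Schwarz on $m$ to square out $\ell$, then perform $k-1$ successive Weyl-differencing steps on the difference variables to linearize the $k$-th power phase, reducing to a weighted sum of linear exponential sums whose arguments are rational multiples of $\vth$ by products like $n^{k-1}-(n')^{k-1}$; these are again bounded by counting rational approximations to $\vth$, giving the desired saving.

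The main obstacle is combinatorial rather than analytic: one must ensure that every dyadic range $(M_i, L_i)$ produced by the decomposition is either Type~I admissible (meaning $L$ is large enough for Weyl's inequality to give a nontrivial bound) or Type~II admissible (both $M$ and $L$ in an intermediate zone). For $k\geq 2$, Weyl's inequality is strictly weaker than the linear case, so both admissible zones shrink as $k$ grows; consequently the decomposition---and specifically the iteration step where Harper inductively exploits the distribution of the largest prime factor---must be organized so as to keep all resulting ranges inside the admissible region. This is the source of the dependence $K=K(k)$ and of the requirement $y \geq (\log x)^K$, which is needed so that $\Psi(x,y)$ has enough density to absorb the Weyl losses and to control the divisor-like coefficients. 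Adapting Harper's inductive arrangement to the $k$-th power setting, while carefully tracking coefficient sizes and the number of pieces, will be the bulk of the technical work.
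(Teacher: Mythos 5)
Your high-level picture---decompose into bilinear pieces, apply a Weyl-type bound, and use Cauchy--Schwarz plus a counting argument to recover a power saving in $q$---captures the flavor of one part of the proof, but it misdescribes both Harper's method and how the paper actually generalizes it, and it omits several essential ingredients.

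First, neither Harper's argument for $k=1$ nor the paper's proof uses a Heath--Brown/Vaughan combinatorial identity. The bilinear structure comes instead from the \emph{multiplicative} structure of friable integers: every $n \in S(x,y)$ with $n$ in an appropriate range can be written uniquely as $n = mn'$ with $m$ a product of the \emph{smallest} prime factors of $n$ lying in a prescribed window $(L, P^+(m)L]$ and $n'$ the complementary cofactor (with $P^-(n') \geq P^+(m)$). This gives a genuine Type~II sum with no divisor-like coefficient growth and no exceptional ranges, which is exactly what makes the argument go through for sparse friable sets; a Vaughan-style identity would produce coefficients and ranges that would be very hard to control against the sparse weight $\Psi(x,y)$.

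Second, the paper's treatment after Cauchy--Schwarz is not $k-1$ successive Weyl-differencing steps. Cauchy--Schwarz is applied to square out the cofactor $n$, and the resulting inner sum over $m$ with phase $m^k \vth (n_1^k - n_2^k)$ is bounded directly by a sharpened complete Weyl bound (Lemma~\ref{lemme:Weyl-qpetit}, which is essentially \cite[Lemma 4.4]{GT-manifold} and gives decay not only in $q$ but also in $|\delta|$). The resulting sum over $n_1, n_2$ is then handled not by ``counting rational approximations'' but by the friable-in-short-AP bound of Lemma~\ref{lem:adam-smooth-3}, combined with a delicate accounting of gcds with $q$ and $q^\infty$. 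The loss of $\CQ^{2(1-\alpha)}$ that shows up is precisely the cost of the factorization window $L = 4y\CQ$, and controlling it is where the condition $y \geq (\log x)^K$ enters.

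Third, you do not address the case structure that the paper needs. The argument just described (Proposition~\ref{estim-verysmooth-minor}) only applies when $y \leq x^\eta$, $\CQ \leq x^{0.1}$, and $\CQ$ is at least a large power of $\log x$. For very small $\CQ$ one must fall back on the major arc asymptotic of Theorem~\ref{thm:estim-majo}, since Weyl's inequality gives nothing there. For $\vth$ genuinely in minor arcs in the classical sense ($\CQ \geq x^{0.1}$) the paper quotes Wooley's mildly-friable minor arc estimate (Proposition~\ref{prop:weyl-smooth-wooley}). And for $y \geq x^\eta$ one cannot exploit the friable factorization at all; the paper uses an inductive inclusion-exclusion over the largest prime factor (Proposition~\ref{prop:weyl-large-y}), together with a Vinogradov-type lemma (Lemma~\ref{lem:vinogradov-polynomial}) and bounds for Weyl sums over primes. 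Without these complementary pieces the argument does not cover the full range of $\vth$ and $y$ in the theorem.
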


For mildly friable variables, this was proved by Wooley \cite[Theorem 4.2]{Woo95b}, with a very good exponent~$c(k) \asymp (k \log k)^{-1}$. By following the proof, one can prove Theorem~\ref{thm:estim-minor} with~$c(k)$ depending on~$k^{-1}$ polynomially. 

\subsection{Mean value estimates}

We complement the estimates of the previous sections by the study of moments:
\begin{equation}
\int_0^1|E_k(x, y ; \vth)|^p \dd\vth, \qquad (p\geq 0).\label{eq:def-moment}
\end{equation}
Indeed, the exponential sum estimates described above lead to Corollary~\ref{coro:principal-short} for some (potentially large)~$s$. To reduce the number of variables, we need the following mean value estimate, which generalizes \cite[Theorem 2]{Harper} to higher powers.  We refer to the introduction of~\cite{Harper} for a detailed explanation on the necessity of such a mean value estimate when dealing with a sparse set of friable numbers.

\begin{theorem}\label{cor:mv-k}
Fix a positive integer~$k$.  Let~$2 \leq y \leq x$ be large and let~$\alpha = \alpha(x,y)$. There exists~$p_0 = p_0(k) \geq 2k$ such that for any~$p>p_0$, we have
\[
\int_0^1 | E_k(x, y; \vth) |^{p} \dd\vth \ll_{p,k} \Psi(x,y)^p x^{-k}, 
\]
provided that~$1-\alpha \leq c \min(1, p-p_0)$ for some sufficiently small~$c = c(k) > 0$. Moreover, we may take~$p_0(1)=2$, $p_0(2)=4$, and $p_0(3) = 8$. If~$y \leq x^c$ for some sufficiently small~$c = c(k) > 0$, then we may take~$p_0(3)=7.5907$ and~$p_0(k)=k(\log k + \log\log k + 2 + O(\log\log k / \log k)$ for large~$k$.
\end{theorem}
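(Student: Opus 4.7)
The plan is to follow Harper's restriction-theoretic strategy, decomposing the integral via a Hardy--Littlewood dissection at threshold $Q_0 = (\log x)^A$, with $A = A(p, k)$ large enough that Theorem~\ref{thm:estim-majo} applies on the major arcs, and then handling major and minor arcs separately. On the major arcs, substituting the asymptotic of Theorem~\ref{thm:estim-majo} reduces the $L^p$ contribution to that of the leading term $\Psi(x,y) \cPhi(\delta x^k, \alpha) H_{a/q}(\alpha)$, which can be computed directly. The expected decay $|\cPhi(\lambda,\alpha)| \ll \min(1, |\lambda|^{-1/k})$ makes $\int |\cPhi(\delta x^k,\alpha)|^p \dd\delta \ll x^{-k}$ as soon as $p > k$, while Weil-type estimates on the Gauss-type sums defining $H_{a/q}(\alpha)$ yield $|H_{a/q}(\alpha)| \ll q^{-1/k + O(1-\alpha)}$, so that $\sum_{q,a}|H_{a/q}(\alpha)|^p$ converges provided $p > 2k$ and $1-\alpha$ is small relative to $p-p_0$. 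Together these give exactly the predicted $\Psi(x,y)^p x^{-k}$ from the major arcs.

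The minor arc contribution (and the error in the major arc asymptotic) is handled by the standard interpolation
\begin{equation*}
\int_{\minor} |E_k(x,y;\vth)|^p \dd\vth \leq \Big(\sup_{\vth \in \minor} |E_k(x,y;\vth)|\Big)^{p-p_0} \int_0^1 |E_k(x,y;\vth)|^{p_0} \dd\vth.
\end{equation*}
Theorem~\ref{thm:estim-minor} supplies the sup bound $\sup_{\minor} |E_k(x,y;\vth)| \ll \Psi(x,y) Q_0^{-c}$; taking $A$ large ensures that $Q_0^{-c(p-p_0)}$ absorbs any polynomial loss in $\log x$. What remains is the base endpoint estimate $\int_0^1 |E_k(x,y;\vth)|^{p_0} \dd\vth \ll \Psi(x,y)^{p_0} x^{-k}$, which is a restriction-type inequality for the sparse set of friable $k$-th powers, and is the heart of the result.

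The main obstacle is establishing this endpoint bound with the correct friable normalisation. The trivial majorisation $|E_k(x,y;\vth)| \leq f_k(\vth) := \sum_{n \leq x}\e(n^k\vth)$ combined with the classical mean value $\int |f_k|^{p_0} \dd\vth \ll x^{p_0-k}$ (Hua's inequality for $k=2$, Vaughan for $k=3$, and Wooley's efficient congruencing or the Bourgain--Demeter--Guth decoupling inequality for large $k$) is too weak by the factor $(x/\Psi(x,y))^{p_0}$. Following Harper, we would instead decompose $n = p\ell$ by its largest prime factor to exploit the bilinear structure $E_k(\vth) = \sum_{p \leq y}\sum_{\ell} \e(p^k\ell^k\vth)$, combine this with a Hal\'{a}sz--Montgomery large values argument fed by Theorem~\ref{thm:estim-majo}, and thereby reduce to the classical mean value with the correct $\Psi(x,y)^{p_0}/x^k$ normalisation. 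The listed values of $p_0(k)$ track exactly which classical mean value estimate is invoked at the final step, and the smallness condition on $1-\alpha$ is what controls the loss accumulated in passing from integer to friable variables.
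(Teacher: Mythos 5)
Your high-level plan — restriction-theoretic decomposition, major-arc asymptotics, interpolation against a base mean value — is the right family of ideas, but there is a quantitative mismatch at the central step that invalidates the minor-arc interpolation as you have set it up.

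You dissect at $Q_0 = (\log x)^A$ and write
\[
\int_{\minor} |E_k|^p \,\dd\vth \;\leq\; \Big(\sup_{\minor}|E_k|\Big)^{p-p_0}\int_0^1 |E_k|^{p_0}\,\dd\vth .
\]
The only base estimate actually available with the techniques at hand is the \emph{unnormalised} one, $\int_0^1 |E_k|^{p_0}\,\dd\vth \ll x^{p_0-k+\ee}$ (this is the paper's Lemma~\ref{lem:mv-k}, obtained from Hua/Wooley by majorising the friable sum by the complete one). Plugging this in, the right-hand side becomes $\ll \Psi(x,y)^{p-p_0}\,Q_0^{-c(p-p_0)}\,x^{p_0-k+\ee}$, and comparing with the target $\Psi(x,y)^p x^{-k}$ you need to save a factor of $(x/\Psi(x,y))^{p_0}\,x^{\ee}$. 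Since $x/\Psi(x,y)\asymp x^{1-\alpha+o(1)}$ is a genuine power of $x$ whenever $\alpha<1$, a polylogarithmic saving $Q_0^{-c(p-p_0)}$ from Theorem~\ref{thm:estim-minor} cannot absorb it, no matter how large you take $A$. Your alternative, to prove the endpoint $\int_0^1|E_k|^{p_0}\,\dd\vth \ll \Psi(x,y)^{p_0}x^{-k}$ directly, is strictly stronger than the theorem being asked (which requires $p>p_0$, with the smallness of $1-\alpha$ coupled to $p-p_0$); the paper does not prove this endpoint, and the machinery does not obviously give it.

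The paper's fix is to decompose not into Hardy–Littlewood arcs but by the \emph{size of} $|E_k|$ at the threshold $x^{-c}\Psi(x,y)$, which is a genuine power of $x$. On the small-value set one writes $\int |E_k|^{2s}\leq (x^{-c}\Psi)^{2s-p}\int|E_k|^{p}$ and uses the unnormalised Lemma~\ref{lem:mv-k}; the factor $x^{-c(2s-p)}$ is a power of $x$ and absorbs both $x^{\ee}$ and $x^{p(1-\alpha)}$ provided $1-\alpha\leq [c(2s-p)-\ee]/p$, which is exactly the hypothesis on $1-\alpha$. On the large-value set one needs the restriction estimate, Proposition~\ref{thm:restriction}, built from the large-values bound Proposition~\ref{prop:mv} and Bourgain's well-spaced-points lemma, Lemma~\ref{lem:restriction}. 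Also, a technical point on your bilinear step: the paper does not factor by the largest prime factor $n=p\ell$; it factors $n=mn'$ with $m$ a friable divisor of prescribed size (built from the \emph{smallest} prime factors), so that after expanding the square and invoking positivity the sum over $n'$ can be extended to all integers $\leq x/m$, which is what licenses the use of \emph{complete} Weyl/major-arc bounds (Lemma~\ref{lemme:Weyl-qpetit}, Lemma~\ref{lem:IM-major}) rather than another friable one. Decomposing by largest prime factor keeps both factors friable and does not unlock those complete-sum inputs.
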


Conjecturally, the choice~$p_0(k)=2k$ should be admissible. The admissible choices of~$p_0(k)$ for~$k=3$ and for large~$k$ in the statement above are essentially the same as the best known thresholds for the corresponding problem with mildly friable numbers. This ultimately allows us to prove Corollary~\ref{coro:principal-short} with essentially the same number of variables as in previous works for mildly friable numbers.

\subsection{Application to Waring's problem}

For readers familiar with the circle method, it is a rather routine matter to deduce from the estimates above the following theorem, of which Corollary~\ref{coro:principal-short} is an immediate consequence. This deduction will be carried out in Section~\ref{sec:waring}.

\begin{theorem}\label{thm:waringk}
Fix a positive integer~$k$. There exists~$s_0 = s_0(k)$ such that the following statement holds for all positive integers~$s \geq s_0$. Let~$N$ be a large positive integer, let~$x = N^{1/k}$, and let~$2 \leq y \leq x$. Then the number of ways to write 
\[ N=n_1^k+\cdots+n_s^k \]
with each~$n_j\in S(x,y)$ is
\[ x^{-k} \Psi(x,y)^s \left( \beta_{\infty} \prod_p \beta_p + O_s(u_y^{-1}) \right), \]
where~$u_y$ is defined in~\eqref{eq:notations-uy}, provided that~$y \geq (\log x)^{C}$ for some sufficiently large~$C = C(k) > 0$. Here the archimedean factor~$\beta_{\infty}$ and the local factors~$\beta_p$ are defined in~\eqref{eq:arch-factor} and~\eqref{eq:local-factor} below, respectively.
Moreover, we may take~$s_0(1)=3$, $s_0(2)=5$, and $s_0(3) = 9$. If~$y \leq x^c$ for some sufficiently small~$c = c(k) > 0$, then we may take~$s_0(3)=8$ and~$s_0(k)=k(\log k + \log\log k + 2 + O(\log\log k / \log k))$ for large~$k$.
\end{theorem}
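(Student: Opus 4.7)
The plan is to implement the Hardy--Littlewood circle method. Starting from the representation
\[ R_s(N) \;=\; \int_0^1 E_k(x,y;\vth)^s \, \e(-N\vth)\, \dd\vth, \]
where $R_s(N)$ denotes the number of representations in question, I would partition $[0,1]$ into major and minor arcs as
\[ \major \;=\; \bigsqcup_{q \leq Q_0} \bigsqcup_{\substack{0 \leq a < q \\ (a,q)=1}} \bigl[\tfrac{a}{q} - \tfrac{Q_0}{x^k},\, \tfrac{a}{q} + \tfrac{Q_0}{x^k}\bigr], \qquad \minor \;=\; [0,1) \setminus \major, \]
with $Q_0 = (\log x)^B$ for a constant $B = B(k,s)$ to be chosen large but fixed. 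These arcs are disjoint for $x$ sufficiently large since any two distinct fractions with denominators at most $Q_0$ differ by at least $Q_0^{-2}$.

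On $\major$, for $\vth = a/q + \delta \in \major$ one has $\CQ \leq Q_0^2 \leq (\log x)^{2B}$, so Theorem~\ref{thm:estim-majo} (applied with $A = 2B$) yields, under the hypothesis $y \geq (\log x)^{C(k)}$ with $C$ large,
\[ E_k(x,y;\vth) \;=\; \Psi(x,y)\, \cPhi(\delta x^k,\alpha)\, H_{a/q}(\alpha) \;+\; O\bigl(\Psi(x,y)\, \CQ^{-1/k+2(1-\alpha)+\ee}\, u_y^{-1}\bigr). \]
Raising to the $s$-th power and using $|\cPhi|,|H_{a/q}|\leq 1$ to control the remaining factors in the error term, then integrating over $\major$, the principal term equals
\[ x^{-k}\, \Psi(x,y)^s \cdot \mathfrak{S}_y(N,\alpha)\cdot \mathfrak{J}(N/x^k,\alpha), \]
where $\mathfrak{J} := \int_{\R} \cPhi(\eta,\alpha)^s \e(-\eta N/x^k)\,\dd\eta$ and $\mathfrak{S}_y := \sum_{q\geq 1}\sum_{(a,q)=1} H_{a/q}(\alpha)^s\, \e(-Na/q)$, after extending the $\delta$-integral to all of $\R$ and the $q$-sum to infinity. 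The singular integral $\mathfrak{J}$ will be the archimedean density $\beta_\infty$, while $\mathfrak{S}_y$ factors as the Euler product $\prod_p \beta_p$ by the multiplicativity (via CRT) of the Gauss-sum/divisor structure built into \eqref{eq:def-Hq}. Convergence will follow once $s$ exceeds $2k$, from the decay estimates $\cPhi(\eta,\alpha) \ll (1+|\eta|)^{-1/k}$ and $H_{a/q}(\alpha) \ll q^{-1/k+\ee}$, both uniform for $\alpha$ close to $1$. The approximation error contributes $O(\Psi(x,y)^s x^{-k}\, Q_0^{O(1-\alpha)+\ee}\, u_y^{-1})$, which is absorbed into the claimed $u_y^{-1}$ error term by making $C$ large enough that $(1-\alpha)\log Q_0$ is tiny.

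On $\minor$, every $\vth \in \minor$ admits by Dirichlet a rational approximation $|\vth - a'/q'| \leq 1/(q' Q_0)$ with $Q_0 < q' \leq x^k/Q_0$, and Theorem~\ref{thm:estim-minor} delivers the pointwise bound $E_k(x,y;\vth) \ll \Psi(x,y)\, Q_0^{-c}$ for some $c = c(k) > 0$. Combining this with Theorem~\ref{cor:mv-k} at an exponent $p$ with $p_0(k) < p < s$, H\"older's inequality gives
\[ \int_\minor |E_k(x,y;\vth)|^s\, \dd\vth \;\leq\; \bigl(\sup_{\vth \in \minor} |E_k|\bigr)^{s-p} \int_0^1 |E_k|^p\, \dd\vth \;\ll\; \Psi(x,y)^s\, x^{-k}\, (\log x)^{-Bc(s-p)}, \]
which is $o(\Psi(x,y)^s x^{-k} u_y^{-1})$ once $B = B(k,s)$ is large enough. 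This step demands $s > p_0(k)$; the stated thresholds $s_0(k)$ in the theorem are precisely the smallest integers strictly exceeding the corresponding $p_0(k)$ from Theorem~\ref{cor:mv-k}, giving $s_0(1) = 3$, $s_0(2) = 5$, $s_0(3) \in \{8,9\}$ depending on the regime for $y$, and the claimed asymptotic for large $k$.

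The main obstacle I foresee is the major-arc bookkeeping in two related places. First, one must identify $\lim \mathfrak{S}_y$ with the genuine Euler product $\prod_p \beta_p$ of local densities counting solutions modulo $p^r$, since $H_{a/q}(\alpha)$ differs from the classical normalized Gauss-sum expression by both the friability truncation $P(d_1 d_2)\leq y$ and the $\alpha$-weighting on each divisor; verifying this identification requires an explicit computation at each prime together with a comparison with the local count of $k$-th powers in $S(x,y)$. Second, controlling the truncations in $q$ and $\delta$ demands that the decay bounds on $\cPhi$ and $H_{a/q}$ remain uniform as $\alpha \to 1^-$ (equivalently as $\kappa$ grows), so that neither the singular series nor the singular integral blows up. The minor-arc contribution, by contrast, follows essentially mechanically from Theorems~\ref{thm:estim-minor} and~\ref{cor:mv-k} once the arc decomposition is set up.
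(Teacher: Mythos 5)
Your plan is the same circle-method decomposition the paper uses (arcs of width $Q_0/x^k$ around rationals with $q\le Q_0=(\log x)^B$, Theorem~\ref{thm:estim-majo} on the major arcs, Proposition~\ref{prop:estim-minor}/Theorem~\ref{thm:estim-minor} plus the mean value estimate Theorem~\ref{cor:mv-k} via an $L^\infty$--$L^{p}$ H\"older split on the minor arcs, then completion of the singular integral and series, with the Euler-product identification and convergence delegated to a separate local analysis), and the thresholds $s_0(k)$ are read off in the same way.

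The one place where your bookkeeping is off is the major-arc error term: bounding the main factors $\cPhi(\delta x^k,\alpha)\,H_{a/q}(\alpha)$ merely by $1$ inside the binomial expansion of $\bigl(\mathrm{main}+\mathrm{error}\bigr)^s$ does not allow the sum over the arcs to converge. With that crude bound each cross term is only $\ll u_y^{-1}\CQ^{-1/k+2(1-\alpha)+\ee}$, and integrating over $\delta$ then summing over $0\le a<q\le Q_0$ produces a factor $\sum_{q\le Q_0}\vphi(q)\,q^{-1/k+\cdots}\cdot(Q_0/q)^{1-1/k+\cdots}\asymp Q_0^{2-1/k+\cdots}$, a genuinely positive power of $Q_0=(\log x)^B$ which is not $O(u_y^{-1})$. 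The fix, as in the paper's proof of Lemma~\ref{lem:waring-major}, is to use the decay $\cPhi(\delta x^k,\alpha)H_{a/q}(\alpha)\ll\CQ^{-1/k+(1-\alpha)+\ee}$ (Lemmas~\ref{lem:phi-check} and~\ref{lem:Haq-bound}) on \emph{every} factor, so that each cross term is $\ll u_y^{-1}\CQ^{-s/k+2s(1-\alpha)+\ee}$; once $s/k-2s(1-\alpha)-\ee>2$ (which holds for $s\ge s_0(k)$ and $1-\alpha$ small) the arc integrals and the $q$-sum both converge and the total major-arc error is $O\bigl(u_y^{-1}x^{-k}\bigr)$ plus the tail truncation errors $O\bigl(x^{-k}Q_0^{2-s\alpha/k+\ee}\bigr)$, which vanish into $u_y^{-1}$ once $B$ is large. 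Everything else you wrote, including the identification $\mathfrak{S}_y=\prod_p\beta_p$ via multiplicativity of $H_{a/q}$ (the paper's appendix Lemmas~\ref{lem:S=H(a/q)}--\ref{lem:Sqa}) and the Dirichlet-approximation justification that minor-arc points have $q(1+|\delta|x^k)\ge Q_0$, matches the paper's argument.
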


By Propositions~\ref{prop:beta-infty} and~\ref{prop:beta-p}, both~$\beta_{\infty}$ and~$\prod_p \beta_p$ are positive with the given choices of~$s_0(k)$ and the assumption on~$y$. Thus Corollary~\ref{coro:principal-short} indeed follows.

Our technique (in particular Proposition~\ref{thm:restriction} below), combined with estimates in~\cite{Woo95b}, allows to show that every large positive integer is the sum of six friable cubes and one unrestrained cube. In the mildly friable case, this was observed by Kawada~\cite{Kaw05}. We will not give the details here.

\subsection*{Notations}

We use the following standard notations. For~$2 \leq y \leq x$, we write
\begin{equation}\label{eq:notations-uy} 
u := (\log x)/\log y, \quad \frac1{u_y} := \min\Big\{\frac1u, \frac{\log(1+u)}{\log y}\Big\},  \quad H(u) := \exp\Big\{\frac{u}{(\log(u+1))^2}\Big\}.
\end{equation}
We will also denote
\begin{equation}
Y := \min\{y, \e^{\sqrt{\log x}}\}, \qquad \CY_\ee := \e^{(\log y)^{3/5-\ee}}, \qquad \CT_\ee := \min\{\e^{(\log y)^{3/2-\ee}}, H(u)\}.\label{eq:notations}
\end{equation}
Throughout we fix a positive integer~$k$, and all implied constants are allowed to depend on~$k$. We will always write~$\alpha = \alpha(x,y)$, and will frequently assume that~$1 - \alpha$ is sufficiently small, or equivalently~$y \geq (\log x)^C$ for some sufficiently large~$C$.

\section{Lemmata}

\subsection{Friable numbers}

We recall the definition~\eqref{eq:def-alpha} of the saddle-point~$\alpha(x, y)$. It is the positive real saddle point of the associated Mellin transform~$x^s \zeta(s, y)$, where
$$ \zeta(s, y) := \sum_{P(n)\leq y} n^{-s} = \prod_{p\leq y}(1-p^{-s})^{-1} \qquad (\Re s > 0). $$
Let
$$ \sigma_2(\alpha, y) := - \frac{\dd}{\dd\alpha} \sum_{p \leq y} \frac{\log p}{p^{\alpha}-1} =  \sum_{p\leq y}\frac{(\log p)^2p^\alpha}{(p^\alpha-1)^2}. $$
Then from Hildebrand--Tenenbaum~\cite{HT}, we have the uniform estimate
\begin{equation}\label{eq:HT-psixy} 
\Psi(x, y) = \frac{x^\alpha \zeta(\alpha, y)}{\alpha\sqrt{2\pi \sigma_2(\alpha, y)}}\Big\{1 + O\Big(\frac1u + \frac{\log y}y\Big)\Big\} \qquad (2\leq y\leq x). 
\end{equation}
Note that for~$y \gg \log x$ we have
\begin{equation}\label{eq:sigma2} 
\sigma_2(\alpha, y) \asymp (\log x)\log y.
\end{equation} 
The saddle-point~$\alpha$ belongs to the interval~$(0, 1)$ for large enough~$x$ (independently of~$y$ with~$2\leq y\leq x$). We have
\begin{equation}
1-\alpha = \frac{\log (u \log (u+1))}{\log y} + O\left( \frac{1}{\log y} \right) \qquad (\log x \leq y \leq x). \label{eq:alpha-1}
\end{equation}

\subsection{Friable character sums}
\label{sec:lemmes-friables-carac}

In this section, we regroup facts about the character sums
$$ \Psi(x, y ; \chi) := \sum_{n\in S(x, y)} \chi(n), $$
where~$\chi$ is a Dirichlet character. We quote the best known results from work of Harper~\cite{HarperBV}. For some absolute constants~$K, c>0$, with~$K$ large and~$c$ small, the following is true. Assume that
$$ 3\leq (\log x)^K \leq y \leq x .$$

We recall the notations~\eqref{eq:notations}. Proposition~3 of \cite{HarperBV} implies that the bound
\begin{equation}
\Psi(x, y ; \chi) \ll \Psi(x, y) Y^{-c}\label{eq:borne-carac-normal}
\end{equation}
holds for any Dirichlet character~$\chi$ of modulus less than~$x$, of conductor less than~$Y^c$, and whose Dirichlet~$L$-function has no zero in the interval~$[1-K/\log Y, 1]$.

Secondly, among all primitive Dirichlet character~$\chi$ of conductor at most~$Y^c$, there is at most one which does not satisfy the above bound. If such a character~$\chi_1$ exists and has conductor~$q_1$, say, then any character~$\chi$ induced by~$\chi_1$ and of modulus~$q\leq x$ satisfies
\begin{equation}
\Psi(x, y ; \chi) \ll \Psi(x, y)\frac{\log q_1}{\log x}\Big(\sum_{d|(q/q_1)}d^{-\alpha}\Big) \big\{y^{-c} + H(u)^{-c}\big\}.\label{eq:borne-carac-siegel}
\end{equation}
This is deduced from the computations in~\cite[\S{}3]{HarperBV} (see in particular the first formula on page 16, and the last formula on page 17).

\subsection{Higher order Gauss sums}

Important for our study will be the following generalisation of Gauss sums. Given the integers~$k\geq 1$,~$q\geq 1$, a residue class~$a\mod{q}$ and a character~$\chi\mod{q}$, we let
\[ G_k(q, a, \chi) := \sum_{b\mods{q}}\chi(b)\e\Big(\frac{ab^k}q\Big). \]
We have the following bound.
\begin{lemma}\label{lemme:gauss}
Suppose~$q, a, a'$ are positive integers, and~$\chi$ is a character modulo~$q$. Suppose~$(a', q)=1$, and let~$q^*|q$ denote the conductor of~$\chi$. Then
\[ |G_k(q, aa', \chi)| \leq 2k^{\omega(q)}\tau(q)\min\{q/\sqrt{q^*}, \sqrt{aq}\}. \]
\end{lemma}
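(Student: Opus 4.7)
The strategy is multiplicative: reduce to prime-power moduli via the Chinese Remainder Theorem, then bound each prime-power Gauss sum by exploiting the interplay between the $p$-adic valuation $v_p(a)$ and the conductor $p^{e^*}$ of $\chi$. Factoring $q=\prod_p p^{e_p}$ and $\chi=\prod_p \chi_p$ correspondingly, CRT gives
\[
G_k(q, aa', \chi) = \prod_p G_k\bigl(p^{e_p},\, aa'\,m_p,\, \chi_p\bigr),
\]
with each $m_p$ coprime to $p$. Since $\omega(q)$, $\tau(q)$, $q/\sqrt{q^*}$ and (after identifying the $q$-part of $a$) $\sqrt{aq}$ all decompose multiplicatively across primes, matters reduce to a per-prime estimate of the shape
\[
|G_k(p^e, am, \chi_p)| \leq C\, k(e+1)\,\min\bigl\{p^{e-e^*/2},\ p^{(e+v_p(a))/2}\bigr\}
\]
for $(m,p)=1$ and some absolute constant $C$.

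For the prime-power sum, write $a = p^j a_0$ with $(a_0,p)=1$ and $0\leq j\leq e$. The key identity is that
\[
a\bigl(b(1+cp^{e-j})\bigr)^k \equiv ab^k \pmod{p^e}
\qquad \bigl(b\in(\Z/p^e\Z)^\times,\, c\in\Z\bigr),
\]
because each correction term in the expansion of $(1+cp^{e-j})^k$ carries a factor $p^{e-j}$, which when multiplied by $a=p^j a_0$ has $p$-adic valuation $\geq e$. Hence $\e(amb^k/p^e)$ is invariant under the subgroup $H=\{1+cp^{e-j}: c\bmod p^j\}$, which coincides with $\ker\bigl((\Z/p^e\Z)^\times\to(\Z/p^{e-j}\Z)^\times\bigr)$. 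Partitioning into $H$-cosets and invoking character orthogonality on $H$ yields the dichotomy
\[
G_k(p^e, am, \chi_p) =
\begin{cases}
p^j\, G_k(p^{e-j}, a_0 m, \chi_p'), & e^*\leq e-j,\\
0, & e^* > e-j,
\end{cases}
\]
where $\chi_p'$ is the character modulo $p^{e-j}$ induced by the primitive character underlying $\chi_p$.

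When $\chi_p'$ is primitive on $p^{e-j}$ (i.e.\ $e^*=e-j$), the Weil bound for $k$-th power Gauss sums --- classical for $e-j=1$ and provable by $p$-adic stationary phase for $e-j\geq 2$ --- gives $|G_k(p^{e-j},a_0 m,\chi_p')|\leq k\,p^{(e-j)/2}$, hence $|G_k(p^e, am, \chi_p)|\leq k\,p^{(e+j)/2}$, which simultaneously matches both entries of the minimum. When $\chi_p'$ remains non-primitive, one iterates by parametrizing $b=b_1+rp^{e^*}$: the sum over $r$ either vanishes because of the divisibility condition $p^{e-j-e^*}\nmid k$ (automatic when $p\nmid k$) or reduces to a polynomial Weil-type sum of bounded degree, in either case treated with a further factor $k(e+1)$. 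Re-collecting the prime contributions produces the global constant $2k^{\omega(q)}\tau(q)$, the $2$ absorbing mild pathologies at $p=2$ where $(\Z/2^e\Z)^\times$ is not cyclic. The principal obstacle is ensuring the $\sqrt{aq}$ branch of the minimum survives the non-primitive case, since naive iteration only recovers $q/\sqrt{q^*}$; this is reconciled by observing that $\sqrt{aq}<q/\sqrt{q^*}$ forces $a<q/q^*$, which per prime means $v_p(a)\leq e-e^*$ and therefore places us in the primitive sub-case, where Weil's bound directly delivers the correct exponent $(e+j)/2$.
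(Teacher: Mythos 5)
The paper's proof takes a different and cleaner route: by orthogonality of multiplicative characters it rewrites $G_k(q,aa',\chi)$ as a sum, over the at most $2k^{\omega(q)}$ characters $\tchi\bmod q$ with $\tchi^k\chi=\chi_0$, of ordinary \emph{linear} Gauss sums $\sum_c \e(aa'c/q)\overline{\tchi(c)}$, and then applies the classical bound \cite[Lemma 3.2]{IK} to each one. This completely avoids the delicate $p$-adic analysis of $k$-th power phases, and the factor $\min\{q/\sqrt{q^*},\sqrt{aq}\}$ emerges directly from the constraint that the conductor of $\tchi$ is a multiple of $q^*$. Your CRT-plus-$p$-adic-stationary-phase approach is a legitimate alternative strategy, but as written it has two concrete gaps.

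First, after the initial reduction $G_k(p^e,am,\chi_p)=p^{j}G_k(p^{e-j},a_0m,\chi_p')$ with $j=v_p(a)$, the remaining character $\chi_p'$ is primitive modulo $p^{e-j}$ only when $e^*=e-j$, i.e.\ $v_p(a)=e-e^*$ exactly. When $v_p(a)<e-e^*$ you are in the non-primitive case; the inner $a_0$ is now coprime to $p$, so the subgroup trick cannot be iterated and you are stuck with a genuinely non-primitive sum. You correctly observe that this sum vanishes when $p\nmid k$, but for $p\mid k$ you only assert it ``reduces to a polynomial Weil-type sum of bounded degree'' --- yet what is needed there is the bound $|G_k(p^{e-j},a_0m,\chi_p')|\ll p^{(e-j)/2}$, i.e.\ square-root cancellation with $v_p(a_0)=0$, and this is not justified (and is not a formal consequence of the primitive-case Weil bound).

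Second, the concluding reconciliation argument is incorrect: the inequality $a<q/q^*$ does \emph{not} imply $v_p(a)\leq e_p-e_p^*$ at each prime $p\mid q$. For instance, take $q=2^2\cdot 101^2$ and $\chi$ of conductor $q^*=2\cdot 101$, so $q/q^*=202$; then $a=8<202$ yet $v_2(a)=3>1=e_2-e_2^*$. Even if the valuation inequality did hold, you would need $v_p(a)=e_p-e_p^*$ (not merely $\leq$) to land in the primitive sub-case at the prime $p$. So the non-primitive branch cannot be dismissed this way, and the $\sqrt{aq}$ side of the minimum remains unproved in your argument when $p\mid k$.
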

\begin{proof}
By using orthogonality of additive and multiplicative characters modulo~$q$, it is easily seen that
\begin{align*}
\sum_{b\mods{q}}\chi(b)\e\Big(\frac{aa'b^k}{q}\Big) = \ssum{\tchi\mod{q}\\\tchi^k\chi=\chi_0}\sum_{c\mods{q}}\e\Big(\frac{aa'c}q\Big)\overline{\tchi(c)}.
\end{align*}
For each~$\tchi$ in the above, the inner sum over~$c$ is a Gauss sum, so that by \textit{e.g.}~\cite[Lemma~3.2]{IK},
\[ \Bigg|\sum_{c\mods{q}}\e\Big(\frac{aa'c}q\Big)\overline{\tchi(c)}\Bigg| \leq \sqrt{q'}\sum_{d|(q/q', a)}d \]
where~$q'|q$ is the conductor of~$\tchi$. Here we used our assumption that~$(a', q)=1$. The fact that~$\tchi^k\chi=\chi_0$ imposes that~$q^*|q'$. Writing~$q'=rq^*$, we have~$r|q/q^*$ and so
\[ |G_k(q, a a', \chi)| \leq \big|\{\tchi\mod{q} : \ \tchi^k\chi = \chi_0\}\big| \big(\sup_{r|q/q^*}\sqrt{rq^*}\sum_{d|(q/(rq^*), a)}d\big). \]
The sum over~$d$ has at most~$\tau(q)$ terms, and so we trivially have
\[ \sup_{r|q/q^*}\sqrt{rq^*}\sum_{d|(q/(rq^*), a)}d \leq \tau(q)\sqrt{q^*}\sup_{r\in[1, q/q^*]}\min\{q/(q^*\sqrt{r}), a\sqrt{r}\}. \]
The supremum over~$r$ evaluates to~$\min\{q/q^*, \sqrt{aq/q^*}\}$. Therefore,
\[ \sup_{r|q/q^*} \sqrt{rq^*}\sum_{d|(q/(rq^*), a)}d \leq \tau(q)\min\{q/\sqrt{q^*}, \sqrt{aq}\}. \]
To conclude it suffices to show that there are at most~$2k^{\omega(q)}$ characters~$\tchi$ satisfying~$\tchi^k\chi = \chi_0$. By the Chinese remainder theorem, the group of characters of~$(\Z/q\Z)^\times$ is isomorphic to a product of~$\omega(q)$ cyclic groups (where~$\omega(q)$ is the number of distinct prime factors of~$q$), and possibly~$\{\pm1\}$. Therefore, the number of characters~$\tchi\mod{q}$ satisfying~$\tchi^k\chi=\chi_0$ is at most~$2k^{\omega(q)}$. This yields our lemma.
\end{proof}

\subsection{Friable numbers in short intervals}

We will need the following two upper bounds on the number of~$y$-friable numbers in short intervals. These upper bounds are almost sharp for a very wide range of~$y$ and the length of the short intervals.

\begin{lemma}\label{lem:adam-smooth-2}
For any~$2 \leq y \leq x$ and~$d \geq 1$, we have
\[ \Psi(x/d,y) \ll d^{-\alpha(x, y)} \Psi(x,y). \]
\end{lemma}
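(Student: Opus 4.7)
The plan is to combine Rankin's inequality with the Hildebrand--Tenenbaum asymptotic~\eqref{eq:HT-psixy}, applied with a carefully chosen exponent. Direct Rankin with the exponent $\alpha:=\alpha(x,y)$ yields $\Psi(x/d,y)\leq d^{-\alpha}x^{\alpha}\zeta(\alpha,y)$; combining this with~\eqref{eq:HT-psixy} only gives $\Psi(x/d,y)\ll d^{-\alpha}\sqrt{\sigma_2(\alpha,y)}\,\Psi(x,y)$, which is off by an unacceptable factor of $\sqrt{\sigma_2}$. The refinement I would pursue is to take the Rankin exponent to be the shifted point $\sigma:=\alpha+(\log d)/\sigma_2(\alpha,y)$, which by implicit differentiation of the saddle-point equation~\eqref{eq:def-alpha} is essentially $\alpha(x/d,y)$ to leading order.

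Writing $F(s):=s\log x+\log\zeta(s,y)$, one has $F'(\alpha)=0$ and $F''(\alpha)=\sigma_2(\alpha,y)$ by definition of $\alpha$, so a second-order Taylor expansion gives
\[
\Psi(x/d,y) \;\leq\; (x/d)^{\sigma}\zeta(\sigma,y) \;=\; d^{-\alpha}\,x^{\alpha}\zeta(\alpha,y)\,\exp\!\Bigl( -\tfrac{(\log d)^2}{2\sigma_2(\alpha,y)} + O(|\sigma-\alpha|^3\,\sigma_3(\alpha,y))\Bigr),
\]
where $\sigma_3(\alpha,y):=|F'''(\alpha)|$. Invoking~\eqref{eq:HT-psixy} for $\Psi(x,y)$ then produces
\[
\Psi(x/d,y) \;\ll\; d^{-\alpha}\,\Psi(x,y)\,\sqrt{\sigma_2(\alpha,y)}\,\exp\!\Bigl(-\tfrac{(\log d)^2}{2\sigma_2(\alpha,y)}\Bigr),
\]
which already gives the bound $\Psi(x/d,y)\ll d^{-\alpha}\Psi(x,y)$ in the regime $(\log d)^2\gtrsim\sigma_2\log\sigma_2$, where the exponential factor absorbs $\sqrt{\sigma_2}$.

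In the complementary regime $(\log d)^2\ll\sigma_2\log\sigma_2$, I would instead apply~\eqref{eq:HT-psixy} directly to $\Psi(x/d,y)$ with saddle point $\alpha':=\alpha(x/d,y)$. In this range $\alpha'-\alpha\asymp(\log d)/\sigma_2$ is small, and using the estimate $|\mathrm{d}\sigma_2/\mathrm{d}\alpha|\ll\sigma_2\log y$ one obtains $\sigma_2(\alpha',y)\asymp\sigma_2(\alpha,y)$; combined with the minimizing property $(x/d)^{\alpha'}\zeta(\alpha',y)\leq (x/d)^{\alpha}\zeta(\alpha,y)$, dividing the two asymptotic formulas yields the desired bound. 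The main obstacle I anticipate is a careful bookkeeping of error terms: ensuring that the cubic remainder in the Taylor expansion of $F$ stays dominated by the quadratic term across the full admissible range, and handling boundary situations where either~\eqref{eq:HT-psixy} is weak (very small $y$) or $x/d$ becomes small (very large $d$). In the latter situation one falls back on the trivial estimate $\Psi(x/d,y)\leq x/d$ together with the lower bound $\Psi(x,y)\gg x^\alpha$ extracted from~\eqref{eq:HT-psixy} via the Mertens-type expansion $\log\zeta(\alpha,y)=\sum_{p\leq y}\sum_{k\geq 1}p^{-k\alpha}/k\gg\log\sqrt{\sigma_2(\alpha,y)}$, which closes the argument at the boundary.
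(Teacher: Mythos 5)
The paper does not give a proof here: it simply cites~\cite[Theorem~2.4]{BT2005}. Your proposal attempts to reconstruct the argument, and the core idea --- apply Rankin's inequality to $\Psi(x/d,y)$ with the exponent shifted from $\alpha(x,y)$ toward $\alpha(x/d,y)$, then compare with the Hildebrand--Tenenbaum formula~\eqref{eq:HT-psixy} --- is indeed the right one and matches what the cited reference does.

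One structural simplification worth flagging: you do not need the cubic remainder $O(|\sigma-\alpha|^3\sigma_3)$, which forces you to worry about whether the cubic term is dominated uniformly. Since $F(s):=s\log x+\log\zeta(s,y)$ has $F''(s)=\sigma_2(s,y)$ a \emph{decreasing} function of $s$ (each summand $(\log p)^2 p^s/(p^s-1)^2$ is decreasing), Taylor with Lagrange remainder gives for any $\sigma>\alpha$ the clean one-sided bound $F(\sigma)\leq F(\alpha)+\tfrac12\sigma_2(\alpha,y)(\sigma-\alpha)^2$, with no cubic correction at all. Plugging in $\sigma=\alpha+(\log d)/\sigma_2$ then directly yields $\Psi(x/d,y)\leq d^{-\alpha}x^\alpha\zeta(\alpha,y)\exp(-(\log d)^2/2\sigma_2)$ for every $d\geq 1$, and your first regime goes through without remainder bookkeeping. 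For the second regime, note that the key smallness you need is $(\alpha'-\alpha)\log y\ll 1$ (to conclude $\sigma_2(\alpha',y)\asymp\sigma_2(\alpha,y)$), and this follows since $(\alpha'-\alpha)\log y\asymp(\log d)(\log y)/\sigma_2\asymp(\log d)/\log x\leq 1$ by~\eqref{eq:sigma2}; it does not actually rely on the regime-2 hypothesis. The genuine boundary issue you correctly identify is $x/d<y$, where~\eqref{eq:HT-psixy} cannot be applied to $\Psi(x/d,y)$; there the trivial bound $\Psi(x/d,y)\leq x/d\leq x d^{-\alpha}$ suffices once one knows $\Psi(x,y)\gg x$ whenever $y>x/d$ (which forces $y$ to be a power of $x$). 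Overall the proposal is sound in its plan, though the written version leans on an unnecessary and somewhat delicate third-order expansion that the concavity of $F''$ lets you sidestep entirely.
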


\begin{proof}
See~\cite[Theorem 2.4]{BT2005}.
\end{proof}

\begin{lemma}\label{lem:adam-smooth-3}
Let~$\log x \leq y \leq x$ be large. For any arithmetic progression~$I \subset [x,2x] \cap \Z$, we have
\[ | \{n \in I: P^+(n) \leq y \} | \ll |I|^{\alpha} \frac{\Psi(x,y)}{x^{\alpha}} \log x, \]
where~$\alpha = \alpha(x,y)$.
\end{lemma}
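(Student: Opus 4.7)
The plan is to use the divisor-gap property of $y$-friable integers: if $n$ is $y$-friable, then any two consecutive divisors of $n$ differ by a factor at most $P^+(n)\le y$, so for each $L\in[1,n]$ there is a $y$-friable divisor $d\in(L/y,L]$ of $n$.

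Write $L=|I|$ and express $I=\{a+jq : 0\le j<L\}$ with $Lq\le 2x$. Each $y$-friable $n\in I$ has $n\ge x\ge L$, hence admits a $y$-friable divisor $d\in(L/y,L]$. Writing $n=dm$, the cofactor $m$ lies in $[x/d,2x/d]$, is itself $y$-friable, and the condition $dm\in I$ forces $m$ into an arithmetic progression of common difference $q/(d,q)$. Swapping the order of summation gives
\[
\bigl|\{n\in I : P^+(n)\le y\}\bigr| \;\le\; \sum_{\substack{L/y < d \le L \\ P^+(d)\le y}} N(d), \quad N(d) := \#\bigl\{m : dm\in I,\ P^+(m)\le y\bigr\}.
\]

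For each $d$, I would bound $N(d)$ by the minimum of two complementary quantities: the length of the relevant AP, giving $N(d)\ll 1 + x(d,q)/(dq)$; and the global friable count on $[x/d,2x/d]$, which by Lemma~\ref{lem:adam-smooth-2} is at most $O(d^{-\alpha}\Psi(x,y))$. The critical outer estimate is a Rankin-type bound: applying Lemma~\ref{lem:adam-smooth-2} at scale $L$ in place of $x/d$ gives $\Psi(L,y)\ll L^\alpha\Psi(x,y)/x^\alpha$, which via partial summation converts into
\[
\sum_{\substack{d\le L \\ P^+(d)\le y}} d^{-\alpha} \;\ll\; \frac{\Psi(x,y)}{x^\alpha}\log L.
\]

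Combining this with the two bounds on $N(d)$ and splitting the sum dyadically around a threshold $d^{\ast}$ at which the two bounds on $N(d)$ coincide, I expect the total contribution to be $\ll L^\alpha \Psi(x,y)\, x^{-\alpha}\log x$; the constraint $Lq\le 2x$ is what reconciles the $x/(dq)$ factor in the AP-length bound with the target factor $|I|^\alpha$. The principal obstacle will be this balance: the switching threshold $d^{\ast}$ depends on both $q$ and $\Psi(x,y)$, and one must check that the ``AP-length'' regime and the ``friable'' regime together produce precisely the factor $|I|^\alpha$ (rather than $|I|$ or $\Psi(|I|,y)$) and the logarithmic factor $\log x$ (rather than $\log y$). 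I expect to exploit the strict inequality $\alpha(L,y)>\alpha(x,y)$ for $L<x$ coming from~\eqref{eq:alpha-1}, which forces the partial-summation integrals underlying the Rankin bound to converge geometrically rather than merely logarithmically.
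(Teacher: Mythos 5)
The paper's own proof of this lemma is a two-liner: for $|I|\geq y$ it directly cites Harper's Smooth Number Result 3, and for $|I|<y$ it uses the trivial bound $|\{n\in I:P^+(n)\leq y\}|\leq|I|$ together with~\eqref{eq:alpha-1}, which gives $|I|^{1-\alpha}\leq y^{1-\alpha}\ll\log x$, while $\Psi(x,y)/x^\alpha\gg 1$ follows from Lemma~\ref{lem:adam-smooth-2} applied at $d=x$. Your plan is instead to reprove the underlying Harper estimate by the classical ``extract a $y$-friable divisor $d\in(L/y,L]$'' device. That is indeed the right engine behind such bounds, but as sketched your argument has genuine gaps.

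The most serious is the choice of AP-length bound. You bound $N(d)$ by the number of residues $m$ in the whole interval $[x/d,2x/d]$ in the relevant class modulo $q/(d,q)$, giving $N(d)\ll 1+x(d,q)/(dq)$. But what one needs is the number of terms of $I$ itself that are divisible by $d$, namely $N(d)\leq L(d,q)/d+1$; the two differ by a factor $x/(Lq)$. The constraint $Lq\leq 2x$ only gives $x/(Lq)\geq 1/2$ --- it bounds your quantity from \emph{below}, not above --- so it does not ``reconcile'' the two as you hope. For a short progression with small modulus ($L\ll x/q$), the term $\sum_{L/y<d\leq L}x/(dq)$ is too large by a factor of order $x/(Lq)$, and the friable-count bound $d^{-\alpha}\Psi(x,y)$ cannot absorb this, since $\Psi(x,y)\gg x^\alpha\geq L^\alpha$; the threshold splitting at $d^*$ will therefore not close. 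Once the sharper $N(d)\leq L(d,q)/d+1$ is used, in the coprime case $(a,q)=1$ your Rankin/partial-summation estimate alone finishes the proof, with no need for a threshold at all, and this treatment also covers the $|I|<y$ regime (where $d=1$ is allowed, $N(1)$ would otherwise be circular, and one really does need $N(1)\leq L+1$). A second gap: for general $I$ the factor $(d,q)$ is not harmless. Carried through the Rankin bound it produces a factor $\sum_{e\mid q,\,P^+(e)\leq y}e^{1-\alpha}$ that can be arbitrarily large when $q$ has many small prime factors dividing $a$. The standard cure is to first divide all of $I$ by $g:=(a,q)$, reducing to $(a,q)=1$ so that $(d,q)=1$ throughout, and then transport the resulting estimate --- phrased with $\Psi(x/g,y)$ and $\alpha(x/g,y)$ --- back to the stated form via Lemma~\ref{lem:adam-smooth-2} and $Lq\leq 2x$. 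None of that bookkeeping appears in your sketch.
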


\begin{proof}
When~$|I| \geq y$, this is Smooth Number Result 3 in~\cite[Section 2.1]{Harper}. When~$|I| \leq y$, we can bound the left side trivially by~$|I|$ and the right side is~$\gg |I|^{\alpha} \log x \gg |I|$ by~\eqref{eq:alpha-1}.
\end{proof}

\subsection{Equidistribution results}

In our proof of the mean value estimates, we will need the following equidistribution-type results. The first is the classical Erd\"{o}s-Tur\'{a}n inequality, connecting equidistribution of points with exponential sums.

\begin{lemma}[Erd\"{o}s-Tur\'{a}n]\label{lem:erdos-turan}
Let~$\vth_1, \ldots, \vth_N \in \R/\Z$ be arbitrary. Then for any interval~$I \subset \R/\Z$ and any positive integer~$J$, we have
\[ \left| \#\{1 \leq n \leq N: \vth_n \in I\} - N \cdot \measure(I) \right| \leq \frac{N}{J+1} + 3 \sum_{j=1}^J \frac{1}{j} \left| \sum_{n=1}^N \e(j \vth_n) \right|. \]
\end{lemma}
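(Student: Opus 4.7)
The plan is to use the standard Fourier-analytic argument based on trigonometric-polynomial majorants and minorants for the indicator function $\chi_I$ of the interval $I$. The key tool is the Beurling--Selberg construction: for any positive integer $J$ and any arc $I \subset \R/\Z$, there exist real trigonometric polynomials
\[
M_J^\pm(x) = c_0^\pm + \sum_{1 \leq |j| \leq J} c_j^\pm \e(jx)
\]
of degree at most $J$, satisfying $M_J^-(x) \leq \chi_I(x) \leq M_J^+(x)$ pointwise on $\R/\Z$, together with the mass bounds $0 \leq c_0^+ - \measure(I) \leq 1/(J+1)$ and $0 \leq \measure(I) - c_0^- \leq 1/(J+1)$, and the Fourier coefficient bound $|c_j^\pm| \leq 1/|j|$ for $1 \leq |j| \leq J$ (up to an absolute constant, made sharp in Vaaler's construction). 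I would quote this from Vaaler's paper on Beurling's extremal function, or from Chapter~1 of Montgomery's \emph{Ten Lectures on the Interface Between Analytic Number Theory and Harmonic Analysis}, rather than reprove it.

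With this in hand the proof is essentially bookkeeping. After reducing to the case $I = [a,b) \subset [0,1)$, I would evaluate $M_J^+$ at $\vth_1,\dots,\vth_N$ and sum, obtaining
\[
\#\{n \leq N : \vth_n \in I\} \;\leq\; \sum_{n=1}^N M_J^+(\vth_n) \;=\; N c_0^+ + \sum_{1 \leq |j| \leq J} c_j^+ \sum_{n=1}^N \e(j\vth_n).
\]
The first term contributes $N \measure(I) + O(N/(J+1))$ by the mass bound. For the second, I would pair $j$ with $-j$ using $c_{-j}^+ = \overline{c_j^+}$ (since $M_J^+$ is real-valued), which collapses the sum to $2\sum_{j=1}^J \Re\bigl(c_j^+ \sum_n \e(j\vth_n)\bigr)$; bounding in absolute values and invoking $|c_j^+| \leq 1/j$ yields a contribution of at most $\sum_{j=1}^J (2/j) |\sum_n \e(j\vth_n)|$. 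A symmetric lower bound follows from the same computation applied to $M_J^-$, and combining the two upper/lower inequalities (with a little slack) produces the claimed bound with the constant~$3$.

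The only substantive ingredient is the Beurling--Selberg construction with the sharp $|c_j^\pm| \leq 1/|j|$ bound, and since this is classical there is no genuine obstacle. Arranging the explicit constant $3$ rather than some larger absolute constant requires a careful choice of polynomial (Vaaler's works) and a little care in combining the real and imaginary parts of the Fourier coefficients with the $\Re$-pairing above; but since the lemma is used downstream only in conjunction with exponential-sum estimates where absolute constants are absorbed into $O(\cdot)$, even a version with an unspecified absolute constant in place of $3$ would suffice for the applications in the paper.
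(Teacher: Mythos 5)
Your proposal is correct and follows exactly the argument in the paper's cited reference (Montgomery, \emph{Ten Lectures}, Corollary~1.1), namely the Beurling--Selberg/Vaaler construction of trigonometric-polynomial majorants and minorants for $\chi_I$; the paper itself simply cites that source and gives no proof. One small imprecision: Vaaler's coefficient bound is $|c_j^\pm| \leq \frac{1}{J+1} + \frac{1}{\pi|j|}$ rather than $1/|j|$, so the pairing of $\pm j$ gives a factor $2(1 + 1/\pi)/j < 3/j$ rather than $2/j$, which is precisely where the constant $3$ comes from -- but since you flag that any absolute constant suffices for the application, this does not affect the substance.
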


\begin{proof}
See~\cite[Corollary 1.1]{ten-lectures}.
\end{proof}

We also need the following result about well spaced points in major arcs, used in the restriction argument of Bourgain~\cite{Bou89} (see also~\cite[Section 2.2]{Harper}). 

\begin{lemma}\label{lem:restriction}
Let~$x$ be large. Let~$Q \geq 1$ and~$1/x \leq \Delta \leq 1/2$ be parameters. For~$\vth \in \R$ define
\[ G_{x,Q,\Delta}(\vth) = \sum_{q \leq Q} \frac{1}{q} \sum_{a=0}^{q-1} \frac{ \1_{\|\vth - a/q\| \leq \Delta} }{1 + x\|\vth - a/q\|}. \]
For any~$\vth_1,\cdots,\vth_R \in \R$ satisfying the spacing conditions~$\| \vth_r - \vth_s\| \geq 1/x$ whenever~$r \neq s$, we have
\[ \sum_{1 \leq r,s \leq R} G_{x,Q,\Delta}(\vth_r - \vth_s) \ll_{\ee, A} \left( RQ^{\varepsilon} + \frac{R^2Q}{x} + \frac{R^2}{Q^A} \right) \log (1+\Delta x), \]
for any~$\varepsilon, A > 0$.
\end{lemma}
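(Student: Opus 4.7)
My plan is to expand $G$ as a Fourier series via Poisson summation over the residues $a \bmod q$, and to bound the resulting quadratic form in the exponential sums $S(m) := \sum_{r} \e(m\vth_r)$ using the large sieve. The well-spacing hypothesis $\|\vth_r - \vth_s\| \ge 1/x$ enters through the large-sieve inequality $\sum_{|m| \le M} |S(m)|^2 \ll (M+x) R$. First, I would separate off the diagonal $r = s$, contributing $R \cdot G(0)$. A direct calculation yields $G(0) \ll \log Q + (Q/x)\log(1+x\Delta)$, which is absorbed by $(R Q^{\ee} + R^2 Q/x)\log(1+x\Delta)$.

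For the off-diagonal, set $H(\vth) := \1_{\|\vth\| \le \Delta}/(1+x\|\vth\|)$, regarded as $1$-periodic with Fourier coefficients $\hat H(m)$. Finite orthogonality modulo $q$ gives $\sum_{a \bmod q} H(\vth - a/q) = q \sum_{n \in \Z} \hat H(nq)\, \e(nq\vth)$, so that
\[
\sum_{r,s=1}^R G(\vth_r - \vth_s) = Q\, \hat H(0)\, R^2 + \sum_{m \ne 0} \tau_Q(m)\, \hat H(m)\, |S(m)|^2,
\]
where $\tau_Q(m) := \#\{q \le Q : q \mid m\}$ and $\hat H(0) = (2/x)\log(1+x\Delta)$. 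The first term equals $(2R^2Q/x)\log(1+x\Delta)$, producing the second term of the claimed bound. For the second sum, the Fourier coefficients only satisfy $|\hat H(m)| \ll \min(\log(1+x\Delta)/x, 1/|m|)$, since $H$ has a cusp at $0$ and jumps at $\pm \Delta$. To control the contribution of $|m|$ past some threshold $\sim Q^B$ (with $B = B(A)$ suitable), I would replace $H$ by a smoothed majorant $H^\sharp$ obtained via convolution with a Gaussian-type bump at scale $\asymp Q^{-B}$, so that $|\hat{H^\sharp}(m)|$ decays superpolynomially once $|m| \ge Q^B \sqrt{\log Q}$; this produces the tail error $R^2/Q^A$. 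For $|m| \le Q^B \sqrt{\log Q}$, dyadically split $|m| \in [M, 2M]$ and reindex $\sum_{m,\, q \mid m} = \sum_q \sum_k$ to get $\sum_{q\le Q} \sum_{|k| \le M/q}|S(kq)|^2 \ll (M+x) R\log Q$ (via Plancherel applied with a smooth cutoff, exploiting the $1/x$-spacing). Multiplying by $|\hat H(m)| \le \min(\log(1+x\Delta)/x, 1/M)$ and summing dyadically produces $\ll R (\log Q)^{O(1)} \log(1+x\Delta) \ll R Q^{\ee} \log(1+x\Delta)$.

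The main obstacle is constructing the smoothing $H \leadsto H^\sharp$ as a pointwise majorant (so that $\sum_{r,s} G \le \sum_{r,s} G^\sharp$) while simultaneously enforcing fast Fourier decay. A naive convolution does not yield a pointwise majorant because of the peak of $H$ at $0$, so a stratified dyadic majorant or a Beurling--Selberg-type construction is required, with the smoothing scale carefully balanced against the target error $R^2/Q^A$. A secondary subtlety is extracting $Q^{\ee}$ rather than $m^{\ee}$ from $\tau_Q(m)$: this forces the re-indexing $\sum_{m,\, q\mid m} = \sum_q\sum_k$ and a Plancherel-with-smooth-cutoff argument on the dilated sequences $\{q\vth_r\}_r$ (which are not themselves well-spaced mod $1$), in lieu of the classical large-sieve inequality.
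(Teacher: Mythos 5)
The paper does not actually prove Lemma~4.5; it is quoted from Bourgain \cite{Bou89} and Harper \cite{Harper}. So there is no internal proof to compare against, and I will assess your argument on its own terms.

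Your set-up is sound: the Poisson identity
\[
\sum_{r,s}G(\vth_r-\vth_s)=\sum_{m\in\Z}\tau_Q(m)\,\hat H(m)\,|S(m)|^2,\qquad \tau_Q(m):=\#\{q\le Q:q\mid m\},
\]
the computation $\hat H(0)=\tfrac{2}{x}\log(1+x\Delta)$ giving the $R^2Q/x$ term, and the observation that $\tau_Q(m)\ll Q^{\varepsilon}$ for $|m|\le Q^{O(1)}$ combined with the large sieve gives $RQ^{\varepsilon}\log(1+x\Delta)$ for the small-$m$ range, are all correct in spirit. The problem is the tail.

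\emph{The smoothing plan cannot produce the $R^2/Q^A$ term.} Any pointwise majorant $H^\sharp\ge H$ whose Fourier transform is concentrated on $|m|\lesssim Q^B$ must be essentially constant on $|\psi|\lesssim Q^{-B}$; since $H(0)=1$ this forces $\hat{H^\sharp}(0)\gtrsim Q^{-B}$. But $\hat H(0)\asymp\log(1+x\Delta)/x$, and $Q$ and $x$ are independent parameters, so generically $Q^{-B}\gg \log(1+x\Delta)/x$. The overestimate then contaminates the whole range $1\le|m|\le Q^{B}$: feeding $|\hat{H^\sharp}(m)|\lesssim Q^{-B}$ into the large sieve gives $Q^{\varepsilon}\cdot Q^{-B}(Q^B+x)R\gg Q^{1+\varepsilon}R$, which is not $\ll RQ^{\varepsilon}\log(1+x\Delta)$. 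The dyadic Selberg/Fej\'er construction you allude to fixes this (each layer $2^{-j}\1_{\|\psi\|\le 2^j/x}$ is majorised at its own scale), and then $|\hat{H^\sharp}(m)|\ll\log(1+x\Delta)/x$ with $\hat{H^\sharp}$ supported on $|m|\ll x$ — but this means the Fourier support reaches up to $\asymp x$, not $Q^B$, and nothing you have kills the range $Q^{B}\lesssim |m|\lesssim x$. You cannot evade this: the spike of $H$ at scale $1/x$ forces the Fourier support of any sharp majorant out to $\sim x$.

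\emph{The range $Q^{B}\lesssim|m|\lesssim x$ needs a number-theoretic input, not a Fourier one.} Your proposed ``Plancherel with smooth cutoff on the dilated sequences $\{q\vth_r\}$'' does not give the needed saving, and you flag this yourself: the $q\vth_r$ have no useful spacing. In this range $\tau_Q(m)$ can be as large as $Q$ (e.g.\ $m$ a multiple of ${\rm lcm}(1,\dots,\lfloor Q\rfloor)$, which can be $\le x$ when $Q\lesssim\log x$), so bounding $\tau_Q$ crudely and applying the large sieve to $\sum_{|m|\le x}|S(m)|^2\ll xR$ gives $QR\log(1+x\Delta)$, too big; bounding $|S(m)|^2\le R^2$ and summing $\tau_Q$ gives $R^2\log Q\log(1+x\Delta)$, also too big. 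The missing ingredient is a moment estimate: $\sum_{|m|\le x}\tau_Q(m)^K\ll_K x(\log Q)^{2^K}$ (since $\tau_Q(m)^K$ counts $K$-tuples $q_1,\dots,q_K\le Q$ with ${\rm lcm}(q_i)\mid m$, and $\sum_{q_i\le Q}1/{\rm lcm}(q_i)\ll_K(\log Q)^{2^K}$). By Chebyshev, $\#\{|m|\le x:\tau_Q(m)>Q^{\varepsilon}\}\ll_{K}xQ^{-\varepsilon(K-1)}(\log Q)^{2^K}\ll_{A,\varepsilon}x/Q^{2A}$ for $K$ large. Splitting on this event — large sieve with $\tau_Q\le Q^{\varepsilon}$, and $|S(m)|^2\le R^2$ together with $\tau_Q(m)\le Q$ on the exceptional set — is exactly what manufactures the $R^2/Q^A$ term. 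Without this step your proof has a genuine gap, and the term $R^2/Q^A$ (with \emph{arbitrary} $A$, implied constant depending on $A$) should have tipped you off that arbitrarily high moments of something are involved.
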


When we apply this, the first term on the right will dominate, showing that the main contribution to the sum on the left comes from the diagonal terms with~$r = s$.

\subsection{Variants of the Vinogradov lemma}

We also need the following variants of the Vinogradov lemma, which concerns diophantine properties of strongly recurrent polynomials. The proof of the following lemma can be found in~\cite[Lemma~4.5]{GT-manifold}.

\begin{lemma}\label{lem:vinogradov-polynomial}
Let~$k$ be a fixed positive integer and let~$\varepsilon, \delta \in (0,1/2)$ be real. Suppose that, for some~$\vth \in \R$, there are at least~$\delta M$ elements of~$m \in [-M,M] \cap \Z$ satisfying~$\| m^k \vth \| \leq \varepsilon$. If~$\varepsilon < \delta/5$, then there is a positive integer~$q \ll \delta^{-O(1)}$ such that~$\| q \vth \| \ll \delta^{-O(1)}\varepsilon / M^k$.
\end{lemma}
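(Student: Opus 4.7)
The plan is to proceed by induction on $k$, reducing the degree of the polynomial $m^k$ through finite differencing. For the base case $k=1$, this is the classical Vinogradov lemma: from the hypothesis $\|m\vth\| \leq \varepsilon < \delta/5$ for at least $\delta M$ values of $m \in [-M,M]$, one pigeonholes over differences $n = m_1 - m_2 \in [-2M, 2M]$ to produce many $n$ with $\|n\vth\| \leq 2\varepsilon$, and then invokes Dirichlet's theorem (or equivalently continued fractions) to extract a rational approximation $a/q$ to $\vth$ with $q \ll \delta^{-O(1)}$ and $\|q\vth\| \ll \delta^{-O(1)} \varepsilon / M$. The assumption $\varepsilon < \delta/5$ is precisely what forces a nontrivial $q$ rather than the trivial approximation $\vth \approx 0$.

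For the inductive step, assume the lemma holds in degree $k-1$ and let $S \subset [-M,M] \cap \Z$ denote the set of $m$ with $\|m^k\vth\| \leq \varepsilon$, so $|S| \geq \delta M$. For each nonzero shift $h$, the finite difference
\[ P_h(m) := (m+h)^k - m^k = kh\, m^{k-1} + \binom{k}{2} h^2 m^{k-2} + \cdots + h^k \]
is a polynomial in $m$ of degree $k-1$ with leading coefficient $kh$, and whenever both $m$ and $m+h$ lie in $S$ we have $\|P_h(m) \vth\| \leq 2\varepsilon$. Pigeonholing over pairs $(m_1,m_2) \in S \times S$ with $h := m_1 - m_2$, we find some $h \neq 0$ for which $\|P_h(m)\vth\| \leq 2\varepsilon$ holds for at least $\gg \delta^2 M$ values of $m \in [-M,M]$. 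Iterating this differencing $k-1$ times (each round peeling off the next subleading monomial and shrinking the density only by a bounded power) ultimately reduces to the linear statement with coefficient a constant multiple of $k!\, h_1 h_2 \cdots h_{k-1}\vth$, at which point the base case furnishes $q' \ll \delta^{-O(1)}$ controlling $k!\, h_1 \cdots h_{k-1} \vth$; a further pigeonhole over the auxiliary shifts $h_i$ then converts this into the claimed bound $\|q\vth\| \ll \delta^{-O(1)}\varepsilon/M^k$.

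The main obstacle is the parameter bookkeeping through the induction: each level of differencing replaces $\delta$ by roughly $\delta^2$ and $\varepsilon$ by a bounded multiple of itself, so after the $k$ iterations one must check that the hypothesis $\varepsilon < \delta/5$ can be propagated (possibly by absorbing constants into larger but still polynomial expressions in $\delta^{-1}$, which is permitted by the conclusion). Since $k$ is fixed the exponents $O(1)$ in the output remain uniform, and the detailed verification of these bookkeeping estimates is carried out cleanly in~\cite[Lemma~4.5]{GT-manifold}, to which we defer rather than reproducing the computation here.
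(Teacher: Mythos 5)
The paper's own proof of this lemma consists of nothing more than the citation to~\cite[Lemma~4.5]{GT-manifold}, and your proposal ends by deferring to exactly that reference, so the two agree in substance. One caution about the sketch you give along the way: as written you fix a \emph{single} shift $h_i$ at each differencing stage, so there is nothing left to ``pigeonhole over the auxiliary shifts'' at the end; converting control of $q'\,k!\,h_1\cdots h_{k-1}\vth$ (with each $|h_i|$ up to~$\asymp M$) into $\|q\vth\|\ll\delta^{-O(1)}\varepsilon/M^k$ for a small $q\ll\delta^{-O(1)}$ actually requires retaining a positive-proportion family of admissible shifts at each level and then running the linear Vinogradov lemma $k-1$ additional times to strip out each $h_i$, which is the crux of the Green--Tao argument rather than a footnote to it. Since you explicitly defer the bookkeeping to the reference, exactly as the paper does, the proposal stands as a proof-by-citation, but your informal description of the last step should not be read as an accurate outline of what the cited proof does.
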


The next lemma allows us to deal with cases where diophantine information is only available in a sparse set~$A$, which will taken to be the set of friable numbers in our application.

\begin{lemma}\label{lem:vinogradov}
Let~$k$ be a fixed positive integer and let~$\varepsilon, \delta \in (0,1/2)$ be real. Let~$1 \leq L \leq M$ be positive integers and let~$A \subset [M,2M]\cap\Z$ be a non-empty subset satisfying
\[ |A \cap P| \leq \Delta \frac{|A||P|}{M} \]
for any arithmetic progression~$P \subset [M,2M] \cap \Z$ of length at least~$L$ and some~$\Delta \geq 1$. Suppose that, for some~$\vth \in \R$ with~$\|\vth\| \leq \varepsilon/(LM^{k-1})$, there are at least~$\delta |A|$ elements of~$m \in A$ satisfying~$\|m^k\vth\| \leq \varepsilon$. Then either~$\varepsilon \gg \delta/\Delta$ or~$\|\vth\| \ll \Delta \delta^{-1} \varepsilon / M^k$.
\end{lemma}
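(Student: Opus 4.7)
The plan is to argue by contrapositive: I suppose $\|\vth\| > C \Delta \delta^{-1} \varepsilon / M^k$ for a suitably large constant $C = C(k)$, and deduce the first alternative $\varepsilon \gg \delta/\Delta$. Write $\vth' \in (-\tfrac12,\tfrac12]$ for the representative of $\vth$ modulo $\Z$, so that $|\vth'|=\|\vth\|$ and $\|m^k\vth\|=\|m^k\vth'\|$ for every integer $m$.

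The core of the argument is an analysis of the real set
\[
G := \{ m \in [M, 2M] : \|m^k \vth'\| \leq \varepsilon \}.
\]
Since $f(m) = m^k\vth'$ is strictly monotonic on $[M,2M]$ with derivative of size $\asymp_k M^{k-1}|\vth'|$, the set $G$ decomposes as a disjoint union of subintervals $I_n$, one for each integer $n$ in the range of $f$. Each $I_n$ has real length $\asymp_k \varepsilon/(M^{k-1}|\vth'|)$ and their number $N$ satisfies $N \ll_k M^k|\vth'|+1$; combined with the hypothesis $|\vth'| \leq \varepsilon/(LM^{k-1})$, this yields the combined estimate $|G| + NL \ll_k M\varepsilon + L$.

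For each $n$, I enlarge $I_n$ to an integer interval $\tilde I_n \subseteq [M,2M]$ of length $\max(|I_n \cap \Z|, L)$, regarded as an arithmetic progression of common difference one and length at least $L$. The non-concentration hypothesis then gives $|A \cap \tilde I_n| \ll \Delta|A||\tilde I_n|/M$, and summing,
\[
|A \cap G| \;\leq\; \sum_n |A \cap \tilde I_n| \;\ll_k\; \frac{\Delta|A|}{M}\bigl(|G| + NL\bigr) \;\ll_k\; \Delta|A|\varepsilon + \frac{\Delta|A|L}{M}.
\]
Now the contrapositive assumption combined with the hypothesis on $\|\vth\|$ forces $L < \delta M/(C\Delta)$, so the second term is at most $\delta|A|/C$. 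Since the lemma's hypothesis places at least $\delta|A|$ elements of $A$ inside $G$, we conclude $\delta|A| \ll_k \Delta|A|\varepsilon + \delta|A|/C$; choosing $C$ large enough to absorb the last term on the right, this yields $\varepsilon \gg \delta/\Delta$ as required.

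The main technical obstacle is careful bookkeeping: ensuring each enlarged interval $\tilde I_n$ genuinely sits inside $[M,2M]$ (a boundary adjustment), that its integer length is at least $L$ even when $|I_n|$ is very small, and that the $k$-dependent constants in the chain of estimates track correctly so that the final absorption step can be performed uniformly. The degenerate case $N \leq 1$, in which the heuristic count $N \asymp M^k|\vth'|$ breaks down, is accommodated uniformly by the additive $O(1)$ in $N \ll M^k|\vth'| + 1$ and the subsequent control of $NL$.
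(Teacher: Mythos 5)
Your proposal follows the same underlying idea as the paper --- exploit the fact that the ``good'' integers $m$ cluster into short intervals of length about $\varepsilon/(M^{k-1}\|\vth\|)$, enlarge these to progressions of length at least $L$, and apply the non-concentration hypothesis. The paper organizes this dually, by partitioning $[M,2M]$ into blocks of a judiciously chosen length $L'$ and showing that within each block all good $m$'s lie in a short run; you instead decompose the level set $G$ directly into its fundamental intervals $I_n$. Either packaging can be made to work.

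However, there is a genuine gap in your intermediate estimate $|G| + NL \ll_k M\varepsilon + L$, which you assert follows from the diophantine hypothesis $\|\vth\| \leq \varepsilon/(LM^{k-1})$ alone. This fails precisely when $N=1$, i.e.\ when $M^k\|\vth\|\ll 1$. In that regime the single interval $I_1$ can have length as large as $\min\bigl(M,\,O(\varepsilon/(M^{k-1}\|\vth\|))\bigr)$, and the hypothesis on $\vth$ only gives a \emph{lower} bound $\varepsilon/(M^{k-1}\|\vth\|) \geq L$, not an upper bound; so $|G|$ need not be $O(M\varepsilon + L)$. (Concretely, take $L=1$, $\varepsilon$ small, and $\|\vth\|$ of order $\varepsilon/M^k$; then $|G|$ is a positive proportion of $M$ while $M\varepsilon + L$ is much smaller.) Your chain of estimates as written therefore does not yield $|A\cap G| \ll \Delta|A|\varepsilon + \Delta|A|L/M$ unconditionally. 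The fix is straightforward but must be made explicit: the contrapositive assumption $\|\vth\| > C\Delta\delta^{-1}\varepsilon/M^k$, which you invoke only to force $L < \delta M/(C\Delta)$, is also needed to bound the problematic term, since it gives $\varepsilon/(M^{k-1}\|\vth\|) < \delta M/(C\Delta)$, whence $|G| + NL \ll M\varepsilon + \delta M/(C\Delta)$; from there the absorption argument proceeds as you describe. Without this additional use of the contrapositive, the argument has a hole in exactly the case $N=1$.
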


If the host set~$A$ is equidistributed, we can expect to take~$\Delta \asymp 1$, and thus the lemma upgrades the diophantine property of~$\vth$ significantly (if~$M$ is much larger than~$L$) under the strong recurrence of~$m^k\vth$.

\begin{proof}
We may assume that~$\varepsilon < 4^{-k}$ and~$\vth \neq 0$, since otherwise the conclusion holds trivially. We may also assume~$\vth\in[-1/2,1/2]$, so that~$\|\vth\|=|\vth|$. Let~$L' = \min(1/(4^k M^{k-1} |\vth|), M)$ be a parameter, and note that~$L' \geq \min(L/(4^k\varepsilon), M) \geq L$ by our assumption on~$\vth$. Let~$P' \subset [M,2M] \cap\Z$ be any interval of length~$L'$,  and take two arbitrary elements~$m_1,m_2 \in A \cap P'$ with~$\|m_1^k\vth\|, \|m_2^k\vth\| \leq \varepsilon$. Note that
\[ | m_1^k\vth - m_2^k\vth | \leq k (2M)^{k-1} |(m_1-m_2)\vth| \leq k (2M)^{k-1}L' |\vth|  < 1/2  \]
by our choice of~$L'$. Thus from the inequality
\[ \| m_1^k\vth - m_2^k\vth \| \leq \|m_1^k\vth\| + \|m_2^k\vth\| \leq 2\varepsilon \]
we deduce that~$|m_1^k\vth - m_2^k\vth| \leq 2\varepsilon$, and thus 
\[ |m_1 - m_2| \ll \frac{\varepsilon}{M^{k-1} |\vth|}. \]
We have just shown that all the integers~$m \in A \cap P'$ with~$\|m^k \vth\| \leq \varepsilon$ must lie in an interval of length~$O( \varepsilon/(M^{k-1}|\vth|) )$. Since~$\varepsilon / (M^{k-1}|\vth|) \geq L$ by the assumption on~$|\vth|$, our hypothesis implies that the number of integers~$m \in A \cap P'$ with~$\|m^k \vth\| \leq \varepsilon$  is 
\[ O\left( \frac{\Delta |A|}{M} \cdot \frac{\varepsilon}{M^{k-1} |\vth|} \right) = O\left( \frac{\Delta\varepsilon |A|}{M^k |\vth|} \right). \]
By covering~$[M,2M] \cap \Z$ by~$O(M/L')$ intervals of length~$L'$ and recalling the choice of~$L'$, we obtain
\[ \sum_{m \in A} 1_{\|m^k\vth\| \leq \varepsilon} \ll  \frac{\Delta\varepsilon |A|}{M^k |\vth|}  \cdot \frac{M}{L'} \ll \frac{\Delta \varepsilon |A|}{M^{k-1} |\vth|} \left( M^{k-1}|\vth| + \frac{1}{M} \right) = \Delta\varepsilon |A| + \frac{\Delta\varepsilon |A|}{M^k |\vth|}. \]
The left side above is at least~$\delta |A|$ by hypothesis, and thus
\[ \max\left( \Delta\varepsilon, \frac{\Delta\varepsilon}{M^k|\vth|} \right) \gg \delta. \]
This immediately leads to the desired conclusion.
\end{proof}

\section{Major arc estimates}\label{sec:major}

The goal of this section is to prove Theorem~\ref{thm:estim-majo}. We recall that the local factors~$\cPhi(\lambda, s)$ and~$H_{a/q}(s)$ were defined in~\eqref{eq:def-cPhi} and~\eqref{eq:def-Hq}, respectively. The following lemmas give bounds for~$\cPhi(\lambda, s)$ and~$H_{a/q}(s)$.

\begin{lemma}\label{lem:phi-check}
Fix a positive integer~$k$. For all~$\lambda, s \in \C$ with~$\sigma=\Re(s)\in(0, 1]$ and~$\Im(s)\ll 1$, and all~$j\geq 0$, we have
\[ \frac{\partial^j\cPhi}{\partial s^j}(\lambda, s) \ll_{j} \frac{(\log(2+|\lambda|))^{j} + \sigma^{-j}}{1+|\lambda|^{\sigma/k}}. \]
\end{lemma}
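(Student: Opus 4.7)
The natural first step is to change variables via $u = t^k$ to rewrite
\[ \cPhi(\lambda, s) = \frac{s}{k}\int_0^1 \e(\lambda u) u^{s/k-1}\dd u, \]
so that the integrand takes the form of an incomplete Mellin--Fourier integral. Throughout, I assume that $\lambda \geq 0$ is real, which suffices by the symmetry $\overline{\cPhi(\lambda, s)} = \cPhi(-\lambda, \bar s)$ and which covers the case of interest in the paper.

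For the small regime $|\lambda| \leq 1$, the plan is to integrate by parts once in $u$, using $u^{s/k-1}\dd u = (k/s)\dd u^{s/k}$, to eliminate the singularity at the origin. This yields the identity $\cPhi(\lambda,s) = \e(\lambda) - 2\pi i\lambda\int_0^1\e(\lambda u) u^{s/k}\dd u$. Differentiating $j$ times in $s$ introduces at worst $(\log u)^{j}$ factors in the integrand, and the bound $\int_0^1 u^{\sigma/k}|\log u|^{j'}\dd u \ll_{j'} 1$ immediately gives $|\partial_s^j\cPhi|\ll_j 1$. This is dominated by the claimed right-hand side since $\sigma^{-j} \geq 1$ and the denominator $1 + |\lambda|^{\sigma/k}$ is at most $2$ in this range.

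For $|\lambda| \geq 1$, rescale $v = \lambda u$ and split the resulting integral as $\int_0^\infty - \int_\lambda^\infty$, giving
\[ \cPhi(\lambda, s) = \Gamma(1+s/k)(-2\pi i\lambda)^{-s/k} - \lambda^{-s/k}\cdot\frac{s}{k}\int_\lambda^\infty \e(v) v^{s/k-1}\dd v. \]
The identity $(s/k)\Gamma(s/k) = \Gamma(1+s/k)$ removes the simple pole of $\Gamma(s/k)$ at $s = 0$, so the principal term $\Gamma(1+s/k)(-2\pi i\lambda)^{-s/k}$ is analytic in $s$ uniformly over our parameter range. Its $j$-th derivative is estimated by Leibniz's rule: derivatives of $\Gamma(1+s/k)$ are $O_j(1)$, while $\partial_s^{j'}[(-2\pi i\lambda)^{-s/k}]$ has magnitude $\ll (\log(2+|\lambda|))^{j'}|\lambda|^{-\sigma/k}$. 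Summing yields a principal contribution of order $(\log(2+|\lambda|))^j|\lambda|^{-\sigma/k}$. The tail $\int_\lambda^\infty\e(v) v^{s/k-1}\dd v$ and its $s$-derivatives are controlled by a single integration by parts against $\e(v)$, with the boundary at infinity vanishing since $\sigma/k \leq 1$; this produces $\ll_j |\lambda|^{\sigma/k-1}(\log(2+|\lambda|))^j$, which after multiplication by the outer factor $\lambda^{-s/k}\cdot(s/k)$ is $\ll_j (\log(2+|\lambda|))^j |\lambda|^{-1} \leq (\log(2+|\lambda|))^j |\lambda|^{-\sigma/k}$.

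The main subtle point is the cancellation of the pole of $\Gamma(s/k)$ at $s = 0$ via the identity $(s/k)\Gamma(s/k)=\Gamma(1+s/k)$; without this cancellation one would naively incur spurious losses of order $\sigma^{-j-1}$ from the Laurent expansion of $\Gamma$ near zero, which are however absorbed by the $\sigma^{-j}$ term on the right-hand side of the lemma. Combining the two regimes yields the stated bound, actually in the stronger form
\[ \partial_s^j\cPhi(\lambda, s) \ll_j \frac{(\log(2+|\lambda|))^j}{1+|\lambda|^{\sigma/k}}, \]
which implies the lemma's claim.
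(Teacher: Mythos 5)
Your argument is essentially self-contained where the paper's is not: the paper disposes of this lemma by citing \cite[Lemma~2.4]{D2014} together with the change of variables $t\gets t^{1/k}$, which is exactly the substitution $u=t^k$ with which you open. The remaining ingredients — integrating by parts once to remove the singularity at $u=0$ when $|\lambda|\leq 1$, rescaling $v=\lambda u$ when $|\lambda|\geq 1$ and recognising the full-line Mellin--Fourier integral as $\Gamma(s/k)(-2\pi i)^{-s/k}$, and, crucially, the observation that the factor $s/k$ cancels the simple pole of $\Gamma(s/k)$ at $s=0$ so no $\sigma^{-j}$ losses appear — are the standard steps, and you execute them correctly. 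Your restriction to $\lambda\in\R$ (then $\lambda\geq 0$ by conjugation symmetry) is in fact necessary: for $\lambda=-iT$ with $T\to\infty$ one has $\cPhi(-iT,s)\asymp s\e^{2\pi T}/T$, so the bound as literally stated fails for general complex $\lambda$, and only real $\lambda$ ever occurs in the paper's applications. Your concluding stronger form, with $(\log(2+|\lambda|))^j$ alone in the numerator, does appear to be correct under the stated hypotheses $\Im(s)\ll 1$ and $\sigma\in(0,1]$.

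Two small technical points need tightening, both relevant only when $k=1$ (for $k\geq 2$ one has $\sigma/k\leq 1/2$ and neither arises). First, the split $\int_0^\lambda=\int_0^\infty-\int_\lambda^\infty$ requires $\sigma/k<1$ strictly for both improper integrals to converge conditionally at infinity; your phrase ``the boundary at infinity vanishing since $\sigma/k\leq 1$'' should read $\sigma/k<1$. The boundary case $k=1$, $\sigma=1$ is thus outside the scope of this decomposition and should be handled by integrating by parts directly on $\int_1^\lambda\e(v)v^{s/k-1}\dd v$ (after splitting off the trivially bounded piece $\int_0^1$), or by continuity in $\sigma$. Second, a \emph{single} integration by parts on $\int_\lambda^\infty\e(v)(\log v)^{j'}v^{s/k-1}\dd v$ leaves a remaining integral of $(\log v)^{j'}v^{s/k-2}$; estimated in absolute value, its moments contribute factors $(1-\sigma/k)^{-1},\dots,(1-\sigma/k)^{-j'-1}$, which are bounded for $k\geq 2$ but blow up as $\sigma\to 1$ when $k=1$ and hence are not absorbed by the claimed right-hand side. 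To get the asserted bound $\ll_{j'}(\log(2+|\lambda|))^{j'}\lambda^{\sigma/k-1}$ uniformly one should integrate by parts a second time, so that the final absolutely convergent integral involves $v^{\sigma/k-3}$ and its log-moments carry no small denominators. Both fixes are purely cosmetic; the structure and the conclusion of your proof are sound.
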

\begin{proof}
This follows from~\cite[Lemma~2.4]{D2014}, by a change of variables~$t\gets t^{1/k}$.
\end{proof}

\begin{lemma}\label{lem:Haq-bound}
Fix a positive integer~$k$. For all~$0 \leq a < q$ with~$(a,q) = 1$, and all~$\alpha \in (0,1]$, we have
\[ H_{a/q}(\alpha) \ll_\ee q^{-\alpha/k+\ee} \]
for any~$\ee > 0$.
\end{lemma}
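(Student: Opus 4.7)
My plan is to exploit the multiplicativity of $q \mapsto H_{a/q}(\alpha)$ in the denominator $q$ and reduce to a bound at each prime power. First I would verify that, via the Chinese Remainder Theorem---using the factorisations $d_1 = d_1^{(1)} d_1^{(2)}$, $d_2 = d_2^{(1)} d_2^{(2)}$, the multiplicativity of $\mu$ and $\varphi$, and the standard splitting $e(ab^k/q_1 q_2) = e(a_1 b_1^k/q_1)\, e(a_2 b_2^k/q_2)$ for CRT-adjusted units $a_i$---one has $H_{a/q_1 q_2}(\alpha) = H_{a_1/q_1}(\alpha)\, H_{a_2/q_2}(\alpha)$ whenever $(q_1, q_2) = 1$. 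The friability constraint $P(d_1 d_2) \leq y$ also factors, forcing $d_1^{(i)} = d_2^{(i)} = 1$ at any prime $p > y$ dividing $q$. It would then suffice to prove the pointwise bound $|H_{a'/p^f}(\alpha)| \leq (f+1)\, p^{-f\alpha/k + \varepsilon f}$ at each prime power $p^f$ with $(a', p) = 1$; this combined with the divisor bound $\tau(q) \ll q^\varepsilon$ delivers the claim.

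At each prime power $p^f$, I would simplify the inner sum over $b \mod p^f$ with $(b, p^f) = p^j$ by writing $b = p^j c$, $(c, p) = 1$, reducing it to
\[ T_j := \sum_{\substack{c \mod p^{f-j} \\ (c,p) = 1}} e\!\left(\frac{a\, p^{j(k-1)} c^k}{p^{f-j}}\right). \]
When $jk \geq f$ the exponential is trivial and $T_j = \varphi(p^{f-j})$; when $jk < f$ the reduction $c \mapsto c \mod p^{f-jk}$, which is $p^{j(k-1)}$-to-one on units, collapses $T_j$ to $p^{j(k-1)}\, G_k(p^{f-jk}, a, \chi_0)$. For primes $p > y$, friability restricts the outer sum to the single term $j = 0$, $d_2 = 1$, so Lemma~\ref{lemme:gauss} applied with trivial character ($q^* = 1$) yields $|H_{a'/p^f}(\alpha)| \ll (f+1)\, p^{-f/2}$, which dominates $p^{-f\alpha/k}$ for $k \geq 2$; the case $k = 1$ reduces to a Ramanujan sum and is immediate.

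For $p \leq y$ I would split the sum at the threshold $j = \lceil f/k \rceil$. The ``easy'' terms with $jk \geq f$ combine cleanly: the factor $(1 - p^{-\alpha})$ coming from $d_2 \in \{1, p\}$ telescopes the geometric series $\sum_{\lceil f/k \rceil \leq j \leq f-1} p^{-j\alpha}$, and the residual piece at $j = f$ (where $T_f = 1$) cancels the leftover, giving a total contribution of exactly $p^{-\lceil f/k\rceil \alpha} \leq p^{-f\alpha/k}$. For the ``hard'' terms $jk < f$, Lemma~\ref{lemme:gauss} gives $|G_k(p^{f-jk}, a, \chi_0)| \ll (f+1)\, p^{(f-jk)/2}$, and a short arithmetic simplification yields $|T_j|/(p^{j\alpha}\, \varphi(p^{f-j})) \ll f\, p^{jk/2 - f/2 - j\alpha}$; the required bound $\leq p^{-f\alpha/k}$ reduces to $j(k - 2\alpha) \leq f(k - 2\alpha)/k$, i.e., $j \leq f/k$, which is our standing assumption. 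Summing geometrically over $j$ then completes the prime-power bound.

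The main technical nuisance will be the bookkeeping of the CRT-adjusted units $a_i$ together with the friability condition through the multiplicative factorisation, and ensuring the geometric sums remain well-behaved uniformly in $\alpha \in (0, 1]$: in the saturated regime $k = 2$ with $\alpha$ near $1$, the exponent inequality is nearly an equality, so the losses in the prefactors $(f+1)$ and (for small $\alpha$) $(1 - p^{-\alpha})^{-1}$ must be tracked and absorbed into $\tau(q) \ll q^\varepsilon$ only at the multiplicative assembly step. No Weyl-type bound beyond Lemma~\ref{lemme:gauss} is needed, which is convenient since the lemma's bound $\sqrt{q}$ for the trivial character already beats $q^{1-\alpha/k}$ whenever $k \geq 2$.
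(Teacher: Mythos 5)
Your overall strategy---multiplicativity of $H_{a/q}(\alpha)$ in $q$, reduction to prime powers, and pointwise bounds via Gauss-sum estimates---is the same in spirit as the paper's, which proves the identity $H_{a/q}(\alpha) = S(x,y;q,a) = \sum_{b\bmod q}\mu_q(b)\,\e(ab^k/q)$ (Lemma~\ref{lem:S=H(a/q)}), notes that $S(x,y;q,a)$ is multiplicative, and bounds it at prime powers by expressing $S(x,y;p^m,a)$ through the classical normalized sums $S(p^n,a)$ and invoking $|S(q,a)|\ll q^{-1/k+\varepsilon}$ (Lemma~\ref{lem:Sqa}). Your unwinding of the inner sum by writing $b = p^j c$ is a correct and explicit version of the same computation, and your telescoping of the $jk\ge f$ terms to $p^{-\lceil f/k\rceil\alpha}$ is right.

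There is, however, a genuine gap at $k=1$. For the hard terms ($j < f$) with $p\le y$, you bound $T_j$ by $p^{j(k-1)}|G_k(p^{f-jk},a',\chi_0)|$ and invoke Lemma~\ref{lemme:gauss}, which for the trivial character gives only $|G_k(q,a',\chi_0)|\ll q^{1/2+\varepsilon}$. Your resulting inequality $jk/2 - f/2 - j\alpha \le -f\alpha/k$ is equivalent to $j(k-2\alpha)\le f(k-2\alpha)/k$, and dividing by $k-2\alpha$ is only legitimate when $k-2\alpha>0$. When $k=1$ and $\alpha>1/2$ (the regime of interest, where $\alpha$ is close to $1$), this sign flips and the inequality you want actually \emph{fails}: the $j=0$ hard term would only be bounded by $p^{-f/2+\varepsilon}$, far short of $p^{-f\alpha}$. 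You correctly flag that for $p>y$ the sum is a Ramanujan sum and is thus tiny, but the same observation must also be used for $p\le y$: for $k=1$ one has $T_j = G_1(p^{f-j},a',\chi_0) = \mu(p^{f-j})$, so only $j=f-1$ survives and contributes $O(p^{-f\alpha})$. The paper avoids this issue because it cites the classical bound $|S(q,a)|\ll q^{-1/k+\varepsilon}$ (for $k=1$ this is $q^{-1}$, coming precisely from the Ramanujan/vanishing phenomenon), which is much stronger than the generic $q^{-1/2}$ from Lemma~\ref{lemme:gauss}. For $k\ge 2$ your use of Lemma~\ref{lemme:gauss} is fine since $q^{-1/2}\le q^{-1/k}$. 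So the fix is straightforward, but as written the $k=1$, $p\le y$, $\alpha>1/2$ case is not covered.
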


\begin{proof}
This follows from Lemmas~\ref{lem:S=H(a/q)} and~\ref{lem:Sqa} in the appendix.
\end{proof}

The plan of this section is the following. A standard manipulation decomposes the exponential phase~$\e(n^k\vth)$ into a periodic part~$\e(n^ka/q)$, and a perturbation~$\e(n^k\delta)$. In Section~\ref{sec:non-princ-char}, we handle the twist by~$\e(n^ka/q)$ using results about friable character sums. In Sections~\ref{sec:main-term-0-big-y} and~\ref{sec:main-term-0-small-y}, we evaluate the exponential sum around~$\vth=0$, using the asymptotic formula for~$\Psi(x,y)$ and partial summation for large~$y$, and the saddle point method for small~$y$. In Section~\ref{sec:main-term-general}, we extend the analysis to all of the major arcs, using ``semi-asymptotic'' results about~$\Psi(x, y)$.

\subsection{Handling the non-principal characters}\label{sec:non-princ-char}

For~$\vth = a/q + \delta$ with~$0 \leq a < q$ and~$(a,q)=1$, we define the contribution of the principal characters to be
\begin{equation}
M_k(x, y ; \vth) = \ssum{d_1d_2|q\\P(d_1d_2)\leq y} \frac{\mu(d_2)}{\vphi(q/d_1)}\ssum{b\mod{q}\\(b, q)=d_1}\e\Big(\frac{ab^k}{q}\Big) E_k\big(\frac x{d_1d_2}, y ; (d_1d_2)^k\delta\big). \label{Mk}
\end{equation}
The exact form of this contribution will be clear from the first few lines of the proof of Proposition~\ref{prop:Ek-Mk} below, which says that the contributions from non-principal characters are negligible. Recall the notations from \eqref{eq:notations}.

\begin{prop}\label{prop:Ek-Mk}
There exist~$K, c>0$ such that under the condition
\begin{equation}\label{eq:arcmaj-1}
(\log x)^K\leq y\leq x, \qquad q(1+|\delta|x^k) \leq Y^c,
\end{equation}
we have
\begin{equation}
E_k(x, y ; \vth) = M_k(x, y ; \vth) + O_A(\Psi(x, y)(1+|\delta|x^k)(y^{-c} + H(u)^{-c}(\log x)^{-A}))\label{eq:Ek-Mk}
\end{equation}
for any~$A > 0$.
\end{prop}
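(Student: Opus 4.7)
The approach is to expand $E_k(x,y;\vth)$ in Dirichlet characters modulo $q$, identify the principal-character contribution as $M_k$, and bound the non-principal contribution using Harper's friable character sum estimates \eqref{eq:borne-carac-normal}--\eqref{eq:borne-carac-siegel}. I would first partition $E_k(x,y;\vth)$ according to $d_1 := (n,q)$, which must be $y$-smooth since $d_1 \mid n$; writing $n = d_1 n'$ and $b = d_1 b_0$ with $(b_0, q/d_1) = 1$, the congruence $n \equiv b \pmod q$ becomes $n' \equiv b_0 \pmod{q/d_1}$. Detecting the congruence by characters $\chi \pmod{q/d_1}$ applied to $n'$, and then removing the coprimality $(n', q/d_1) = 1$ by M\"obius inversion (which produces the $\mu(d_2)$ weight together with the condition $P(d_1 d_2)\le y$), the principal-character piece equals precisely $M_k(x,y;\vth)$ as in \eqref{Mk}, so the remainder is
\[ R \;=\; E_k - M_k \;=\; \sum_{\substack{d_1 \mid q \\ P(d_1)\le y}} \frac{1}{\varphi(q/d_1)} \sum_{\substack{\chi \pmod{q/d_1} \\ \chi \neq \chi_0}} \overline{G_k(q/d_1,\, a d_1^{k-1},\, \bar\chi)} \, T_\chi, \]
where $T_\chi := \sum_{n' \in S(x/d_1, y)} \chi(n') \e(d_1^k n'^k \delta)$ (using that $\chi(n')=0$ when $(n',q/d_1)>1$) and $G_k$ is the Gauss sum of Lemma~\ref{lemme:gauss}.

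Next, Abel summation on $T_\chi$ strips the factor $\e(d_1^k n'^k \delta)$: since its $n'$-derivative has size $O(|\delta|\, d_1\, x^{k-1})$, one gets $|T_\chi| \ll (1 + |\delta|x^k) \sup_{u \leq x/d_1} |\Psi(u, y; \chi)|$. For characters $\chi$ not induced by the possibly exceptional primitive $\chi_1$ of conductor $q_1 \leq Y^c$, I would apply \eqref{eq:borne-carac-normal} to obtain $|\Psi(u,y;\chi)| \ll \Psi(u,y) Y^{-c}$, combine this with Lemma~\ref{lemme:gauss} (grouping characters by their conductor $q^* \mid q/d_1$ to average the Gauss sum) and with Lemma~\ref{lem:adam-smooth-2} (to replace $\Psi(u,y)$ by $d_1^{-\alpha} \Psi(x,y)$), sum over $d_1 \mid q$, and finally use $q \leq Y^c$. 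This produces a ``generic'' contribution $\ll \Psi(x,y)(1 + |\delta|x^k)\, y^{-c'}$ for some $c' > 0$, matching the $y^{-c}$ term in \eqref{eq:Ek-Mk}.

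The main obstacle is the exceptional-character contribution: for each $d_1$ with $q_1 \mid q/d_1$, there is exactly one induced character modulo $q/d_1$, whose bound \eqref{eq:borne-carac-siegel} carries the factor $(\log q_1/\log x)\bigl(\sum_{d\mid (q/d_1)/q_1} d^{-\alpha}\bigr) \{y^{-c} + H(u)^{-c}\}$. Multiplying by the Gauss sum bound (of size $\ll (q/d_1)^\ee / \sqrt{q_1}$, taking $q^*=q_1$ in Lemma~\ref{lemme:gauss}) and summing over $d_1$, the remaining factors are polylogarithmic in $x$. To extract the additional $(\log x)^{-A}$ on the $H(u)^{-c}$ piece, I would split on the size of $y$: when $y \geq (\log x)^{A/c}$, one has $y^{-c} \leq (\log x)^{-A}$, which absorbs the requirement into the first term; when $y \leq (\log x)^{A/c}$, one has $u \gg (\log x)/\log\log x$, so $H(u)^{-c}$ is already smaller than any fixed power of $\log x$, and a fraction of it may be traded to produce $(\log x)^{-A}$. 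Taking $K = K(A)$ large enough to cover both regimes completes the argument.
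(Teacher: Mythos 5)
Your overall structure — character decomposition modulo $q/d_1$, identification of the principal-character piece as $M_k$, Gauss-sum bounds from Lemma~\ref{lemme:gauss}, Harper's friable character sum estimates \eqref{eq:borne-carac-normal}--\eqref{eq:borne-carac-siegel}, and partial summation to peel off the $\e(\delta t^k)$ factor — is the same as the paper's, modulo the inessential choice of whether to do the partial summation before or after introducing characters. The normal-character contribution is handled in essentially the same way as the paper (with the minor caveat that in your $\sup_{u\le x/d_1}|\Psi(u,y;\chi)|$ you should separate the ranges $u\le x/Y$, where the trivial bound plus Lemma~\ref{lem:adam-smooth-2} suffices, from $u\ge x/Y$, where $\log u\asymp\log x$ so \eqref{eq:borne-carac-normal} actually applies at scale $u$; the paper achieves exactly this by restricting its integration-by-parts to $t\in[x/Y,x]$).

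The genuine gap is in the exceptional character. Your proposed dichotomy is on whether $y$ is larger or smaller than $(\log x)^{A/c}$, and it only works in the second half. When $y$ is large --- say $y$ is a fixed power of $x$, so $u=O(1)$ --- the bound \eqref{eq:borne-carac-siegel} gives only $|\Psi(t,y;\chi)|\ll\Psi(t,y)\cdot q_1^{-1/4}\cdot\bigl(H(u)^{-c}+y^{-c}\bigr)$ after combining with the Gauss sums, and here $H(u)^{-c}\asymp 1$. The target error $y^{-c'}+H(u)^{-c'}(\log x)^{-A}$ is then $\asymp(\log x)^{-A}$, so you need $q_1\gg(\log x)^{4A}$. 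Nothing you have written forces that; your claim that the $y^{-c}$ piece ``absorbs the requirement'' is false, because the $H(u)^{-c}$ piece of the exceptional bound is $\gg y^{-c'}$ whenever $u$ is bounded. The missing ingredient is precisely Siegel's theorem, applied to the conductor $q_1\le Y^c$ of the exceptional primitive character, giving $q_1\gg_A(\log Y)^A\gg(\log x)^{A/2}$ whenever $y\ge \e^{\sqrt{\log x}}$ (the complementary range being the one your second case correctly handles, since there $H(u)$ dominates any fixed power of $\log x$). This is where the ineffectivity for $A>0$, noted in the paper, enters. Relatedly, your plan to take $K=K(A)$ is at odds with the statement: $K$ must be absolute, with only the implied $O_A$-constant depending on $A$; the paper's proof respects this, and the dependence on $A$ that you are trying to compensate for by enlarging $K$ is an artifact of not invoking Siegel.
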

\begin{proof}
Consider first the case when~$\delta=0$ (so that~$\vth = a/q$). We decompose
\begin{align*}
E_k(x, y ; a/q) =\ &\sum_{b\mod{q}}\e\Big(\frac{b^k a}q\Big)\ssum{n\in S(x, y) \\ n \equiv b\mod{q}} 1 \\
=\ &\ssum{d|q\\P(d)\leq y}\ssum{b\mod{q}\\(b,q)=d}\e\Big(\frac{ab^k}{q}\Big)\ssum{n\in S(x/d, y)\\n\equiv b/d \mod{q/d}}1 \\
=\ &\ssum{d|q\\P(d)\leq y}\frac1{\vphi(q/d)}\sum_{\chi\mod{q/d}}G_k(q/d, ad^{k-1}, \overline{\chi})\Psi(x/d, y ; \chi).
\end{align*}
The contribution of the principal character~$\chi = \chi_0$ is precisely~$M_k(x, y ; a/q)$ since
\[ \Psi(x/d,y; \chi_0) = \ssum{n \in S(x/d,y) \\ (n,q/d)=1} 1 = \sum_{d_2 \mid q/d} \mu(d_2) E_k( x/(d d_2), y; 0). \]
For the non-principal characters, we apply the bounds~\eqref{eq:borne-carac-normal} and~\eqref{eq:borne-carac-siegel}. We split the non-principal characters into two categories, according to whether or not the associated Dirichlet series has a real zero in the interval~$[1-K/\log Y, 1]$. Define a character to be \emph{normal} if its Dirichlet series has no such zero, and \emph{exceptional} if it does. The exceptional characters, if exist, consist of characters induced by a unique real primitive character~$\chi_1$ of conductor~$q_1$, say. Let
\[ \CN := \ssum{d|q\\P(d)\leq y}\frac1{\vphi(q/d)}\ssum{\chi\mod{q/d}\\\chi\text{ is normal}}G_k(q/d, ad^{k-1}, \overline{\chi})\Psi(x/d, y ; \chi), \]
\[ \CE := \ssum{d|q/q_1\\P(d)\leq y}\frac1{\vphi(q/d)}\ssum{\chi\mod{q/d}\\\chi\text{ is exceptional}}G_k(q/d, ad^{k-1}, \overline{\chi})\Psi(x/d, y ; \chi). \]
To bound~$\CN$, we use the trivial bound
\begin{equation}
|G_k(q/d, ad^{k-1}, \overline{\chi})| \leq q/d,\label{eq:borne-Gk-triviale}
\end{equation}
and Lemma~\ref{lem:adam-smooth-2}. Note that~$\log(x/q) \asymp \log x$, so that uniformly over~$d\leq q$ and all normal characters~$\chi$, we have
\[ \Psi(x/d, y ; \chi) \ll d^{-\alpha}\Psi(x, y)Y^{-c} .\]
Combining this with the trivial bound~\eqref{eq:borne-Gk-triviale}, we obtain
\begin{equation}
\begin{aligned}
\CN \ll \Psi(x, y)Y^{-c}q\sum_{d|q} d^{-1-\alpha} \ll \Psi(x, y)Y^{-c/2},
\end{aligned}\label{eq:majo-cN}
\end{equation}
given our hypothesis~\eqref{eq:arcmaj-1}.

We now bound~$\CE$. The upper bound we have for the character sum~$\Psi(x, y ; \chi)$ is very poor when~$u$ is small, therefore, more care must be taken. We have by Lemma~\ref{lemme:gauss}
\begin{equation} 
|G_k(q/d, ad^{k-1}, \overline{\chi})| \leq 2k^{\omega(q)}\tau(q)\min\{q/(d\sqrt{q_1}), \sqrt{d^{k-2}q}\} \ll_\ee q^{\ee}\sqrt{q_1}\Big(\frac{q}{q_1}\Big)^{1-1/k}.\label{eq:borne-Gk-nontv}
\end{equation} 
Thus
\begin{equation}
\CE \ll q^\ee\sqrt{q_1}\Big(\frac{q}{q_1}\Big)^{1-1/k}\ssum{d|q/q_1} \frac{|\Psi(x/d, y ; \chi_{q/d})|}{q/d}\label{eq:borne-cE}
\end{equation}
where~$\chi_{q/d}$ stands for the character~$\mod{q/d}$ induced by~$\chi_1$. For the same reason as before, since~$\log(x/d)\asymp \log x$, the character sum bound~\eqref{eq:borne-carac-siegel} can be applied with~$x$ replaced by~$x/d$ and yields
\[ |\Psi(x/d, y ; \chi_{q/d})| \ll_\ee d^{-\alpha} q^\ee \Psi(x, y)(H(u)^{-c}+y^{-c}). \]
We deduce
\begin{align*}
\CE &\ll \Psi(x, y)(H(u)^{-c}+y^{-c})q^\ee\frac{\sqrt{q_1}}q\Big(\frac{q}{q_1}\Big)^{1-1/k}\sum_{d|q/q_1} d^{1-\alpha} \\ &\ll \Psi(x, y)(H(u)^{-c}+y^{-c})\frac{q^\ee}{\sqrt{q_1}}\Big(\frac{q}{q_1}\Big)^{1-\alpha-1/k}.
\end{align*}
Assuming that~$K$ is so large that~$1-\alpha<1/(4k)$, we obtain~$\CE \ll q_1^{-1/4} \Psi(x, y) (H(u)^{-c} + y^{-c})$. If~$y<\e^{\sqrt{\log x}}$, then~$(\log x) =O_\ee(H(u)^{\ee})$ for any~$\ee>0$, so that the required bound
\[ \CE \ll \Psi(x, y)(H(u)^{-c/2}(\log x)^{-A}+y^{-c}) \]
follows immediately from~$q_1\geq 1$. If~$y\geq \e^{\sqrt{\log x}}$, then by Siegel's theorem, we have~$q_1\gg_A (\log Y)^A = (\log x)^{A/2}$ for any~$A>0$ (the constant being ineffective unless~$A<2$). We deduce
\begin{equation}
\CE \ll_A \Psi(x, y)(H(u)^{-c}(\log x)^{-A}+y^{-c}).\label{eq:majo-cE}
\end{equation}

Grouping our bounds~\eqref{eq:majo-cN} and~\eqref{eq:majo-cE}, we have shown
\begin{equation}
E_k(x, y ; a/q) = M_k(x, y ; a/q) + O(\Psi(x, y)(y^{-c}+H(u)^{-c}(\log x)^{-A})),\label{eq:Ek-Mk-aq}
\end{equation}
the implicit constant being effective if~$A=0$.

For general~$\delta$, by integration by parts, we may write
\begin{align*}
E_k(x, y ; \vth) = \e(\delta x^k)E_k(x, y ; a/q) - 2\pi i\delta\int_{x/Y}^x kt^{k-1}\e(\delta t^k)E_k(t, y ; a/q)\dd t + O(\Psi(x/Y, y)).
\end{align*}
The error term here is~$O(\Psi(x, y)/Y^\alpha)$ which is acceptable. Note that for~$t\in[x/Y, x]$, we have~$\log t\asymp \log x$, so that by~\eqref{eq:Ek-Mk-aq}, we have
\[ E_k(t, y ; a/q) = M_k(t, y ; a/q) + O_A(\Psi(t, y)(y^{-c}+H(u)^{-c}(\log x)^{-A})) \qquad (x/Y \leq t\leq x). \]
Note that~$|\delta| \int_{x/Y}^xkt^{k-1}\dd t \leq |\delta|x^k$, so that by~\eqref{eq:arcmaj-1}, we obtain
\begin{align*}
E_k(x, y ; \vth) =\ & \e(\delta x^k)M_k(x, y ; a/q) - 2\pi i\delta\int_{x/Y}^x kt^{k-1}\e(\delta t^k)M_k(t, y ; a/q)\dd t \\
\ &+ O(\Psi(x, y)(1+|\delta|x^k)(y^{-c}+H(u)^{-c}(\log x)^{-A})).
\end{align*}
Integrating by parts, the main terms above are regrouped into
\[ M_k(x, y ; \vth) + O(\Psi(x/Y, y)) \]
which yields our claimed bound.
\end{proof}

The next step is to evaluate the contribution from the principal character~$M_k(x,y; \vth)$. As is classically the case in the study of friable numbers, we shall use two different methods according to the relative sizes of~$x$ and~$y$. 

\subsection{The main term in the neighborhood of~$\vth=0$, for large values of~$y$}\label{sec:main-term-0-big-y}

In this section, we evaluate the contribution of principal characters on the major arc centered at~$0$, when~$y$ is large. The target range for~$(x, y)$ is
\begin{equation}
\label{eq:Hee}\tag{$H_\ee$}
\exp\{(\log\log x)^{5/3+\ee}\} \leq y\leq x.
\end{equation}
Recall that~$\CY_\ee$ is defined in~\eqref{eq:notations}.
\begin{prop}\label{prop:Ek-bigy}
Let~$\ee>0$ be small and fixed. Let~$\delta \in \R$ and write~$\CQ = 1 + |\delta|x^k$. Then whenever~$x$ and~$y$ satisfy~\eqref{eq:Hee}, there holds
\[
E_k(x, y ; \delta) = \Psi(x, y)\left\{ \cPhi(\delta x^k, 1) + O_\ee\Big(\frac{\log(2\CQ)}{\CQ^{1/k}}\cdot \frac{\log(u+1)}{\log y} + \CQ\CY_\ee^{-1}\Big) \right\}.
\]
\end{prop}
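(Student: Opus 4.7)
The plan is to combine Abel summation with a semi-asymptotic formula for $\Psi(t,y)$. Starting from the Stieltjes representation
\[
E_k(x, y; \delta) = \int_0^x \e(\delta t^k) \, d\Psi(t, y),
\]
an integration by parts gives
\[
E_k(x, y; \delta) = \e(\delta x^k)\Psi(x, y) - 2\pi i k \delta \int_0^x t^{k-1} \e(\delta t^k) \Psi(t, y) \, dt.
\]
The game is to plug in a suitable approximation for $\Psi(t, y)$ in terms of $\Psi(x, y)$.

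In the range \eqref{eq:Hee}, the Hildebrand--Tenenbaum formula \eqref{eq:HT-psixy}, combined with control on the variation of the saddle point $\alpha(t, y)$ and a contour shift exploiting the Vinogradov--Korobov zero-free region for $\zeta(s, y)$, should yield the semi-asymptotic expansion
\[
\Psi(t, y) = \Psi(x, y) (t/x)^{\alpha} \bigl(1 + O_\ee(\CY_\ee^{-1})\bigr) \qquad (t \in [x/\CY_\ee, x]),
\]
with $\alpha = \alpha(x, y)$. For smaller $t$, Lemma~\ref{lem:adam-smooth-2} supplies the cheaper bound $\Psi(t, y) \ll \Psi(x, y)(t/x)^\alpha$.

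Inserting these and using $\int_0^{x/\CY_\ee} t^{k+\alpha-1}\,dt \ll x^{k+\alpha}\CY_\ee^{-(k+\alpha)}$, both the tail contribution from $t \leq x/\CY_\ee$ and the error term of the semi-asymptotic on $[x/\CY_\ee, x]$ contribute $O(\CQ\CY_\ee^{-1}\Psi(x, y))$, accounting for the second error in the proposition. After substitution $u = t/x$, the main piece becomes
\[
\Psi(x, y) \Bigl[\e(\delta x^k) - 2\pi i k \delta x^k \int_0^1 u^{k+\alpha-1} \e(\delta x^k u^k) \, du\Bigr].
\]
Integrating $\cPhi(\delta x^k, \alpha) = \alpha\int_0^1 u^{\alpha-1}\e(\delta x^k u^k)\,du$ by parts identifies this bracket as exactly $\cPhi(\delta x^k, \alpha)$. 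Passing to $\cPhi(\delta x^k, 1)$ via Lemma~\ref{lem:phi-check} with $j=1$,
\[
|\cPhi(\delta x^k, 1) - \cPhi(\delta x^k, \alpha)| \leq (1-\alpha) \sup_{\sigma \in [\alpha, 1]} |\partial_s\cPhi(\delta x^k, \sigma)| \ll \frac{(1-\alpha)\log(2\CQ)}{\CQ^{1/k}},
\]
using that $\CQ^{(1-\alpha)/k} = O(1)$ in our range so that $\CQ^{\alpha/k}\asymp \CQ^{1/k}$; combined with $1 - \alpha \ll \log(u+1)/\log y$ from~\eqref{eq:alpha-1}, this delivers the first error term.

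The technical crux is the semi-asymptotic formula for $\Psi(t,y)$: producing the multiplicative accuracy $1 + O(\CY_\ee^{-1})$ uniformly over $t \in [x/\CY_\ee, x]$ requires careful tracking of $\alpha(t,y)$ as $t$ varies, together with the Vinogradov--Korobov zero-free region for the truncated zeta. The calibration $\CY_\ee = \e^{(\log y)^{3/5-\ee}}$ is precisely dictated by this zero-free region, and the hypothesis~\eqref{eq:Hee} is what ensures the needed input is available. Once this is in hand, the remaining steps are a routine partial-summation calculation.
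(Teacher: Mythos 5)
Your outline is structurally reasonable (Abel summation, plug in an approximation for $\Psi(t,y)$, identify $\cPhi$, compare $\alpha$ and $1$), but there is a genuine gap at the ``semi-asymptotic formula'' you propose, namely
\[
\Psi(t, y) \overset{?}{=} \Psi(x, y) (t/x)^{\alpha} \bigl(1 + O_\ee(\CY_\ee^{-1})\bigr) \qquad (t \in [x/\CY_\ee, x]).
\]
This is false at the claimed level of accuracy. The Hildebrand--Tenenbaum formula~\eqref{eq:HT-psixy} has relative error $O(1/u + (\log y)/y)$, which in the range~\eqref{eq:Hee} with $u$ bounded is nowhere near $\CY_\ee^{-1}$, so it cannot serve as the basis for such a statement. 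What is true (by~\cite[Th\'eor\`eme~2.4]{BT2005}, which the paper uses elsewhere) is
\[
\Psi(x/d, y) = \frac{\Psi(x, y)}{d^\alpha}\Bigl(1 + O\Bigl((\log d)\,\frac{\log(u+1)}{\log y}\Bigr)\Bigr),
\]
i.e.\ the relative error for $t=x/d \in [x/\CY_\ee, x]$ is of size roughly $(\log y)^{3/5-\ee}\cdot\log(u+1)/\log y$ --- vastly larger than $\CY_\ee^{-1}$. If you insert this correct error into your integral
\[
-2\pi i k \delta \int_0^x t^{k-1} \e(\delta t^k)\,\Psi(t,y)\,dt
\]
and bound the error contribution by absolute value (you have no oscillation left to exploit after taking absolute values), you obtain $O\bigl(\Psi(x,y)\,\CQ\cdot\log(u+1)/\log y\bigr)$, which is larger than the target error $\Psi(x,y)\log(2\CQ)\CQ^{-1/k}\log(u+1)/\log y$ by essentially a factor $\CQ^{1+1/k}$. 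So the $\log(u+1)/\log y$ in the proposition does \emph{not} arise purely from the step $\cPhi(\cdot,\alpha)\to\cPhi(\cdot,1)$ as in your final paragraph; the real difficulty is that it has to appear multiplied by $\CQ^{-1/k}$, and your decomposition destroys the cancellation needed to produce that factor.

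What the paper does instead is use Saias' theorem~\eqref{eq:psi-lambda-saias}, $\Psi(t,y) = \Lambda(t,y)\{1+O(\CY_\ee^{-1})\}$, where $\Lambda(t,y)$ is De Bruijn's integral and is \emph{not} of the form $C(t/x)^\alpha$: it encodes the genuine fluctuations of $\Psi$ (including the sawtooth terms from $\{z\}$ and $\{z/y\}$) with the high precision $\CY_\ee^{-1}$. The paper then integrates by parts against $d\Lambda(z,y)$ and splits $\Lambda$ into a smooth part $\lambda_y(z)$ and discontinuous parts; the smooth part yields the main term $\rho(u)F_\delta(x)$ (which becomes $\cPhi(\delta x^k,1)$ via~\eqref{eq:psi-rho}), while the oscillatory pieces are estimated through further integration by parts against $F_\delta(z)$, which is where the decay $\CQ^{-1/k}$ is extracted (see~\eqref{eq:bigy-estim-mt}--\eqref{eq:bigy-estim-resid}). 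Replacing $\Lambda$ by the single power law $(t/x)^\alpha$ is exactly the simplification that loses the game. If you want to repair your approach, you would need to keep the full De Bruijn structure of $\Lambda$ rather than collapsing it to a power law, at which point you are essentially rederiving the paper's argument.
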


\begin{proof}
For~$k=1$, this follows from theorems of La Bret{\`e}che~\cite[Proposition~1]{Breteche-sommes} and La Bret{\`e}che-Granville~\cite[Th\'{e}or\`{e}me~4.2]{BG}. It is based on integration by parts and the theorem of Saias~\cite{Saias}, that
\begin{equation}
\Psi(x, y) = \Lambda(x, y) \big\{1 + O(\CY_\ee^{-1})\big\} \qquad ((x, y) \in \eqref{eq:Hee}).\label{eq:psi-lambda-saias}
\end{equation}
Here De Bruijn's function~$\Lambda(x, y)$ (see~\cite{dB}) is defined by
$$ \Lambda(x, y) := x \int_{-\infty}^\infty \rho(u-v) \dd\Big(\frac{\lfloor y^v \rfloor}{y^v}\Big) \qquad (x\not\in\N) $$
and~$\Lambda(x, y) = \Lambda(x+0, y)$ for~$x\in\N$, where~$\rho$ denotes Dickman's function~\cite[section III.5.3]{ITAN}. This implies in particular the theorem of Hildebrand~\cite{Hildebrand}
\begin{equation}\label{eq:psi-rho}
\Psi(x, y) = x\rho(u) \Big\{1 + O\Big(\frac{\log(u+1)}{\log y}\Big)\Big\} \qquad ((x, y)\in \eqref{eq:Hee}).
\end{equation}

For arbitrary~$k$, the arguments transpose almost identically, so we only sketch the proof. We first use Lemma~\ref{lem:adam-smooth-2} to approximate
$$ E_k(x, y ; \delta) = \ssum{x/\CY_\ee < n\leq x\\P(n)\leq y} \e(n^k\delta) + O(\Psi(x, y)/\CY_\ee^\alpha). $$
The error term here is acceptable. We integrate by parts and use~\eqref{eq:psi-lambda-saias} to obtain
\begin{equation}
\begin{aligned}
\ssum{x/\CY_\ee < n\leq x\\P(n)\leq y} \e(n^k\delta)  = \int_{z=x/\CY_\ee+}^{x+} \e(z^k\delta)\dd(\Lambda(z, y)) + O(\Psi(x, y) \CQ \CY_\ee^{-1}).
\end{aligned}\label{eq:bigy-ipp1}
\end{equation}
For~$z\geq 1$, we let~$F_\delta(z) := \int_0^z\e(\delta t^k)\dd t$ and
\[ \lambda_y(z) := \frac{\Lambda(z, y)}{z} + \frac1{\log y} \int_{-\infty}^{\infty}\rho'\Big(\frac{\log z}{\log y}-v\Big)\dd\Big(\frac{\{y^v\}}{y^v}\Big). \]
Note that~$F_\delta(z) = O(z/(1+z|\delta|^{1/k}))$. Using\cite[p.310, first formula]{BG}, we write
\begin{equation}
\int_{z=x/\CY_\ee+}^{x+} \e(z^k\delta)\dd(\Lambda(z, y)) = \int_{x/\CY_\ee}^x \lambda_y(z)F_\delta'(z)\dd z - \int_{x/\CY_\ee}^x zF_\delta'(z)\dd(\{z\}/z).\label{eq:bigy-lambdalambda}
\end{equation}
By integration by parts, the second integral on the right side in~\eqref{eq:bigy-lambdalambda} is
\[ \big[\{z\}F'_\delta(z)\big]_{z=x/\CY_\ee}^x - \int_{x/\CY_\ee}^x\Big(\frac{F'_\delta(z)}z+F''_\delta(z)\Big)\{z\}\dd z = O\big(\log\CY_\ee + |\delta|x^k\big), \]
and the first integral is
\begin{equation}
\int_{x/\CY_\ee}^x \lambda_y(z)F_\delta'(z)\dd z= \lambda_y(x)F_\delta(x) - \lambda_y(x/\CY_\ee)F_\delta(x/\CY_\ee) - \int_{x/\CY_\ee}^x F_\delta(z)\dd(\lambda_y(z)).\label{eq:bigy-ipp2}
\end{equation}
To evaluate this, we use~\cite[formula (2.3)]{BG} and obtain
\begin{equation}
\begin{aligned}
&\lambda_y(x)F_\delta(x) - \lambda_y(x/\CY_\ee)F_\delta(x/\CY_\ee) = \rho(u)F_\delta(x) + O\Big(\frac{\Psi(x, y)}{\CQ^{1/k}}\frac{\log(u+1)}{\log y} + \Psi(x, y)\CY_\ee^{-\alpha}\Big).
\end{aligned}\label{eq:bigy-estim-mt}
\end{equation}
Next, using~\cite[formula (4.16)]{BG} and integration by parts, we obtain
\begin{equation}
\int_{x/\CY_\ee}^x F_\delta(z)\dd(\lambda_y(z)) = O\Big(\rho(u)\frac{\log(u+1)}{\log y}\int_{x/\CY_\ee}^x\frac{|F_\delta(z)|\dd z}z\Big) +\frac1{\log y}\int_{x/\CY_\ee}^x F_\delta(z)\dd\Big(\frac{\{z/y\}}{z/y}\Big).\label{eq:decomp-terme-discont-Fdelta}
\end{equation}
The integral in the error term is bounded by~$x\log(2\CQ)\CQ^{-1/k}$, and partial summation yields
\begin{align*}
\int_{x/\CY_\ee}^x F_\delta(z)\dd\Big(\frac{\{z/y\}}{z/y}\Big) \ll \min\{y\CY_\ee, x\}\CQ^{-1/k} + \log\CY_{\ee} \ll x\rho(u)\big\{\CQ^{-1/k} + \CY_\ee^{-1}\big\}.
\end{align*}
Inserting into~\eqref{eq:decomp-terme-discont-Fdelta}, we obtain
\begin{equation}
\int_{x/\CY_\ee}^x F_\delta(z)\dd(\lambda_y(z)) \ll x\rho(u) \Big\{ \frac{\log(2\CQ)}{\CQ^{1/k}}\frac{\log(u+1)}{\log y} + \CY_\ee^{-1} \Big\}.\label{eq:bigy-estim-resid}
\end{equation}
Combining the estimates~\eqref{eq:bigy-estim-resid},~\eqref{eq:bigy-estim-mt},~\eqref{eq:bigy-ipp2} and~\eqref{eq:bigy-ipp1}, we obtain
\begin{equation}
E_k(x, y ; \vth) = x\rho(u)\Big\{\frac{F_\delta(x)}x + O\Big(\frac{\log(2\CQ)}{\CQ^{1/k}}\frac{\log(u+1)}{\log y} + \CQ\CY_\ee^{-\alpha}\Big)\Big\}.\label{eq:bigy-Ek-1}
\end{equation}
Using~\eqref{eq:psi-rho} and rescaling~$\ee$ completes the argument.
\end{proof}

\subsection{The main term in the neighborhood of~$\vth=0$, for small values of~$y$}\label{sec:main-term-0-small-y}

For smaller values of~$y$, we employ the saddle-point method~\cite{HT} based on exploiting the nice analytic behaviour of the Mellin transform
\[ \zeta(s, y) := \prod_{p\leq y}(1-p^{-s})^{-1} \]
associated with the set of~$y$-friable integers. By Perron's formula,
\[ E_k(x, y ; \delta) = \frac1{2\pi i}\int_{\kappa-i\infty}^{\kappa+i\infty}\zeta(s, y)\cPhi(\delta x^k, s)x^s\frac{\dd s}s \qquad (x\not\in\N), \]
where~$\kappa>0$ is arbitrary. The saddle-point~$\alpha = \alpha(x, y)$, defined in terms of~$x$ and~$y$ by means of the implicit equation~\eqref{eq:def-alpha}, is the unique positive real number~$\sigma$ achieving the infimum~$\inf_{\sigma>0}x^\sigma\zeta(\sigma, y)$. Recall the definition of~$\CT_\ee$ from~\eqref{eq:notations}.

\begin{prop}\label{prop:Ek-smally}
Let~$\ee > 0$ be small and fixed. Let~$\delta \in \R$ and write~$\CQ = 1 + |\delta|x^k$. Then whenever~$x$ and~$y$ satisfy~$(\log x)^{1+\ee}\leq y\leq x$, there holds
\[ E_k(x, y ; \delta) = \Psi(x, y)\Big\{\cPhi(\delta x^k, \alpha) + O\Big(\frac1{\CQ^{\alpha/k-\ee}} \cdot \frac1u + \CQ \CT_\ee^{-c}\Big)\Big\}, \]
for some constant~$c > 0$.
\end{prop}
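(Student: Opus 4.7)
The plan is to apply the Hildebrand--Tenenbaum~\cite{HT} saddle-point method, starting from the Mellin--Perron representation
\[ E_k(x, y ; \delta) = \frac{1}{2\pi i}\int_{(\kappa)} \zeta(s, y)\, \cPhi(\delta x^k, s)\, x^s\, \frac{\dd s}{s} \qquad (x \notin \N), \]
noted in the text just before the statement. First I would shift the contour to the vertical line $\Re(s) = \alpha$. By~\eqref{eq:def-alpha}, $\alpha$ is the saddle point of $\sigma \mapsto \sigma \log x + \log \zeta(\sigma, y)$, so along $s = \alpha + it$ the integrand attains its maximum modulus at $t = 0$, with local Taylor expansion $\log(\zeta(\alpha+it, y)\, x^{\alpha+it}) = \log(\zeta(\alpha,y) x^\alpha) - \tfrac{1}{2}\sigma_2(\alpha, y)\, t^2 + O(\cdots)$ there.

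Next I would split the line into three pieces. On a central window $|t| \leq T_0$, with $T_0$ a logarithmic factor past the Gaussian width $\sigma_2(\alpha, y)^{-1/2}$, I would Taylor-expand $\log(\zeta(\alpha+it, y) x^{\alpha+it}/(\alpha+it))$ to quadratic order, and replace $\cPhi(\delta x^k, \alpha+it)$ by $\cPhi(\delta x^k, \alpha)$, using the derivative bound in Lemma~\ref{lem:phi-check} with $j = 1$ to control the error. The resulting Gaussian integral combines with~\eqref{eq:HT-psixy} and the estimate~\eqref{eq:sigma2} for $\sigma_2$ to produce the main term $\Psi(x, y)\,\cPhi(\delta x^k, \alpha)$ up to a relative error $O(1/u)$; together with the decay $\cPhi(\delta x^k, \alpha) \ll \CQ^{-\alpha/k+\ee}$ from Lemma~\ref{lem:phi-check} (case $j = 0$), this yields the first error term $\CQ^{-\alpha/k+\ee}/u$ of the proposition.

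For the middle range $T_0 < |t| \leq \CT_\ee$ and the tail $|t| > \CT_\ee$, I would appeal to the standard exponential decay estimates for $\zeta(\alpha+it, y)/\zeta(\alpha, y)$ of the kind underlying the proof of~\eqref{eq:HT-psixy} itself. Combined with the uniform bound $\cPhi(\delta x^k, \alpha+it) \ll \CQ^{-\alpha/k+\ee}$, these estimates contribute at most $\Psi(x, y)\,\CQ\,\CT_\ee^{-c}$, matching the second error term in the statement. The main obstacle will be the middle range: one must carefully track the two regimes (moderate $y$ versus moderate $u$) defining $\CT_\ee = \min\{\e^{(\log y)^{3/2-\ee}}, H(u)\}$ and verify that the exponential saving from $\zeta(\alpha+it, y)$ dominates the $\CQ^{\ee}$ loss stemming from the dependence of $\cPhi$ on $\delta x^k$. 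Since the precise form of the saddle-point asymptotic~\eqref{eq:HT-psixy} already encodes this decomposition for $\delta = 0$, the task is essentially to re-run that argument with the bounded modulating factor $\cPhi(\delta x^k, s)$ in place of $1$.
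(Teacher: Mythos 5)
Your plan captures the overall saddle--point spirit, and the central--window analysis you describe (Taylor expansion of $\log\zeta(\alpha+it,y)$ to quadratic order, use of~\eqref{eq:HT-psixy},~\eqref{eq:sigma2}, and Lemma~\ref{lem:phi-check} to extract the main term with relative error $O(1/u)$) is in the right direction. However, there is a genuine gap in how you propose to dispose of the rest of the contour.

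You want to integrate over the full vertical line $\Re(s)=\alpha$ and then treat the ranges $T_0<|t|\leq\CT_\ee$ and $|t|>\CT_\ee$ ``by appealing to the standard exponential decay estimates for $\zeta(\alpha+it,y)/\zeta(\alpha,y)$.'' Such decay does \emph{not} hold uniformly for large $|t|$: as $|t|\to\infty$, $|\zeta(\alpha+it,y)|$ does not go to zero (it is an almost--periodic function of $t$), and moreover $\cPhi(\delta x^k,\alpha+it)\to\e(\delta x^k)$ rather than $0$, so the Perron integrand only decays like $1/|t|$ and the full-line integral is merely conditionally convergent. The Hildebrand--Tenenbaum bounds \cite[Lemma~8.(i)]{HT} that are available give a gain of the form $\exp(-cu)$ only near the saddle (in particular at $|t|\asymp 1/\log y$), and their theory as a whole controls $|t|$ up to roughly $\exp\{(\log y)^{3/2-\ee}\}$; for genuinely large $|t|$ there is no usable pointwise estimate. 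The paper circumvents this entirely by \emph{not} starting from the full Perron line. Instead it first truncates the sum over $n$ at $x/\CT_\ee$ (via Lemma~\ref{lem:adam-smooth-2}), writes $E_k$ as a Stieltjes integral against $\dd\Psi(t,y)$, and plugs in the already-truncated representation of $\Psi(t,y)$ from \cite[Lemma~10]{HT}, which is a \emph{finite} window integral over $[\alpha_t-i/\log y,\alpha_t+i/\log y]$ plus an error $O(\CT_\ee^{-c})$ whose proof already contains the careful tail analysis you are trying to re-do. After a contour shift from $\alpha_t$ to $\alpha$ and a second integration by parts one reaches exactly the window integral $\frac{1}{2\pi i}\int_{\alpha-i/\log y}^{\alpha+i/\log y}\zeta(s,y)\cPhi(\delta x^k,s)x^s\,\dd s/s$, now with the manageable error $O(\Psi(x,y)\,\CQ\,\CT_\ee^{-c})$; only at this point does the local split at $T_0=(u^{1/3}\log y)^{-1}$ and the Taylor expansion (which the paper carries to order $4$, not $2$) enter. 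So the structural move you are missing is: truncate the sum, not the Perron contour, and import the HT error bound rather than prove your own tail estimates.

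Two further omissions. First, the paper handles the range $y>x^{1/(\log\log x)^2}$ separately, where $1-\alpha\ll 1/u$, by reducing to Proposition~\ref{prop:Ek-bigy} via~\eqref{eq:cPhi-1}; your proof sketch does not address this case, and the $\CT_\ee$ bookkeeping is delicate there. Second, the factor $\CQ$ in the error term $\CQ\CT_\ee^{-c}$ arises specifically from the integration by parts against $\e(\delta t^k)$, and this does not appear naturally in your scheme; you would need to explain where a factor $1+|\delta|x^k$ comes from if you bound tails of the contour directly.
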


\begin{proof}
One option is to transpose the arguments of~\cite[Proposition~2.11]{D2014}. Instead we take a simpler route, inspired from a remark of D. Koukoulopoulos. When~$y>x^{1/(\log \log x)^2}$, we have~$1 - \alpha \ll 1/u$ by~\eqref{eq:alpha-1}, and thus the estimate is a consequence of Proposition~\ref{prop:Ek-bigy} since
\begin{equation}\label{eq:cPhi-1} 
\cPhi(\delta x^k,\alpha) - \cPhi(\delta x^k, 1) \ll (1-\alpha) \frac{\log2 \CQ}{\CQ^{\alpha/k}} 
\end{equation}
by Lemma~\ref{lem:phi-check}.

We assume henceforth that~$y\leq x^{1/(\log\log x)^2}$, with the consequence that~$\log x \ll_\ee H(u)^\ee$. Using Lemma~\ref{lem:adam-smooth-2}, we write
\begin{equation}
\begin{aligned}
  E_k(x, y ; \vth) = \int_{x/\CT_\ee}^x\e(\delta t^k)\dd(\Psi(t, y)) + O(\Psi(x, y)\CT_\ee^{-\alpha}).
\label{eq:Ek-IPP-gdy}
\end{aligned}
\end{equation}
Let~$\alpha_t := \alpha(t, y)$ and~$u_t := (\log t)/\log y$. Then for~$t\in[x/\CT_\ee, x]$, by~\cite[Lemma 10]{HT} we have
\[ \Psi(t, y) = \frac1{2\pi i}\int_{\alpha_t-i/\log y}^{\alpha_t+i/\log y}\zeta(s, y)\frac{t^s\dd s}{s} + O\Big(t^{\alpha_t}\zeta(\alpha_t, y)\big\{\e^{-(\log y)^{3/2-\ee}}+H(u_t)^{-c}\big\}\Big). \]
Note that~$\log \CT_\ee \ll u/(\log u)^2$, so that certainly~$u_t = u + O(u/(\log y)) \asymp u$, and thus~$H(u_t)^{-c} \ll H(u)^{-c'}$.  On the other hand, from~\eqref{eq:HT-psixy}, \eqref{eq:sigma2}, and Lemma~\ref{lem:adam-smooth-2} we have
\[ t^{\alpha_t}\zeta(t, y) = O(\Psi(t, y)\log x) = O\Big( \Big(\frac{t}{x}\Big)^{\alpha} \Psi(x,y) \log x \Big) .\]
By our assumption that~$(\log x)^{1+\ee} \leq y\leq x^{1/(\log\log x)^2}$, we can absorb the~$\log x$ factor into the error terms and obtain
\begin{equation}\label{eq:psity} 
\Psi(t, y) = \frac1{2\pi i}\int_{\alpha_t-i/\log y}^{\alpha_t+i/\log y}\zeta(s, y)\frac{t^s\dd s}{s} + O\Big( \Big(\frac{t}{x}\Big)^{\alpha} \Psi(x, y)\CT_\ee^{-c}\Big). 
\end{equation}
We now shift the contour of integration to the line between~$\alpha\pm i/\log y$. For~$t\in[x/\CT_\ee, x]$, by \eqref{eq:sigma2} we have
\[ \sigma_2(\alpha_t, y) \asymp (\log x)\log y \asymp \sigma_2(\alpha, y). \]
By~\cite[Lemma 8.(i)]{HT}, we therefore have
\[ \Big|\frac{\zeta(\alpha + i/\log y, y)}{\zeta(\alpha, y)}\Big| \leq \e^{-c u}. \]
This implies
\begin{equation}\label{eq:psity-main} 
\frac1{2\pi i}\int_{\alpha_t-i/\log y}^{\alpha_t+i/\log y}\zeta(s, y)\frac{t^s\dd s}{s} = \frac1{2\pi i}\int_{\alpha-i/\log y}^{\alpha+i/\log y}\zeta(s, y)\frac{t^s\dd s}{s} + O\Big((\alpha_t-\alpha)\e^{-cu}\frac{t^\alpha\zeta(\alpha, y)}{\alpha}\Big). 
\end{equation}
Here, we have used the bound~$\sup_{\beta\in[\alpha, \alpha_t]}t^{\beta}\zeta(\beta, y)\leq t^{\alpha}\zeta(\alpha, y)$ which follows by unimodality and the definition of the saddle-point. If we view~$\alpha_t$ as a function of~$u_t$, then
\[ \frac{\dd\alpha_t}{\dd u_t} = - \frac{\log y}{\sigma_2(\alpha_t,y)} \]
by the definition of~$\sigma_2$ and the saddle point~$\alpha_t$. It thus follows from \eqref{eq:sigma2} that
\[ \alpha_t - \alpha \leq (u_t - u) \sup_t \frac{\log y}{|\sigma_2(\alpha_t, y)|} \ll \frac{\log\CT_\ee}{\log y} \cdot \frac{1}{\log x}.  \]
Using \eqref{eq:HT-psixy} and \eqref{eq:sigma2} to bound~$\zeta(\alpha,y)$, we deduce
\[ (\alpha_t-\alpha)\e^{-cu}\frac{t^\alpha\zeta(\alpha, y)}{\alpha} \ll \frac{\log\CT_\ee}{\log y} \cdot \frac{1}{\log x}  \cdot \e^{-cu}\Big(\frac{t}{x}\Big)^{\alpha}\Psi(x, y) \log x \ll \Big(\frac tx\Big)^\alpha\Psi(x, y)\CT_\ee^{-c'}.  \]
Inserting this into~\eqref{eq:psity} and~\eqref{eq:psity-main}, we obtain
\[ \Psi(t, y) = \frac1{2\pi i}\int_{\alpha-i/\log y}^{\alpha+i/\log y}\zeta(s, y)\frac{t^s\dd s}{s} + O\Big(\Big(\frac tx\Big)^\alpha\Psi(x, y)\CT_\ee^{-c}\Big). \]
We insert this estimate into~\eqref{eq:Ek-IPP-gdy} and integrate by parts to obtain
\begin{equation}
\label{eq:Ek-petitsy-ap-ipp}
E_k(x, y ; \vth) = \frac1{2\pi i}\int_{\alpha-i/\log y}^{\alpha+i/\log y}\zeta(s, y)\int_{x/\CT_\ee}^{x}\e(\delta t^k)t^{s-1}\dd t\dd s  + O\big(\Psi(x, y)\CQ\CT_\ee^{-c}\big).
\end{equation}
Note that
\[ \int_{x/\CT_\ee}^{x}\e(\delta t^k)t^{s-1}\dd t = \int_0^x\e(\delta t^k)t^{s-1}\dd t + O\big((x/\CT_\ee)^\alpha\big) = \frac{x^s}s\cPhi(\delta x^k,s) + O\big((x/\CT_\ee)^\alpha\big). \]
The contribution to~$E_k(x,y;\vth)$ from the error term~$O\big((x/\CT_\ee)^\alpha\big)$ above is bounded by
$$ \frac{\zeta(\alpha, y)x^\alpha}{(\log y)\CT_\ee^\alpha} \ll \Psi(x, y) \CT_\ee^{-c}. $$
 Therefore,
\[ E_k(x, y ; \delta) = \frac1{2\pi i}\int_{\alpha-i/\log y}^{\alpha+i/\log y}\zeta(s, y)\cPhi(\delta x^k, s)x^s\frac{\dd s}s
+ O(\Psi(x, y) \CQ \CT_\ee^{-c}). \]
The evaluation of the remaining integral can now be done as in~\cite[Proposition~2.11]{D2014} (in particular the treatment of segment~$\CC_4$ on p.623), by splitting the integral depending on the size of the imaginary part of~$s$ relative to~$T_0 := (u^{1/3}\log y)^{-1}$. Large values of~$|\tau|$ are handled using~\cite[Lemma~8.(i)]{HT}, while the contribution of small values of~$|t|$ is estimated by a Taylor formula at order~$4$. After some routine calculations, we find
\[ \frac1{2\pi i}\int_{\alpha-i/\log y}^{\alpha+i/\log y}\zeta(s, y)\cPhi(\delta x^k, s)x^s\frac{\dd s}s = \Psi(x, y)\cPhi(\delta x^k, \alpha) + O\Big(\frac{\Psi(x, y)}{\CQ^{\alpha/k-\ee}} \cdot \frac1{u}\Big). \]
This concludes the proof.
\end{proof}

\subsection{The main term for general major arcs}\label{sec:main-term-general}

In this section we estimate the main term~$M_k(x, y ; \vth)$ (defined in \eqref{Mk}) in all of the major arcs, using the estimates proved in the previous two sections. This mirrors analogous calculations in~\cite[Section A.2]{Harper}. We recall the notations in \eqref{eq:notations}.

\begin{prop}\label{prop:estim-Mk}
Let~$\ee>0$ be small and fixed. Let~$2\leq y\leq x$ be large, and let~$\vth=a/q + \delta$ with~$0 \leq a < q\leq Y^\eta$ for some sufficiently small~$\eta > 0$ and~$(a,q) = 1$. Write~$\CQ = q(1 + |\delta|x^k)$.
\begin{enumerate}
\item Whenever~$x$ and~$y$ satisfy~\eqref{eq:Hee}, we have
\[
\frac{M_k(x, y ; \vth)}{\Psi(x,y)} = \cPhi(\delta x^k, 1)H_{a/q}(1) + O\Big(\frac{q^{1-\alpha}}{\CQ^{1/k-\ee}} \cdot \frac{\log(u+1)}{\log y} + \CQ \CY_\ee^{-1}\Big). \]
\item Whenever~$x$ and~$y$ satisfy~$(\log x)^{1+\ee}\leq y\leq x$, we have
\[
\frac{M_k(x, y ; \vth)}{\Psi(x,y)} = \cPhi(\delta x^k, \alpha)H_{a/q}(\alpha) + O\Big(\frac{q^{1-\alpha}}{\CQ^{\alpha/k-\ee}} \cdot \frac{1}{u} + \CQ \CT_\ee^{-c}\Big),
\]
for some constant~$c > 0$.
\end{enumerate}
\end{prop}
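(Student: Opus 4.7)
The plan is to expand the definition~\eqref{Mk} of $M_k(x, y; \vth)$ and apply Propositions~\ref{prop:Ek-bigy} and~\ref{prop:Ek-smally} to each summand $E_k(x/D, y; D^k\delta)$, where $D := d_1 d_2$ ranges over divisors of $q$ with $P(D)\leq y$. The key algebraic observation is the identity $D^k\delta \cdot (x/D)^k = \delta x^k$, so that the singular integral produced by each summand has the same first argument $\delta x^k$, independent of $D$; only its second argument varies, being either $1$ in case~(1) or $\alpha_D := \alpha(x/D, y)$ in case~(2). Note also that the $\CQ$ relevant to Proposition~\ref{prop:Ek-bigy}/\ref{prop:Ek-smally} becomes $\CQ_D := 1 + |\delta|x^k = \CQ/q$.

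Two auxiliary approximations are needed. The first is
\[ \frac{\Psi(x/D, y)}{\Psi(x, y)} = D^{-s}(1 + o(1)) \qquad (D\leq q\leq Y^\eta), \]
with $s = \alpha$ in case~(2) and $s = 1$ in case~(1); this I obtain in both cases from the Hildebrand--Tenenbaum formula~\eqref{eq:HT-psixy}, exploiting that the saddle-point shift $|\alpha_D - \alpha| \ll \log D/\sigma_2(\alpha, y)$ pairs with the relation $\zeta'/\zeta(\alpha, y) = -\log x$ to cancel the linear Taylor term of $\log\zeta(\sigma, y)$ about $\sigma = \alpha$, leaving a quadratic remainder of order $(\log D)^2/\sigma_2 \ll \eta^2/u$. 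In case~(1), replacing $D^{-\alpha}$ by $D^{-1}$ introduces a further error $O((1-\alpha)\log D) = O(\eta\log(u+1))$ using~\eqref{eq:alpha-1}. The second approximation, needed only in case~(2) to align the $\cPhi$-arguments, is
\[ \cPhi(\delta x^k, \alpha_D) = \cPhi(\delta x^k, \alpha) + O\bigl(|\alpha - \alpha_D|\cdot\log(2\CQ_D)/\CQ_D^{\alpha/k}\bigr), \]
which is immediate from Lemma~\ref{lem:phi-check}.

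With these in place, the main term assembles directly via the definition~\eqref{eq:def-Hq}:
\[ \sum_{\substack{d_1d_2\mid q\\P(d_1d_2)\leq y}} \frac{\mu(d_2)}{D^s\vphi(q/d_1)}\sum_{\substack{b\mod q\\(b,q)=d_1}}\e\Bigl(\frac{ab^k}{q}\Bigr) = H_{a/q}(s). \]
For the error I combine Lemma~\ref{lem:adam-smooth-2} (which gives $\Psi(x/D, y)\ll D^{-\alpha}\Psi(x, y)$), the per-term error from Proposition~\ref{prop:Ek-bigy}/\ref{prop:Ek-smally} evaluated at $\CQ_D = \CQ/q$, and the trivial bound $\vphi(q/d_1)$ on the inner sum over $b$. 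Summing over divisor pairs of $q$ and rewriting $\CQ_D^{-1/k+\ee} = q^{1/k-\ee}\CQ^{-1/k+\ee}$ (respectively with $\alpha/k$ in case~(2)), and absorbing divisor-type factors into the $\ee$ implicit in the exponent of $q$, one recovers the advertised error.

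The main obstacle is ensuring that all the multiplicative errors in the two approximations remain smaller than the main term $H_{a/q}(s)\cPhi(\delta x^k, s)$, which by Lemma~\ref{lem:Haq-bound} may itself be as small as $q^{-\alpha/k+\ee}\CQ_D^{-\alpha/k}$. This is exactly why the hypothesis $q\leq Y^\eta$ with a small explicit $\eta$ is imposed, and why the Taylor expansion of $\zeta(\sigma, y)$ around the saddle point must be carried to second order instead of simply applying the upper bound of Lemma~\ref{lem:adam-smooth-2}.
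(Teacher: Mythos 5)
Your overall plan matches the paper's: expand $M_k$ via~\eqref{Mk}, apply Propositions~\ref{prop:Ek-bigy} or~\ref{prop:Ek-smally} termwise using the observation that $(d_1d_2)^k\delta \cdot (x/(d_1d_2))^k = \delta x^k$, approximate $\Psi(x/D,y) \approx D^{-\alpha}\Psi(x,y)$, and recognise the sum as $H_{a/q}$ (the paper cites \cite[Th\'{e}or\`{e}me~2.4]{BT2005} for the $\Psi$-ratio rather than redoing the saddle-point Taylor expansion, but that is the same ingredient; likewise, moving from $D^{-\alpha}$ to $D^{-1}$ termwise instead of from $H_{a/q}(\alpha)$ to $H_{a/q}(1)$ at the end is cosmetic).

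However, your error estimate has a genuine gap. You bound the inner sum over $b$ trivially by $\varphi(q/d_1)$, which cancels the $1/\varphi(q/d_1)$ in~\eqref{Mk} and produces, after summing over $D = d_1d_2 \mid q$ and writing $\CQ_D^{-1/k+\ee} = q^{1/k-\ee}\CQ^{-1/k+\ee}$, an error of size $q^{1/k+\ee}\CQ^{-1/k+\ee}\cdot\log(u+1)/\log y$. This is not the advertised $q^{1-\alpha}\CQ^{-1/k+\ee}\cdot\log(u+1)/\log y$: in the regime of interest $1-\alpha$ is much smaller than $1/k$, so $q^{1/k} \gg q^{1-\alpha}$, and $q^{1/k}\CQ^{-1/k}$ is merely $O(1)$ (since $q \leq \CQ$) rather than a decaying power of $\CQ$. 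This is not a removable divisor-type factor. The decay $\CQ^{-1/k+O(1-\alpha)+\ee}$ in Theorem~\ref{thm:estim-majo} is essential in the Waring major-arc analysis (Lemma~\ref{lem:waring-major}), where it controls the error raised to the $s$-th power after integrating in $\delta$ and summing over $q$; without it the error contribution diverges. The missing ingredient is Lemma~\ref{lemme:gauss}, which gives the nontrivial bound $|G_k(q/d_1, ad_1^{k-1}, \chi_0)| \ll q^{1-1/k+\ee}$; inserting this into the error sum (as the paper does when estimating $\CR$) saves the crucial factor $q^{1/k}$ and produces the stated bound $q^{1-\alpha-1/k+\ee}$ for the sum $\CR$, hence $q^{1-\alpha}\CQ^{-1/k+\ee}$ for the error.
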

\begin{proof}
We only give the details of deducing the first part of the statement from~\eqref{Mk} and Proposition~\ref{prop:Ek-bigy}; the proof of the second part is similar, using Proposition~\ref{prop:Ek-smally} instead. Write~$\CQ' = 1 + |\delta|x^k$ so that~$\CQ = q \CQ'$.
Since~$q\leq Y^\eta$, we have~$\log(x/q)\asymp \log x$, so that for each~$d_1,d_2$ with~$d_1d_2 \mid q$ and~$P(d_1d_2) \leq y$, we can apply Proposition~\ref{prop:Ek-bigy} and obtain
\[
E_k(x/(d_1d_2), y ; (d_1d_2)^k\delta) = \Psi\Big(\frac{x}{d_1d_2}, y\Big)\Big\{ \cPhi(\delta x^k, 1)
+ O\Big(\frac{1}{\CQ'^{1/k-\ee}} \cdot \frac{\log(u+1)}{\log y} + \CQ' \CY_\ee^{-1} \Big) \Big\}.
\]
By~\cite[Th{\'e}or{\`e}me~2.4]{BT2005} we have, uniformly for~$d_1d_2\leq q\leq y^\eta$,
\[ \Psi\Big(\frac{x}{d_1d_2}, y\Big) = \frac{\Psi(x, y)}{(d_1d_2)^\alpha}\Big(1 + O\Big((\log q)\frac{\log(u+1)}{\log y}\Big)\Big). \]
Combining this with the bounds~$\Psi(x/(d_1d_2), y) \ll (d_1d_2)^{-\alpha} \Psi(x, y)$ from Lemma~\ref{lem:adam-smooth-2} and~$\cPhi(\delta x^k, 1) \ll (1+|\delta|x^k)^{-1/k}$ from Lemma~\ref{lem:phi-check}, we deduce
\[
E_k(x/(d_1d_2), y ; (d_1d_2)^k\delta) = \frac{\Psi(x, y)}{(d_1d_2)^\alpha}\Big\{\cPhi(\delta x^k, 1)
+ O\Big(\frac{\log q}{\CQ'^{1/k-\ee}} \cdot \frac{\log(u+1)}{\log y} + \CQ' \CY_\ee^{-1}\Big)\Big\}.
\]
Inserting this estimate into~\eqref{Mk} and recalling the definition of~$H_{a/q}(\alpha)$ in~\eqref{eq:def-Hq}, we obtain
\begin{equation}\label{eq:Mk-final}
\frac{M_k(x, y ;\vth)}{\Psi(x,y)} = \cPhi(\delta x^k, 1)H_{a/q}(\alpha) + O\Big(\Big(\frac{1}{\CQ'^{1/k-\ee}} \cdot \frac{\log(u+1)}{\log y} + \CQ' \CY_\ee^{-1}\Big) \CR \Big),
\end{equation}
where
\[ \CR := \sum_{\substack{d_1d_2 \mid q \\ P(d_1d_2) \leq y}}\frac{(\log q)}{(d_1d_2)^\alpha\vphi(q/d_1)}|G_k(q/d_1, ad_1^{k-1}, \chi_0)|. \]
Using the bound~\eqref{eq:borne-Gk-nontv} with~$q_1 = 1$ (a consequence of Lemma~\ref{lemme:gauss}) to bound the Gauss sum~$G_k(q/d_1, ad_1^{k-1}, \chi_0)$ above by~$q^{1-1/k+\ee}$, we obtain
\begin{equation}
\CR \ll q^{1-1/k+\ee}\sum_{d_1d_2|q}\frac{(d_1d_2)^{-\alpha}}{\vphi(q/d_1)} \ll q^{1-\alpha-1/k+\ee}.\label{eq:cR}
\end{equation}
Finally, to see that~$H_{a/q}(\alpha)$ is close to~$H_{a/q}(1)$, note that the derivative~$H_{a/q}'$ satisfies the bound~$H_{a/q}'(\sigma)=O(q^\ee\CR)$ for all~$\sigma\in[\alpha, 1]$. Thus from~\eqref{eq:alpha-1}  we obtain
\begin{equation}
H_{a/q}(\alpha) = H_{a/q}(1) + O(q^\ee\CR\log(u+1)/\log y).\label{eq:Haq-1}
\end{equation}
In view of \eqref{eq:cR} and Lemma~\ref{lem:phi-check}, we may replace~$H_{a/q}(\alpha)$ in \eqref{eq:Mk-final} by~$H_{a/q}(1)$ at the cost of an acceptable error. This completes the proof.
\end{proof}

\subsection{Deduction of Theorem~\ref{thm:estim-majo}}

Let the situation be as in the statement of Theorem~\ref{thm:estim-majo}. By choosing~$C$ large enough, we may assume that the hypotheses of Propositions~\ref{prop:Ek-Mk} and~\ref{prop:estim-Mk} are satisfied, and moreover that the error term in~\eqref{eq:Ek-Mk} is acceptable. We divide into two cases depending on whether to apply the first or the second part of Proposition~\ref{prop:estim-Mk}.

Assume first that~$\e^{\sqrt{\log x \log\log x}} \leq y$. Then~$1/u \gg \log(u+1)/\log y$ and~$\log x \ll \CY_\ee^{o(1)}$, so that the error term in the first part of Proposition~\ref{prop:estim-Mk} is acceptably small. To see that we may replace~$\cPhi(\delta x^k,1) H_{a/q}(1)$ by~$\cPhi(\delta x^k,\alpha) H_{a/q}(\alpha)$, note that by~\eqref{eq:cPhi-1} and~\eqref{eq:Haq-1} again, we have
$$ \cPhi(\delta x^k, 1)H_{a/q}(1) = \cPhi(\delta x^k, \alpha)H_{a/q}(\alpha) + O\Big(\frac{q^{1-\alpha}}{\CQ^{\alpha/k-\ee}} \cdot \frac{\log(u+1)}{\log y}\Big). $$
This error term is again acceptable.

Assume next that~$(\log x)^{CA} \leq y \leq \e^{\sqrt{\log x\log \log x}}$. Then~$1/u \ll \log(u+1)/\log y$ and~$\log x \ll \CT_\ee^{o(1)}$, so that the error term in the second part of Proposition~\ref{prop:estim-Mk} is acceptably small, and the conclusion follows.

Finally, the upper bound~\eqref{eq:arcmaj-majoration} follows from Lemmas~\ref{lem:phi-check} and~\ref{lem:Haq-bound}.

\section{Minor arc estimates}\label{sec:minor}

The goal of this section is to prove Theorem~\ref{thm:estim-minor}. It is convenient to prove the following equivalent form. For parameters~$Q, X\geq 1$ and~$0\leq a \leq q\leq Q$ with~$(a,q)=1$, define
\[ \major(q,a;Q,X) = \{ \vth \in [0,1]: |q\vth -a | \leq QX^{-k} \}, \]
and
\begin{equation}
\major(Q,X) := \bigcup_{\substack{0 \leq a < q \leq Q \\ (a, q)=1}} \major(q, a ; Q, X).\label{eq:def-majorarcs}
\end{equation}
In particular, for any~$\vth = a/q+\delta$ with~$0 \leq a \leq q$ and~$(a,q) = 1$, we must have~$q(1+|\delta| X^k) \geq Q$ whenever~$\vth \notin \major(Q,X)$. Note also that we have the obvious inclusion~$\major(Q_1, X) \subset \major(Q_2, X)$ whenever~$Q_1 \leq Q_2$. 

\begin{prop}\label{prop:estim-minor}
Fix a positive integer~$k$.  There exists~$K = K(k) > 0$ and~$c = c(k) > 0$ such that the following statement holds. Let~$2 \leq y \leq x$ be large with~$y \geq (\log x)^K$. If~$\vth\in[0, 1]\setminus\major(Q, x)$ for some~$Q \geq 1$, then
$$ E_k(x, y ; \vth) \ll \Psi(x, y) Q^{-c}. $$
\end{prop}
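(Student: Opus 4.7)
The plan is to generalize Harper's proof~\cite{Harper} of the linear case $k=1$, with Weyl differencing to handle the degree. Assume for contradiction that $|E_k(x,y;\vth)| > \Psi(x,y)Q^{-c}$ while $\vth\notin\major(Q,x)$; the aim is to produce a rational approximation to $\vth$ forcing $\vth\in\major(Q,x)$, a contradiction.

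\emph{Bilinear reduction.} Since every $n\in S(x,y)$ admits $\Omega(n)\asymp u$ prime factorizations $n=p^{a}m$ with $p\leq y$, the identity
\[ \Omega(n)\,\1_{S(x,y)}(n) \;=\; \sum_{p\leq y}\sum_{a\geq 1}\1_{p^{a}\mid n}\,\1_{S(x,y)}(n) \]
allows one to rewrite $u\cdot E_k(x,y;\vth)$, up to negligible contributions from $a\geq 2$, as a sum of bilinear exponential sums
\[ T(P) \;:=\; \sum_{P<p\leq 2P}\,\sum_{m\in S(x/p,y)} \e\bigl(p^{k} m^{k}\vth\bigr), \]
one for each dyadic $P\leq y$. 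It thus suffices to prove $|T(P)|\ll \Psi(x,y)\,u\,(\log x)^{-1} Q^{-c}$ uniformly in $P$.

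\emph{Weyl differencing and diophantine extraction.} Applying Cauchy--Schwarz to the $p$-sum in $T(P)$ and then iterating the van der Corput/Weyl differencing procedure $k-1$ times on the inner $m$-sum reduces the exponential phase to a linear expression in the innermost variable, with leading coefficient $k!\,h_1\cdots h_{k-1}\,p^{k}\vth$. The friability constraint on $m$ is preserved, while the additional constraints $m+h_i\in S(x/p,y)$ on the shifted variables are handled via the short-interval bound of Lemma~\ref{lem:adam-smooth-3}. Failure of the desired estimate forces unusually many solutions to
\[ \bigl\|\,k!\,h_1\cdots h_{k-1}\,p^{k}\,m\,\vth\,\bigr\| \;\leq\; \varepsilon, \qquad \varepsilon\asymp Q^{-c'}, \]
with the variables in their natural dyadic and friable ranges. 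Pigeonholing on $p,h_1,\ldots,h_{k-1}$ isolates an auxiliary parameter configuration witnessing the excess. Applying Lemma~\ref{lem:vinogradov} with $k=1$ (after a preliminary Dirichlet reduction to bring $\vth':=k!\,h_1\cdots h_{k-1}\,p^{k}\vth$ near $0$ modulo $1$) to the slice $A=S(x/p,y)\cap[M,2M]$ -- whose distribution in arithmetic progressions is controlled by Lemma~\ref{lem:adam-smooth-3} with $\Delta=(\log x)^{O(1)}$ -- yields an integer $q'\ll Q^{O(c)}$ with $\|q'\vth'\|\ll Q^{O(c)}/x$. Clearing the multipliers, using $p^{k}\leq y^{k}=(\log x)^{O(1)}$ and a dyadic pigeonhole over the magnitudes of the $h_i$, gives a rational approximation to $\vth$ itself with denominator $\ll Q^{O(c)}$ and error $\ll Q^{O(c)}/x^{k}$, placing $\vth\in\major(Q,x)$ once $c$ is small enough in terms of $k$.

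The \emph{main obstacle} is the bookkeeping of the friability constraints through the Weyl differencing: each differencing multiplies the number of variables that must remain $y$-friable, and one must carefully balance which constraints to retain (for equidistribution via Lemma~\ref{lem:adam-smooth-3}) against which to drop for interval bounds. This is precisely where the sparse Vinogradov-type Lemma~\ref{lem:vinogradov} becomes crucial: it absorbs a factor $(\log x)^{O(1)}$ in $\Delta$ that encodes the non-uniformity of friable integers on short progressions, a loss that can be compensated in the final exponent $c=c(k)$ but which the uniform variant Lemma~\ref{lem:vinogradov-polynomial} could not accommodate against a set as sparse as $S(x,y)$ when $y=(\log x)^{K}$.
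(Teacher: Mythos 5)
Your proposal diverges substantially from the paper's argument, and the central steps don't work for the sparse regime $y=(\log x)^K$.

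The key structural problem is the direction of the bilinear decomposition. Factoring out a single prime $p$ via the $\Omega(n)$ identity leaves the \emph{large} friable cofactor $m \in S(x/p,y)$. Iterating Weyl differencing $k-1$ times on that sum then forces you to either (a) retain the friability constraints on all the shifted variables $m, m+h_1, m+h_1+h_2, \dots$, producing an intersection set that is not merely hard to bookkeep but too sparse and too poorly distributed to feed into Lemma~\ref{lem:vinogradov} with any reasonable $\Delta$, or (b) drop friability entirely after one Cauchy--Schwarz, in which case the Weyl gain $(x/p)^{-\sigma}$ from Lemma~\ref{lemme:Weyl-qpetit} is swamped by the trivial loss $(x/p)^{1-\alpha}$ incurred by replacing $\Psi(x/p,y)$ with $x/p$. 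Since $1-\alpha$ is bounded away from $0$ when $y$ is a fixed power of $\log x$, this loss cannot be ``compensated in the final exponent $c=c(k)$'' as you assert; it kills the estimate outright.

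The paper's Proposition~\ref{estim-verysmooth-minor} sidesteps this by factoring out a block $m$ of the \emph{smallest} prime factors of $n$, chosen to have size $M \asymp y\CQ$ -- deliberately small. After Cauchy--Schwarz (which doubles the large cofactor $n$ rather than $m$), friability on $m$ is discarded, but now the loss is only $M^{1-\alpha} \ll (y\CQ)^{1-\alpha} \ll (\log x)^2\CQ^{2(1-\alpha)}$, which is negligible relative to the Weyl gain $\CQ^{-\sigma}$ once $1-\alpha$ is small. The square is expanded, the order of summation swapped, and the resulting inner $m$-sum is a \emph{complete} Weyl sum to which Lemma~\ref{lemme:Weyl-qpetit} applies directly -- no iterated differencing on friable variables, and no appeal to Lemma~\ref{lem:vinogradov} (which the paper uses only in Section~\ref{sec:proof-large} for the mean value estimates, not here). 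You appear to have transplanted the Erd\H{o}s--Tur\'an/Vinogradov ``diophantine extraction'' machinery from Section~\ref{sec:proof-large-1} into a setting where it doesn't apply, because there the inner sum over a complete range makes the recurrence hypothesis checkable, whereas in your setup the inner sum is itself a friable Weyl sum.

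A smaller but real omission: Proposition~\ref{estim-verysmooth-minor} only covers the regime $4y^2\CQ^3 \leq x$. The full Proposition~\ref{prop:estim-minor} requires combining it with Wooley's minor arc estimate for mildly friable numbers (Proposition~\ref{prop:weyl-smooth-wooley}, when $\vth\notin\major(x^{0.1},x)$) and with an inductive factoring argument for $y\geq x^{\eta}$ (Proposition~\ref{prop:weyl-large-y}). Your outline addresses none of these cases.
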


\begin{proof}[Proof that Proposition~\ref{prop:estim-minor} implies Theorem~\ref{thm:estim-minor}]
We may assume that~$10 \leq q \leq 0.1 x^k$, since otherwise the claim is trivial. Let~$Q = (1/3)\min(q, \sqrt{x^k/q})$. In view of Theorem~\ref{thm:estim-minor}, it suffices to show that~$\vth \notin \major(Q, x)$. Suppose, on the contrary, that~$\vth = a'/q' + \delta$ for some~$0 \leq a' \leq q' \leq Q$ with~$(a',q') = 1$, and~$|\delta| \leq Qx^{-k}$. Then by our choice of~$Q$ we have
\[ q \geq 3Q \geq 3q', \ \ \frac{Q}{x^k} \leq \frac{1}{9qQ} \leq \frac{1}{9qq'}. \]
Hence
\[ \left| \frac{a}{q} - \frac{a'}{q'} \right| \leq \frac{1}{q^2} + |\delta| \leq \frac{1}{3qq'} + \frac{1}{9qq'} < \frac{1}{qq'}.   \]
It follows that~$a=a'$ and~$q=q'$, but this is impossible since~$q \geq 3Q$ and~$q' \leq Q$.
\end{proof}

The bulk of the proof of Proposition~\ref{prop:estim-minor} lies in Section~\ref{sec:minor-small-y}, which applies when~$y = (\log x)^K$ for some constant~$K$. In Sections~\ref{sec:minor-complete} and~\ref{sec:minor-large-y}, we quote and prove some complimentary results valid for larger~$y$.

\subsection{Estimates for complete Weyl sums}\label{sec:minor-complete}

We start with the following estimate for complete Weyl sums.

\begin{lemma}\label{lemme:Weyl-qpetit}
Fix a positive integer~$k$. Let~$x$ be large, and let~$\vth = a/q + \delta$ for some~$0 \leq a \leq q$ and~$(a,q) = 1$. Assume that~$|\delta|\leq 1/(qx)$, and write~$\CQ = q(1 + |\delta|x^k)$. Then
\[
\Big|\sum_{n\leq x} \e(\vth n^k)\Big| \ll x\Big(\frac1x + \frac q{x^k}+ \frac{1}{\CQ} \Big)^{\sigma(k)}
\]
for some~$\sigma(k)>0$.
\end{lemma}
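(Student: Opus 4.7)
The plan is to prove this sharpened Weyl inequality by combining the classical Weyl inequality with a well-chosen rational approximation of~$\vth$. Recall the classical bound (e.g.\ Vaughan, \emph{The Hardy--Littlewood Method}, Lemma~2.4): if $(a_0, q_0)=1$ and $|\vth - a_0/q_0| \leq 1/q_0^2$, then
$$\Bigl|\sum_{n \leq x} \e(\vth n^k)\Bigr| \ll x^{1+\ee}\bigl( q_0^{-1} + x^{-1} + q_0 x^{-k} \bigr)^{2^{1-k}}.$$

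A direct application with $(a_0, q_0) = (a, q)$ already handles two sub-cases: (i) when $|\delta|x^k \leq 1$, so that $\CQ \asymp q$ and the $q^{-1}$ term matches $\CQ^{-1}$ up to constants; and (ii) when $q$ is large, roughly $q \geq x^{k/2}$, so that $q^{-1} \leq qx^{-k}$ and the bound is controlled by $q/x^k$ regardless of $\CQ$. In both regimes the resulting bound is majorised by $x(x^{-1} + q/x^k + 1/\CQ)^{2^{1-k}-\ee}$, as required.

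The substantive case is $|\delta|x^k > 1$ and $q \leq x^{k/2}$. Here I would apply Dirichlet's approximation theorem at scale $X := \lfloor x^k/(C\CQ) \rfloor$ for a sufficiently large constant $C = C(k)$, obtaining a coprime pair $(a', q')$ with $1 \leq q' \leq X$ and $|q'\vth - a'| \leq 1/X$. The hypothesis $|\delta|x^k > 1$ then forces $a'/q' \neq a/q$: if they coincided then $q|\delta| = |q\vth - a| \leq 1/X$, which contradicts $|\delta|x^k > 1$ provided $C$ is large enough. The triangle inequality
$$\frac{1}{qq'} \leq \Bigl|\frac{a}{q} - \frac{a'}{q'}\Bigr| \leq |\delta| + \frac{1}{q'X},$$
combined with $|\delta| \asymp \CQ/(qx^k)$ in this regime, then pins down $q' \asymp x^k/\CQ$.

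Applying classical Weyl with $(a_0, q_0) = (a', q')$ yields
$$\Bigl|\sum_{n \leq x} \e(\vth n^k)\Bigr| \ll x^{1+\ee}\bigl( \CQ/x^k + x^{-1} + 1/\CQ \bigr)^{2^{1-k}}.$$
Since $\CQ/x^k = q/x^k + q|\delta|$ and $q|\delta| \leq 1/x$ by hypothesis, the first term absorbs into $q/x^k + 1/x$, giving the required bound with $\sigma(k) = 2^{1-k} - \ee$ for any $\ee > 0$ (the $x^{\ee}$ factor being swallowed by a slight decrease of the exponent). The main technical obstacle is the triangle-inequality step pinning down $q' \asymp x^k/\CQ$, which is elementary but requires calibrating~$C$ so that $a'/q'$ is guaranteed to differ from $a/q$ and the lower bound $q' \gg 1/(q|\delta|)$ holds unconditionally in the target regime.
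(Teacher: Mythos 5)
Your overall strategy (classical Weyl plus Dirichlet approximation plus a triangle-inequality argument to pin down a better denominator) is a legitimate alternative to the paper's route, which simply invokes \cite[Lemma 4.4]{GT-manifold}, the ``contrapositive'' form of Weyl: if $|\sum_{n\le x}\e(\vth n^k)|\ge \delta_0 x$ then there is an integer $d\ll\delta_0^{-O(1)}$ with $\|d\vth\|\ll\delta_0^{-O(1)}x^{-k}$. The paper then sets $D=0.1\min(x,x^k/q,\CQ)$, assumes the stated bound fails, obtains $d\le D$ with $\|d\vth\|\le D/x^k$, and derives a contradiction by splitting on whether $q\mid d$.

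However, as written your argument has a genuine error in the main case. You take $X=\lfloor x^k/(C\CQ)\rfloor$ with $C$ \emph{large}, so that $X$ is \emph{small} and $1/X\ge C\CQ/x^k$ is \emph{large}. The Dirichlet conclusion $|q'\vth-a'|\le 1/X$ is therefore \emph{weaker} as $C$ grows, and the claimed contradiction does not exist: if $a'/q'=a/q$ then $q|\delta|\le\CQ/x^k\le C\CQ/x^k\le 1/X$ holds automatically. In the regime $|\delta|x^k>1$ one always has $\CQ/(2x^k)\le q|\delta|\le\CQ/x^k$, so there is no tension with $1/X\ge C\CQ/x^k$ whatsoever. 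The triangle-inequality step is similarly affected: with your $X$ one only gets $q'\le X\ll x^k/(C\CQ)$, which is much \emph{smaller} than $x^k/\CQ$, so the conclusion $q'\asymp x^k/\CQ$ does not follow.

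The fix is to invert the scale: take $X$ \emph{large}, e.g. $X=2\max\bigl(q,\lceil x^k/\CQ\rceil\bigr)$. Then Dirichlet gives $(a',q')$ with $q'\le X$ and $|q'\vth-a'|<1/X\le\CQ/(2x^k)$; if $a'/q'=a/q$ this would force $q|\delta|<\CQ/(2x^k)$, contradicting $q|\delta|\ge\CQ/(2x^k)$, so indeed $a'/q'\neq a/q$. The triangle inequality $1/(qq')\le|\delta|+1/(q'X)$, together with $X\ge 2q$, then yields $q'\ge 1/(2q|\delta|)\ge x^k/(2\CQ)$, while the upper bound $q'\le X\le 2q+2x^k/\CQ$ keeps $q'/x^k\ll q/x^k+1/\CQ$; classical Weyl applied at $a'/q'$ (whose approximation error is $<1/(q'X)\le 1/q'^2$) then delivers the stated bound. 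You should also note that your case (ii), applying classical Weyl at $(a,q)$ when $q\ge x^{k/2}$, needs $|\delta|\le 1/q^2$; the hypothesis only gives $|\delta|\le 1/(qx)$, so this is guaranteed only for $q\le x$. For $k\ge 3$ the range $x<q\le x^{k/2}$ is nonempty and needs a separate word, though in that regime the inner sum is short relative to $q$ and the issue is easily resolved.
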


Compared with classical estimates, the bound here decays not only with~$q$ but also with~$\delta$. This will be necessary in the proof of Proposition~\ref{estim-verysmooth-minor} below. The extra dependence on~$\delta$ can be easily obtained by following the standard Weyl differencing argument, which was done in \cite[Lemma 4.4]{GT-manifold}. In fact, Lemma~\ref{lemme:Weyl-qpetit} is nothing but a reformulation of \cite[Lemma 4.4]{GT-manifold}.

\begin{proof}
Let~$D = 0.1 \min(x, x^k/q, \CQ)$. If the desired exponential sum estimate fails, then by \cite[Lemma 4.4]{GT-manifold}, there is a positive integer~$d \leq D$ such that~$\|d \vth\| \leq D/x^k$.  By the choice of~$D$ and the assumption on~$\delta$, we have 
\[ |d \delta| \leq D|\delta| \leq 0.1 x |\delta| < 1/(2q). \]
In the case when~$q \nmid d$, we have
\[ \|d \vth\| \geq 1/q - |d \delta| > 1/(2q) > D/x^k, \]
where the last inequality follows again from the choice of~$D$. This is a contradiction.
In the case when~$q \mid d$, we have~$\|d \vth\| = |d \delta|$ and~$d \geq q$. This is again a contradiction since~$|d \delta| \geq |q \delta| > D/x^k$ by the choice of~$D$ and the definition of~$\CQ$.
\end{proof}

\begin{remark}
To get a better exponent~$\sigma(k)$ in the statement above, one should follow Vaughan's treatment \cite[Section 5]{Vaughan97} using works on the Vinogradov main conjecture, which has recently been proved (trivial for~$k = 1, 2$, in the case~$k=3$ by Wooley~\cite{Wooley-cubic}, and for all~$k>3$ by Bourgain--Demeter--Guth~\cite{BDG-Decoupling}). We will not pursue this further.
\end{remark}

\subsection{Friable Weyl sums, for large values of~$y$}\label{sec:minor-large-y}

The following minor arcs estimate due to Wooley~\cite[Theorem~4.2]{Woo95} is useful for mildly friable numbers.

\begin{prop}
\label{prop:weyl-smooth-wooley}
Fix a positive integer~$k$ and some~$\lambda\in(0, 1]$. There exist~$\eta, \sigma>0$, depending on~$k$ and~$\lambda$, such that the following holds. Let~$2 \leq y \leq x$ be large with~$y \leq x^{\eta}$, and let~$\vth \in [0,1] \setminus \major(x^{\lambda}, x)$. Then~$E_k(x,y; \vth) \ll x^{1-\sigma}$.
\end{prop}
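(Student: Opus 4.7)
The plan is to implement Wooley's smooth-Weyl-sum machinery~\cite{Woo95}, which improves on classical Weyl differencing by exploiting the flexibility offered by the many factorizations available to $y$-friable integers.

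\emph{Step 1: smooth factorization.} Fix a parameter $P$ that is a small fixed power of $x$ with $P \leq y$. Decompose each $n \in S(x,y)$ as $n = pm$ where $p$ is a prime factor of $n$ in the dyadic range $[P, 2P]$ and $m$ is $y$-friable. By the assumption $y \leq x^{\eta}$ (with $\eta$ small), only a negligible proportion of $n \in S(x,y)$ fail to admit such a factorization, since these can be counted using Lemma~\ref{lem:adam-smooth-2}. After accounting for the multiplicity of the factorization, we reduce to bounding a bilinear sum of the form
\[ \sum_{P < p \leq 2P} \sum_{m \in S(x/p, y)} w(m,p)\, \e(\vth (pm)^k) \]
for some bounded weight $w$.

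\emph{Step 2: Weyl differencing with smooth variables.} Apply Cauchy--Schwarz in the variable $p$, opening the square and swapping the order of summation so that $m$ becomes the outer variable. Then perform Weyl differencing in $m$ repeatedly. Each differencing step reduces the degree of the polynomial in the exponent by one, at the cost of a square-root loss. Wooley's key idea is that at each step, a small prime factor can be extracted from the surviving variable to keep it $y$-friable, so the next differencing step remains effective even over a shorter range. After $k-1$ iterations, the inner sum is linear in $m$ and can be summed geometrically; the sum over the differencing parameters $h_1, \dots, h_{k-1}$ is then controlled via Lemma~\ref{lem:adam-smooth-3}.

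\emph{Step 3: diophantine dichotomy.} Collecting the bounds, one obtains either $E_k(x,y;\vth) \ll x^{1-\sigma}$ directly, or else a positive proportion of the differencing parameters $h$ satisfy $\|h\,\vth'\| \leq \varepsilon$ for some small $\varepsilon$, where $\vth'$ is the leading coefficient of the differenced polynomial (essentially $k!\,\vth$). Invoking Lemma~\ref{lem:vinogradov-polynomial} then forces a rational approximation $a/q$ to $\vth$ with $q \ll x^{\lambda}$ and $\|q\vth\| \ll x^{-k+\lambda}$, placing $\vth \in \major(x^{\lambda}, x)$ and contradicting the hypothesis.

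The main obstacle lies in Step~2, where each differencing step combines a degree reduction with a variable-length reduction, and one must ensure that the smooth factorization can be performed throughout the iteration without exhausting the available friable structure. The precise admissible values of $\eta$ and $\sigma$ are determined by this bookkeeping, and depend on $k$ and $\lambda$ in a controlled but intricate way.
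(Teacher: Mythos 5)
The paper's proof of Proposition~\ref{prop:weyl-smooth-wooley} is a two-line citation: it follows from~\cite[Theorem~1.1]{Woo95} when~$\lambda=1$, and from~\cite[Theorem~4]{Wooley-exp-sum-smooth-general} in general. Your proposal instead attempts to reconstruct Wooley's argument from scratch, which is a different (and much more ambitious) route.

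As a reconstruction, however, the sketch has a genuine gap at its core. Step 2 describes ``Weyl differencing in $m$ repeatedly,'' with each differencing step losing a square root and reducing the polynomial degree by one, so that after $k-1$ iterations the inner sum is linear. That is precisely classical Weyl differencing, and if one iterates it $k-1$ times one recovers only the classical exponent $\sigma \asymp 2^{1-k}$, which uses nothing about friability and offers no improvement over Weyl's inequality. Wooley's actual mechanism --- repeated \emph{efficient} differencing --- is structurally different: one differences only a bounded number of times, and instead of reducing degree one reduces to mean values $\int |F_k(\vth)|^{2s}\,\dd\vth$ of smooth Weyl sums via H\"older's inequality, where the friable factorizations are used to control the new Vinogradov-type systems that arise. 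The gains come from good bounds on these mean values, not from degree reduction. Your sketch does not mention the mean value input at all, and so even granting the factorization in Step~1, the bookkeeping in Step~2 would not deliver a positive $\sigma$ beyond the classical one in a way that exploits the friability hypothesis $y \leq x^\eta$.

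Similarly, the Step~3 invocation of Lemma~\ref{lem:vinogradov-polynomial} is not how Wooley closes the argument; his minor arc bound is obtained by combining the mean value estimate with a pointwise major arc approximation and Diophantine approximation in the style of Vaughan, not a strong-recurrence lemma. The cleanest and intended proof of this proposition is simply to quote the results cited in the paper.
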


\begin{proof}
This follows from~\cite[Theorem~1.1]{Woo95} when~$\lambda = 1$. In the general case, this follows from \cite[Theorem 4]{Wooley-exp-sum-smooth-general}.
\end{proof}

The following proposition covers the range~$x^{\eta} \leq y \leq x$. In its proof we adopt the natural strategy of factoring out largest prime factors of non-$y$-friable numbers.

\begin{prop}\label{prop:weyl-large-y}
Fix a positive integer~$k$ and some~$\eta\in(0, 1]$. Let~$2 \leq y \leq x$ be large with~$y \geq x^{\eta}$, and let~$\vth \in [0,1] \setminus \major(Q, x)$ for some~$Q \geq 1$. Then~$E_k(x, y; \vth) \ll x Q^{-c}$ for some~$c=c(k, \eta)>0$.
\end{prop}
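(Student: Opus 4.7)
Following the hint given just before the statement, my plan is to iterate Buchstab's identity to factor out the largest prime factor of each non-$y$-friable integer, reducing the friable sum to complete Weyl sums plus residuals at smaller scales. The starting point is
\[ E_k(x, y; \vth) = \sum_{n \leq x} e(n^k \vth) - \sum_{y < p \leq x} F(x/p, p; p^k \vth), \]
where $F(X, W; \vth) := \sum_{n \leq X,\, P(n) \leq W} e(n^k \vth)$. For the complete Weyl sum, I would apply Dirichlet's theorem to write $\vth = a/q + \delta$ with $q \leq x$ and $|\delta| \leq 1/(qx)$; the minor-arc hypothesis $\vth \notin \major(Q, x)$ forces $q(1+|\delta|x^k) \gg Q$, so Lemma~\ref{lemme:Weyl-qpetit} delivers the savings $x Q^{-\sigma(k)}$.

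For the sum over primes, I would iterate Buchstab on each $F$-term, peeling off the largest prime factor at each level. Since every extracted prime exceeds $y \geq x^{\eta}$ and the cumulative product is at most $x$, the iteration terminates after at most $R = \lceil 1/\eta \rceil$ levels with empty residuals, yielding the identity
\[ E_k(x, y; \vth) = \sum_{n \leq x} e(n^k \vth) \;-\; \sum_{r=1}^{R} \sum_{\substack{p_1 \geq \cdots \geq p_r > y \\ p_1 \cdots p_r \leq x}} E_k\bigl(x/d_r,\, y;\, d_r^k \vth\bigr), \qquad d_r := p_1 \cdots p_r. \]
The residual $E_k$-terms are then controlled by a combination of induction and direct estimation. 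The key observation is that if $\vth \notin \major(Q, x)$, then $d^k \vth \notin \major(Q/(2d^k), x/d)$; this is verified directly by substituting any hypothetical good approximation $a'/q'$ of $d^k\vth$ and reducing $a'/(q'd^k)$ to lowest terms, which produces an approximation of $\vth$ contradicting the minor-arc assumption. Thus the proposition applies inductively to $E_k(x/d, y; d^k\vth)$ at scale $x/d$ whenever $d^k \leq Q^{1/2}$, while for the remaining ``large'' $d$ (where $d^k > Q^{1/2}$, so $x/d$ is small or even $\leq y$ making friability automatic), I would directly invoke Lemma~\ref{lemme:Weyl-qpetit} on the resulting complete Weyl sum using a fresh Dirichlet approximation to $d^k\vth$ of parameter $\asymp x/d$.

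\textbf{Main obstacle.} The hard part will be to verify that the cascade of bounds combines cleanly into $O(x Q^{-c})$ without the combinatorial sum over $(p_1, \ldots, p_r)$ or the $d^k$-loss in transferring minor-arc data eroding the exponent. A naive induction gives a bound $x Q^{-c}\sum_d d^{kc-1}$, and summing $d^{kc-1}$ over rough products $d = p_1 \cdots p_r$ up to $Q^{1/k}$ is comparable to $Q^c$, exactly cancelling the savings. Overcoming this requires a careful two-parameter split: picking $c = c(k,\eta)$ small enough, using the Mertens-type estimate $\sum_{y < p \leq x} 1/p = O(\log(1/\eta))$ to dominate the combinatorial factor, and exploiting the fact that when $d$ is large, the complete Weyl bound applies to a much shorter sum (length $x/d$), so the ``bad'' range $d^k \sim Q$ is reached only once and the worst-case contribution is absorbed rather than compounded across iteration levels.
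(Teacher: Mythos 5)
Your Buchstab decomposition and the transfer claim ($\vth\notin\major(Q,x)$ forces $d^k\vth\notin\major(Q/(2d^k),x/d)$, via clearing denominators in $a'/(q'd^k)$) are both correct, and they would indeed avoid the Vinogradov-lemma machinery the paper uses. But the ``large $d$'' branch contains a genuine gap. For $d^k>Q^{1/2}$ you propose to invoke Lemma~\ref{lemme:Weyl-qpetit} on ``the resulting complete Weyl sum,'' yet $E_k(x/d,y;d^k\vth)$ is \emph{not} a complete Weyl sum: the $y$-friability constraint is still present, and friability is automatic only when $x/d\leq y$, i.e.\ $d\geq x/y\geq x^{1-\eta}$. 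In the interesting regime $Q$ is a modest power of $x$, so $d^k>Q^{1/2}$ gives only $d>Q^{1/(2k)}$ and $x/d$ can still be far larger than $y$. Worse, since every rough $d>1$ has $d>y$, the common case $Q^{1/(2k)}\leq y$ sends \emph{every} term into the large-$d$ branch, so your small-$d$ induction never fires; and the trivial bound $\Psi(x/d,y)\ll x/d$ summed over rough $d>1$ gives $\asymp x\log x/\log y\asymp x/\eta$, no saving at all.

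Your ``main obstacle'' paragraph correctly diagnoses the problem (the $d$-dependent loss in the minor-arc parameter erodes the saving), but the proposed remedy does not resolve it. The paper avoids this precisely by \emph{not} transferring the minor-arc property with a $d$-dependent loss. Instead it fixes a uniform parameter $R$ (a small power of $Q$), defines the exceptional set $\CM$ of $m$ with $m^k\vth\in\major(R,x/m)$, and uses Lemma~\ref{lem:vinogradov-polynomial} to show $|\CM|\ll PR^{-1}$; for the non-exceptional $m$ the induction delivers the uniform saving $R^{-c}$, so the combinatorial loss never accrues. Note also that the paper keeps the friability parameter of the residual at $p$ (the extracted prime) rather than $y$, which is what makes the induction over $s=\lceil 1/\eta\rceil$ apply cleanly, and it handles the boundary case $P\geq x/Q^c$ (which occurs only at the top level $s=1$) by a Weyl sum over primes rather than a friable one. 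Your argument would need to import an exceptional-set/equidistribution step of this kind to close the large-$d$ case; as written it does not.
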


\begin{proof}
When~$\eta = 1$ the conclusion follows from Lemma~\ref{lemme:Weyl-qpetit}. Now assume that the conclusion holds when~$\eta \geq 1/s$ for some positive integer~$s$, and let~$\eta \in [1/(s+1), 1/s)$.  We may write
\[ E_k(x, y ; \vth) = E_k(x, x^{1/s} ; \vth) - \sum_{y < p \leq x^{1/s}} \sum_{n \in S(x/p, p)} e((pn)^k \vth). \]
The bound~$|E_k(x, x^{1/s}; \vth)| \ll x Q^{-c}$ follows from the induction hypothesis. To treat the double sum, split it into dyadic intervals so that we need to prove
\begin{equation}\label{eq:S(P)} 
S(P) = \sum_{P<p\leq 2P} \sum_{n \in S(x/p, p)} e( (pn)^k \vth) \ll xQ^{-c}, 
\end{equation}
for~$y \leq P \leq x^{1/s}$. We divide into two cases depending on whether~$P\leq x/Q^c$ or not (in fact,~$P \leq x/Q^c$ is the only case unless~$s = 1$).

Assume first~$P \leq x/Q^c$ so that~$x/P \geq Q^c$. We bound~$S(P)$ by
\[ S(P) \leq \sum_{P<m\leq 2P} \Big| \sum_{n \in S(x/m, m)} e( (mn)^k \vth) \Big|. \]
Here we have dropped the primality condition on~$m$.  Let~$R \geq 1$ be a parameter that will be chosen to be a small power of~$Q$, and let~$\CM$ be the set of~$m\in(P, 2P]$ with~$m^k \vth \in \major(R, x/m)$. Since~$m \geq (x/m)^{1/s}$, we may apply the induction hypothesis to the inner sum when~$m \notin \CM$ to obtain
\[ S(P) \ll \frac xP |\CM| + xR^{-c} . \]
To complete the proof of \eqref{eq:S(P)} in this case, it suffices to show that~$|\CM| \ll PR^{-1}$. Suppose, for the sake of contradiction, that~$|\CM| \geq P R^{-c}$. For each~$m \in \CM$, we may find~$q_m \leq R$ such that~$\| m^k (q_m\vth) \| \leq R (x/m)^{-k}$. By the pigeonhole principle, there exists~$q_0 \leq R$ such that~$\| m^k (q_0 \vth) \| \ll RP^k/x^k$ for at least~$|\CM|/R$ values of~$m \in \CM$.

Now apply Lemma~\ref{lem:vinogradov-polynomial} to the angle~$q_0 \vth$ with~$\varepsilon = R(x/P)^{-k} \leq RQ^{-c}$ and~$\delta = |\CM|/(R P) \geq R^{-2}$. Since~$\varepsilon < \delta/5$ if~$R$ is a sufficiently small power of~$Q$, we conclude that  there is a positive integer~$q \ll \delta^{-O(1)} \ll R^{O(1)}$ such that 
\[ \| q q_0 \vth \| \ll \frac{\delta^{-O(1)}\varepsilon}{P^k} \ll \frac{R^{O(1)}}{x^k}. \]
This contradicts the assumption that~$\vth \notin \major(Q, x)$ if~$R$ is a sufficiently small power of~$Q$.

It remains to deal with the case when~$P \geq x/Q^c$ (which only happens when~$s = 1$). From the assumption~$\vth \notin \major(Q, x)$ we may deduce that for all~$n\leq Q^c$, we have~$n^k\vth \notin \major(Q^{1/2}, 2P)$. Bounding~$S(P)$ in \eqref{eq:S(P)} by
\[ S(P) \leq \frac xP \sup_{n\leq x/P} \Big| \sum_{P < p \leq \min\{2P, x/n\}} e( (pn)^k \vth ) \Big|,  \]
the conclusion follows from estimates for Weyl sums over primes stated below.
\end{proof}

\begin{lemma}
Fix a positive integer~$k$. Let~$x$ be large, and let~$\vth \in [0,1] \setminus \major(Q, x)$ for some~$Q \geq 1$. Then
\[ \left| \sum_{p \leq x} \e(p^k \vth) \right| \ll x Q^{-c} \] 
for some~$c=c(k)>0$.
\end{lemma}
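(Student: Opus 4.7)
The plan is to use a Vaughan-type identity to decompose the prime sum into Type~I and Type~II bilinear sums, and then use the minor arc hypothesis together with Lemma~\ref{lemme:Weyl-qpetit} and Lemma~\ref{lem:vinogradov-polynomial} to bound each type.

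First I would write $\sum_{p \leq x} \e(p^k \vth) = \sum_{n \leq x} \Lambda(n)(\log n)^{-1} \e(n^k \vth) + O(x^{1/k})$ and then apply partial summation to remove the $(\log n)^{-1}$ factor. I would then apply Vaughan's identity with parameters $U = V = x^{\kappa}$ for a small $\kappa = \kappa(k) > 0$ to be chosen later. This expresses $\Lambda(n)$ as the sum of four pieces, each of which gives rise to either a Type~I sum of the form
\[ T_I = \sum_{m \leq M} a_m \sum_{n \leq x/m} \e((mn)^k \vth), \qquad M \leq x^{2\kappa}, \]
with $|a_m| \ll \tau(m) \log x$, or a Type~II sum of the form
\[ T_{II} = \sum_{M < m \leq 2M} \sum_{x/(2m) < n \leq x/m} a_m b_n \e((mn)^k \vth), \qquad x^{\kappa} \leq M \leq x^{1-\kappa}, \]
with $|a_m|, |b_n| \ll \tau(m)\tau(n) \log x$ (after dyadic decomposition in $m$).

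For the Type~I sums, I would apply Lemma~\ref{lemme:Weyl-qpetit} to the inner sum over $n$ with the angle $m^k\vth$ in place of $\vth$. If $T_I$ is large, then by pigeonholing in $m$ there are $\gg M/Q^{c'}$ values of $m \leq M$ for which $m^k \vth$ is close to a rational with small denominator; by pigeonholing on the denominator, we find a fixed $q_0 \ll Q^{c'}$ such that $\|m^k(q_0 \vth)\|$ is small for many $m$. An application of Lemma~\ref{lem:vinogradov-polynomial} then upgrades this to a rational approximation $\|q q_0 \vth\| \ll Q^{O(c')}/x^k$ with $q \ll Q^{O(c')}$, contradicting $\vth \notin \major(Q,x)$ provided $c'$ is chosen small in terms of $c$ and $k$.

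The Type~II sums are the main obstacle. Here I would apply Cauchy--Schwarz in $n$ (after swapping sums) to obtain
\[ |T_{II}|^2 \ll (x \log x)^{O(1)} \sum_{x/(2M) < n \leq x/M} \Big| \sum_{M < m \leq 2M} a_m \e((mn)^k\vth) \Big|^2, \]
then expand the square and exchange the order of summation to reduce to bounding $\sum_{m_1, m_2} a_{m_1}\overline{a_{m_2}} \sum_n \e(((m_1 n)^k - (m_2 n)^k)\vth)$. Writing $(m_1 n)^k - (m_2 n)^k = n^k(m_1^k - m_2^k) + \dots$ and performing $k-1$ further Weyl differencing steps in $n$ reduces the inner sum to a linear exponential sum whose phase is a polynomial in the differencing parameters and $\vth$. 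A geometric sum bound then shows that any large contribution forces a strong rational approximation to $k! m_1 m_2 (m_1^k - m_2^k)\vth$, and by a further pigeonhole and another application of Lemma~\ref{lem:vinogradov-polynomial} (to remove the outer variables $m_1, m_2$), this contradicts $\vth \notin \major(Q,x)$. Optimizing $\kappa$ produces a power saving $xQ^{-c}$ with $c = c(k) > 0$.
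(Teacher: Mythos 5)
Your proposal takes a genuinely different route from the paper. The paper's proof is a brief reduction to two published results: it first dispenses with small $Q$ by the trivial bound $\pi(x) \ll x/\log x$; then, if $\vth$ lies in the wide major arcs $\major(x^{0.1},x)$ it invokes Kumchev's major-arc estimate for prime Weyl sums, and otherwise Dirichlet approximation places $\vth$ in a classical minor arc to which Harman's bound $\ll x^{1+\ee}(q^{-1}+x^{-1/2}+qx^{-k})^{4^{1-k}}$ applies. Your approach is a self-contained Vaughan-identity argument, which is the classical template underlying the cited results; what it buys is independence from external references, at the cost of redoing the Type~I/II analysis.

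That said, the sketch has gaps that would need repair. (1) After Cauchy--Schwarz in $n$, the phase is exactly $n^k(m_1^k-m_2^k)\vth$ (there is no ``$+\dots$''), and this is already a degree-$k$ Weyl sum in $n$, so Lemma~\ref{lemme:Weyl-qpetit} applies to it directly; the ``$k-1$ further differencing steps'' are redundant (they are already inside the proof of that lemma), and the quantity you extract should be $k!(m_1^k-m_2^k)\vth$ — the factor $m_1m_2$ in ``$k!m_1m_2(m_1^k-m_2^k)\vth$'' does not appear. (2) Lemma~\ref{lem:vinogradov-polynomial} as stated in the paper concerns the monomial $m^k\vth$; to ``remove the outer variables'' after pigeonholing in $m_2$ you need it for the shifted polynomial $m_1\mapsto m_1^k-m_2^k$ with $m_2$ fixed, which holds by the same differencing argument but is not literally the cited statement. (3) More seriously, the Cauchy--Schwarz diagonal contributes $\gg x$ to the expanded square, so $|T_{II}|\ll x/\sqrt{M}\leq x^{1-\kappa/2+o(1)}$ is the best this step can give; this beats $xQ^{-c}$ only when $Q\leq x^{\kappa/(2c)}$. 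For larger $Q$ you must switch to the classical minor-arc bound (as the paper does via Harman), and the proposal does not address this dichotomy. None of these are fatal, but as written the argument does not cover all $Q\geq 1$.
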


\begin{proof}
We may assume that~$Q \geq (\log x)^A$ for some large constant~$A$, since otherwise the statement is trivial. If~$\vth \in \major(x^{0.1}, x)$, then the conclusion follows from~\cite[Theorem~2]{Kum}. Now assume that~$\vth \notin \major(x^{0.1}, x)$. By Diophantine approximation, we may find~$0 \leq a \leq q \leq x^{k-0.1}$ with~$(a,q) = 1$ such that~$|q\vth - a|\leq x^{-k+0.1}$. Since~$\vth \notin \major(x^{0.1},x)$, we have~$q \geq x^{0.1}$. The  conclusion then follows from a standard minor arc bound such as
\[ \left| \sum_{p \leq x} \e(p^k \vth) \right| \ll x^{1+\ee}(q^{-1} + x^{-1/2} + qx^{-k})^{4^{1-k}} \]
from \cite{Harman}.
\end{proof}

\subsection{Friable Weyl sums, for small values of~$y$}\label{sec:minor-small-y}

Note that Proposition~\ref{prop:weyl-smooth-wooley} does not apply to ~$\vth$ in minor arcs when~$q$ and~$|\delta|x^k$ grow slower than any positive power of~$x$. In this section, we take care of this situation  by a variant of Vinogradov's method, roughly following the argument of Harper~\cite{Harper}.

\begin{prop}\label{estim-verysmooth-minor}
Fix a positive integer~$k \geq 2$. Let~$2 \leq y \leq x$ be large and let~$\alpha = \alpha(x,y)$. Let~$\vth = a/q+\delta$ for some~$0 \leq a \leq q$ and~$(a, q)=1$. Write~$\CQ = q(1 + |\delta|x^k)$, and assume that~$4y^2 \CQ^3 \leq x$. Then for some~$\sigma=\sigma(k)>0$, we have
\[
E_k(x,y; \vth) \ll \Psi(x, y) \CQ^{-\sigma+2(1-\alpha)} (\log x)^5.
\]
\end{prop}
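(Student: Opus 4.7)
The strategy will be to adapt Harper's argument \cite[Theorem~1]{Harper} for $k=1$ to arbitrary $k$-th powers. The starting point is to leverage the multiplicative structure of friable numbers: every $n \in S(x,y)$ admits roughly $\log y$ factorizations $n = m d$ with $d$ in a given dyadic range $(D, 2D]$, for any $D \leq x/y$. Choosing $D$ as a small power of $\CQ$ (the hypothesis $4 y^2 \CQ^3 \leq x$ guarantees that $D$ can be placed comfortably in the bilinear regime), and using Lemma~\ref{lem:adam-smooth-3} to control the weight correction, we obtain a bilinear representation
\[ E_k(x, y; \vth) \;\approx\; \frac{1}{\log y} \sum_{d \in S(2D, y)\setminus S(D, y)} \sum_{m \in S(x/d, y)} \e\bigl( m^k d^k \vth \bigr) \]
up to a controlled error term.

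Next I would apply Cauchy-Schwarz in the outer variable and, after completing the inner friable sum to a complete Weyl sum at the cost of a factor $O(\log x)$, expand the square to obtain an expression of the shape
\[ |E_k(x,y;\vth)|^2 \;\ll\; \frac{\Psi(x/D, y) (\log x)^{O(1)}}{(\log y)^2} \sum_{m_1, m_2 \in S(x/D, y)} \Bigl| \sum_{d \in (D, 2D]} \e\bigl( d^k (m_1^k - m_2^k) \vth \bigr) \Bigr|. \]
The diagonal contribution $m_1 = m_2$ is at most $\Psi(x/D, y) \cdot D$, which is acceptable after undoing the Cauchy-Schwarz loss. For off-diagonal pairs, the inner sum is a complete Weyl sum with angle $\beta = (m_1^k - m_2^k)\vth$, and Lemma~\ref{lemme:Weyl-qpetit} bounds it by $D \cdot \CQ_\beta^{-\sigma(k)}$, where $\CQ_\beta$ denotes the denominator associated to $\beta$.

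The crux is to show that $\CQ_\beta$ is large for most pairs $(m_1, m_2)$. I would argue by contradiction: suppose a proportion $\eta$ of pairs satisfy $\CQ_\beta \leq R$. Pigeonholing over denominators yields some $q_0 \leq R$ for which $\|m^k (q_0 \vth)\| \ll R / x^k$ holds at $\gg \eta^{1/2} \Psi(x/D, y)$ values of $m \in S(x/D, y)$. Now I invoke Lemma~\ref{lem:vinogradov} with $A = S(x/D, y)$; its distribution hypothesis in short arithmetic progressions is furnished by Lemma~\ref{lem:adam-smooth-3}, which gives $\Delta \ll \log x$ and $L \ll \log x$. The conclusion then forces $\|Q \vth\| \ll R^{O(1)} / x^k$ for some $Q \ll R^{O(1)}$, contradicting the size of $\CQ$ provided $R$ is a sufficiently small power of $\CQ$. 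Reassembling all bounds and converting $(x/D)^\alpha D^{1-\alpha}$ back to $\Psi(x, y)$ via Lemma~\ref{lem:adam-smooth-2} (which produces the factor $\CQ^{O(1-\alpha)}$) yields the claimed bound.

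I expect the main obstacle to be the Diophantine bootstrapping step, that is, making Lemma~\ref{lem:vinogradov} sharp enough in the polynomial setting $k \geq 2$. Unlike the linear case, the linearization from $m^k \vth$ back to $\vth$ loses a factor of roughly $M^{k-1}$, so one must carefully choose $D$ so that the range $S(x/D, y)$ of $m$ is long relative to the obstructing scale $L$ in Lemma~\ref{lem:vinogradov}; this is precisely where the hypothesis $4 y^2 \CQ^3 \leq x$ enters. The $(\log x)^5$ in the final bound absorbs the weight $1/\log y$ above, the loss in Lemma~\ref{lem:adam-smooth-3}, and the completion of the inner friable sum.
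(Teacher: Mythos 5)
Your proposal shares the paper's high-level architecture — factor friable $n$ into a short part times a long part, Cauchy--Schwarz to strip the friability off the short variable, then invoke Lemma~\ref{lemme:Weyl-qpetit} on the resulting complete Weyl sum — but it diverges on the crucial off-diagonal step, and that divergence hides a genuine gap. The paper does \emph{not} use Lemma~\ref{lem:vinogradov} in this proof. After applying Lemma~\ref{lemme:Weyl-qpetit}, the paper splits the bilinear sum according to $r = (q, d^k(n_2^k - n_1^k))$, reduces to counting friable $n_2$ in the congruence class $n_2^k \equiv n_1^k \pmod{r'}$ inside a short interval, and applies Lemma~\ref{lem:adam-smooth-3} directly — a clean forward-counting argument. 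Your contradiction argument via Lemma~\ref{lem:vinogradov} is the mechanism the authors use in the proof of the \emph{large-value estimate} (Proposition~\ref{prop:mv}, Section~\ref{sec:proof-large-1}, Case~2), not here; it is plausibly adaptable to this setting, but you have effectively conflated two different proofs in the paper.

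Two concrete errors would sink your argument as written. First, the prefactor after Cauchy--Schwarz is wrong: if you apply Cauchy--Schwarz in the short variable $d$ and extend by positivity, the prefactor is $\Psi(2D,y) \ll D$, not $\Psi(x/D,y)$. With your prefactor the off-diagonal contribution (after applying Lemma~\ref{lemme:Weyl-qpetit}) is of size $\Psi(x/D,y)^3\, D\, R^{-\sigma_k} \asymp D^{1-3\alpha}\Psi(x,y)^3 R^{-\sigma_k}$, which exceeds the target $\Psi(x,y)^2 \CQ^{-c}$ by a factor roughly $\Psi(x,y) \asymp x^{\alpha}$; no admissible choice of $R$ (a small power of $\CQ \le x^{1/3}$) can absorb that. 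Second, the claim that Lemma~\ref{lem:adam-smooth-3} gives ``$\Delta \ll \log x$ and $L \ll \log x$'' in Lemma~\ref{lem:vinogradov} is too strong: Lemma~\ref{lem:adam-smooth-3} only yields $\Delta \ll (M/|P|)^{1-\alpha}\log x$, so $\Delta$ depends on the progression length and is controlled only because $1-\alpha$ is small (the paper bounds it by $N^{k(1-\alpha)}\log x$ in its own use of Lemma~\ref{lem:vinogradov}). Finally, the framing ``every $n\in S(x,y)$ admits roughly $\log y$ factorizations $n=md$ with $d \in (D,2D]$'' is not a valid bilinear decomposition; the paper instead uses the canonical one where $m$ is the product of the smallest prime factors of $n$ (so $d$ ranges over $[L, P^+(m)L]$, hence the extra factor of $y$ in the hypothesis $4y^2\CQ^3 \le x$), followed by a Fourier argument to decouple the range of the long variable from $m$, and it must also extract $d = (n,q^{\infty})$ at the outset — none of which appears in your sketch.
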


\begin{proof}
We may assume that~$y \geq (\log x)^6$, since otherwise the claim is trivial by taking~$\sigma < 1/6$. Extracting the gcd~$d=(n, q^\infty)$, we may write
\[ E_k(x,y; \vth) = \sum_{\substack{d \mid q^{\infty} \\ P^+(d) \leq y}} \sum_{\substack{n \leq x/d \\ P^+(n) \leq y \\ (n,q)=1}} \e( (nd)^k \vth). \]
The contribution from those terms with~$d \geq \CQ$ is bounded by
\[ \ssum{d \mid q^\infty \\ d \geq \CQ} \Psi(x/d, y) \ll \Psi(x,y) \ssum{d \mid q^\infty \\ d \geq \CQ} d^{-\alpha} \ll_{\varepsilon} \CQ^{-\alpha + \varepsilon} \Psi(x,y),  \]
where the first inequality follows from Lemma~\ref{lem:adam-smooth-2} and the second inequality follows by Rankin's trick. Hence
\[ E_k(x, y ; \vth) =\ \ssum{d|q^\infty\\d\leq \CQ\\P^+(d)\leq y}\ssum{n\leq x/d\\P^+(n)\leq y\\(n, q)=1} \e((nd)^k\vth) + O\Big(\frac{\Psi(x, y)}{\CQ^{\alpha/2}}\Big). \]
We may also discard the terms with~$n \leq x/\CQ$ from the above, since their contribution is bounded by
\[ \ssum{d \mid q^\infty \\ d \leq \CQ} \Psi(x/\CQ, y) \ll \CQ^{-\alpha} \Psi(x,y) \ssum{d \mid q^\infty \\ d \leq \CQ}  \ll_{\varepsilon} \CQ^{-\alpha + \varepsilon} \Psi(x,y), \]
where, again, the first inequality follows from Lemma~\ref{lem:adam-smooth-2} and the second from Rankin's trick. It follows that
\[ E_k(x, y ; \vth) =\ \ssum{d|q^\infty\\d\leq \CQ\\P^+(d)\leq y}\ssum{x/\CQ<n\leq x/d\\P^+(n)\leq y\\(n, q)=1} \e((nd)^k\vth) + O\Big(\frac{\Psi(x, y)}{\CQ^{\alpha/2}}\Big). \]
Let~$L = 4y \CQ$ be a parameter. For the inner sum over~$n$, factoring out a divisor~$m$ of size about~$L$ by taking the product of the smallest prime factors of~$n$, we may write
\[
E_k(x, y ; \vth) 
=\ \ssum{d|q^\infty\\d\leq \CQ\\P^+(d)\leq y}\summ_{\substack{L < m \leq P^+(m)L\\x/(m\CQ)<n\leq x/(md)\\P^+(m)\leq P^-(n)\\P^+(n)\leq y \\ (mn,q) = 1}}\e((mnd)^k\vth) + O\Big(\frac{\Psi(x, y)}{\CQ^{\alpha/2}}\Big),
\]
which is allowed by our hypothesis~$yL \leq x/\CQ$. For~$M\in[L, yL]$, define
\[ \CE(M) := \ssum{d|q^\infty\\d\leq \CQ\\P^+(d)\leq y}\summ_{\substack{M < m \leq\min\{2M, P^+(m)L\}\\x/(m\CQ)<n\leq x/(md)\\P^+(m)\leq P^-(n)\\P^+(n)\leq y \\ (mn,q) = 1}}\e((mnd)^k\vth) .\]
Now mover the sum over~$n$ inside, and bound this inner sum by its absolute value. It is also convenient to remove the dependence on~$m$ in the condition~$x/(m\CQ) < n \leq x/(md)$, which can be achieved by a standard Fourier analytic argument. We obtain
\[ \CE(M) \ll (\log x)\sup_{\beta\in[0, 1)} \ssum{d|q^\infty\\d\leq \CQ}\ssum{M<m\leq 2M\\P^+(m)\leq y} \Big|\ssum{x/(2M\CQ)<n\leq x/(Md)\\P^+(n)\leq y\\P^-(n)\geq P^+(m) \\ (n,q) = 1} \e((mnd)^k\vth + \beta n)\Big|. \]
By the Cauchy--Schwarz inequality and factoring out the largest prime factor~$p = P^+(m)$ of~$m$, we deduce that for some~$\beta\in[0, 1)$,
\begin{equation}
\CE(M) \ll_{\ee} (\log x) \CQ^\ee M^{1/2}\CS_1(M)^{1/2}\label{eq:etape-CS}
\end{equation}
for any~$\ee > 0$, where
\[ \CS_1(M) := \ssum{d|q^\infty\\d\leq \CQ} \sum_{p\leq y} \ssum{M/p<m\leq 2M/p}\Bigg|\ssum{x/(2M\CQ)< n \leq x/(Md)\\P^-(n)\geq p, \ P^+(n)\leq y\\ (n, q)=1}\e((pmnd)^k\vth + \beta n)\Bigg|^2. \]
After expanding the squares and switching the order of summation, we obtain
\[ \CS_1(M) \ll \ssum{d|q^\infty\\d\leq \CQ} \sum_{p\leq y}\ssum{x/(2M\CQ)< n_1 \leq n_2 \leq x/(Md)\\(n_i, q)=1, \ P^+(n_i)\leq y} \Big| \ssum{M/p < m \leq 2M/p} \e((pmd)^k\vth(n_1^k-n_2^k))\Big|. \]
By the hypotheses and the choice of~$L$, it is straightforward to verify that
\[ \left| (pd)^k \delta (n_1^k - n_2^k) \right|  \leq \frac{p}{2qM} \]
for~$1 \leq n_1, n_2 \leq x/(Md)$. Thus we may apply Lemma~\ref{lemme:Weyl-qpetit} and obtain
\[ \ssum{M/p < m \leq 2M/p} \e((mdp)^k\vth(n_1^k-n_2^k)) \ll \frac{M}{p} \cdot \frac{(q, (pd)^k(n_2^k-n_1^k))^\sigma}{(q(1+|\delta| (Md)^k(n_2^k-n_1^k)))^\sigma}, \]
for some small~$\sigma = \sigma(k)>0$. It follows that
\[
\CS_1(M) \ll q^{-\sigma}M \big(\sum_{p\leq y} \frac{(q, p^k)^\sigma}{p}\Big) \ \CS_2(M),
\]
where
\[ \CS_2(M) := \ssum{d|q^\infty\\d\leq \CQ}\ssum{x/(2M\CQ)<n_1\leq n_2\leq x/(Md)\\(n_i, q)=1, \ P^+(n_i)\leq y} \frac{(q, d^k(n_2^k-n_1^k))^\sigma}{(1+|\delta|(Md)^k(n_2^k-n_1^k))^\sigma}. \]
Since
\[ \sum_{p\leq y} \frac{(q, p^k)^\sigma}{p} \ll \log\log y + \omega(q) \ll_{\ee} M^\ee, \]
we have
\begin{equation}\label{eq:rel-S1-S2}
\CS_1(M) \ll q^{-\sigma} M^{1+\ee} \CS_2(M). 
\end{equation}
To bound~$\CS_2(M)$, splitting according to the value of~$r = (q, d^k(n_2^k-n_1^k))$, we obtain
\begin{equation}
\CS_2(M) \leq \sum_{r|q}r^\sigma \ssum{d|q^\infty\\d\leq \CQ}\CS_3(M; r, d),\label{eq:rel-S2-S3}
\end{equation}
where
\[ \CS_3(M; r,d) := \ssum{x/(2M\CQ)<n_1\leq n_2\leq x/(Md)\\(n_i, q)=1, \ P^+(n_i)\leq y\\r| d^k(n_2^k-n_1^k)} \frac{1}{(1+|\delta|(Md)^k(n_2^k-n_1^k))^\sigma}. \]
Note that~$r|d^k(n_2^k-n_1^k)$ is equivalent to~$n_1^k \equiv n_2^k\mod{r'}$, where~$r' := r/(r, d^k)$. Since~$(n_i, q)=1$, and since there are~$O((r')^\ee)$ residue classes~$b\mod{r'}$ such that~$(b, r')=1$ and~$b^k\equiv 1 \mod{r'}$, we deduce
\begin{equation}
\CS_3(M; r,d) \ll_{\ee} r^\ee \ssum{x/(2M\CQ)< n_1\leq x/(Md)\\P^+(n_1)\leq y} \sup_{\substack{b\mod{r'}\\(b, r')=1}} \CS_4(M; r',d; n_1,b) \label{eq:rel-S3-S4}
\end{equation}
for any~$\ee > 0$, where
\[ \CS_4(M; r',d; n_1, b) := \ssum{n_1\leq n_2\leq x/(Md)\\P^+(n_2)\leq y\\n_2\equiv b\mod{r'}} \frac{1}{(1+|\delta| (Md)^k (n_2^k-n_1^k))^\sigma}. \]
We dyadically decompose this sum with respect to the size of~$n_2-n_1 \in [0, x/(Md)]$, noting that if~$ T/2 \leq n_2-n_1 \leq T$, then~$n_2^k-n_1^k \geq (n_2-n_1)n_1^{k-1} \gg Tn_1^{k-1}$. Therefore,
\begin{equation}
\CS_4(M; r',d; n_1, b) \ll (\log x)\sup_{1\leq T\leq x/(Md)} \frac{\CS_5(M; r',d'; n_1,b'; T)}{(1+|\delta|(Md)^k Tn_1^{k-1})^\sigma}, \label{eq:majo-cS4}
\end{equation}
where
\[ \CS_5(M; r',d; n_1,b; T) := |\{n_2\in \Psi(x/(Md), y) :\ |n_2-n_1|\leq T,\ n_2\equiv b\mod{r'}\}|. \]
An application of Lemma~\ref{lem:adam-smooth-3} yields
\[
\CS_5(M; r',d; n_1,b; T) \ll \big(\frac{T/r'}{x/Md}\big)^{\alpha}\Psi(x/(Md), y) \log x + 1,
\]
where we have used~$\alpha(x/Md, y)\geq \alpha(x, y)$. Combining this with \eqref{eq:majo-cS4} and noting that the bound is an increasing function of~$T$ assuming~$\sigma < \alpha$ (which we may), we obtain
\[ \CS_4(M; r',d; n_1,b) \ll (\log x)^2\Big\{\frac{\Psi(x/(Md), y)}{(r')^\alpha (1+|\delta|x(Mdn_1)^{k-1})^\sigma} + 1\Big\}. \]
Inserting this into~\eqref{eq:rel-S3-S4} and recalling~$r' = r/(r,d^k)$, we obtain
\[ \CS_3(M; r,d) \ll \frac{(\log x)^2(r, d^k)^{\alpha}}{r^{\alpha-\ee}}\Big\{ \CS_3'(M; d) + \Psi(x/(Md), y) \Big \}, \]
where
\[ \CS_3'(M; d) := \Psi(x/(Md), y) \ssum{x/(2M\CQ)< n_1 \leq x/(Md)\\P^+(n_1) \leq y} \frac{1}{(1+|\delta|x(Mdn_1)^{k-1})^{\sigma}} \ll \frac{\Psi(x/(Md), y)^2}{(1+|\delta|x^k)^\sigma} \]
by partial summation assuming~$\sigma<\alpha/k$ (which we may). Since~$Md \leq yL\CQ \leq x/\CQ$ by our hypothesis, we have by Lemma~\ref{lem:adam-smooth-2},
$$ \Psi(x/(Md), y) \gg \big(\frac{x}{Md}\big)^\alpha \gg \CQ^{\alpha} \gg (1+|\delta|x^k)^\sigma, $$
and thus
\[ \CS_3(M; r,d) \ll \frac{(\log x)^2(r, d^k)^{\alpha}}{r^{\alpha-\ee}} \cdot \frac{\Psi(x/(Md), y)^2}{(1+|\delta|x^k)^{\sigma}} \ll \frac{(\log x)^2(r, d^k)^{\alpha}}{r^{\alpha-\ee}d^{2\alpha}}\frac{\Psi(x/M, y)^2}{(1+|\delta|x^k)^{\sigma}} \]
again by Lemma~\ref{lem:adam-smooth-2}. Inserting this bound into~\eqref{eq:rel-S2-S3}, we obtain
\[ \CS_2(M) \ll (\log x)^2 \frac{\Psi(x/M, y)^2}{(1+|\delta|x^k)^{\sigma}} \sum_{r \mid q} \ssum{d \mid q^{\infty} \\ d \leq \CQ}  \frac{(r, d^k)^\alpha}{r^{\alpha-\sigma-\ee}d^{2\alpha}}. \]
Writing~$r' = (r,d^k)$, the double sum over~$r$ and~$d$ above can be bounded by
\[  q^\ee \sum_{r|q}r^{\sigma-\alpha}\sum_{r'|r}(r')^\alpha \ssum{d|q^\infty\\r'|d^k}d^{-2\alpha}. \]
The inner sum over~$d$ is less than
\[ \big(\min\{r'' : r'|(r'')^k\}\big)^{-2\alpha} \sum_{d|q^\infty}d^{-2\alpha} \ll (r')^{-2\alpha/k}, \]
so that
\[ \sum_{r|q}\ssum{d|q^\infty\\d\leq D} \frac{(r, d^k)^\alpha}{r^{\alpha-\sigma-\ee}d^{2\alpha}} \ll
q^\ee \sum_{r|q}r^{\sigma-\alpha}\sum_{r'|r}(r')^{\alpha(1-2/k)} \ll q^\ee \sum_{r|q}r^{\sigma-2\alpha/k} \ll q^{2\ee}. \]
It follows that
\[ \CS_2(M) \ll_{\ee} q^\ee (\log x)^2 \frac{\Psi(x/M, y)^2}{(1+|\delta|x^k)^{\sigma}} \ll \frac{q^\ee(\log x)^2 M^{-2\alpha}\Psi(x, y)^2}{(1+|\delta|x^k)^{\sigma}} \]
for any~$\ee > 0$.  Finally, inserting this into~\eqref{eq:rel-S1-S2} we obtain 
\[ \CS_1(M) \ll_{\ee} \frac{(\log x)^2 M^{1-2\alpha+\ee}\Psi(x, y)^2}{(q(1+|\delta|x^k))^{\sigma - \ee}}, \]
and thus by~\eqref{eq:etape-CS} we have
\[ \CE(M) \ll_{\ee} (\log x)^2 M^{1-\alpha+\ee}\Psi(x, y)\CQ^{-\sigma/2+\ee} \]
for any~$\ee > 0$. The desired bound follows from a dyadic summation over~$M$, since~$M^{1-\alpha} \leq (yL)^{1-\alpha} \ll (y^2\CQ^2)^{1-\alpha} \ll (\log x)^2 \CQ^{2(1-\alpha)}$.
\end{proof}

\subsection{Deduction of Theorem~\ref{thm:estim-minor}}

We now have all the ingredients to deduce Proposition~\ref{prop:estim-minor} (and thus Theorem~\ref{thm:estim-minor}). Let the situation be as in the statement of Proposition~\ref{prop:estim-minor}. Let~$\eta > 0$ be a sufficiently small constant. If~$y \geq x^{\eta}$, then the conclusion follows from Proposition~\ref{prop:weyl-large-y}. Now assume that~$y \leq x^{\eta}$. If~$\vth \notin \major(x^{0.1}, x)$, then Proposition~\ref{prop:weyl-smooth-wooley} applies with~$\lambda = 0.1$ to give the desired conclusion. Finally, assume that~$y \leq x^{\eta}$ and~$\vth \in \major(x^{0.1}, x)$. Then~$\vth = a/q + \delta$ for some~$0 \leq a \leq q \leq x^{0.1}$ with~$(a,q) = 1$ and~$|\delta| \leq q^{-1}x^{-k+0.1}$. Thus~$\CQ := q(1 + |\delta|x^k) \leq 2x^{0.1}$, and the hypothesis of Proposition~\ref{estim-verysmooth-minor} is satisfied.  Moreover, the assumption~$\vth \notin \major(Q, x)$ implies that~$\CQ \geq Q$, and thus the conclusion of Proposition~\ref{estim-verysmooth-minor} implies that
\[ E_k(x,y; \vth) \ll \Psi(x,y) Q^{-c} (\log x)^5 \]
for some constant~$c > 0$, when~$1-\alpha$ is sufficiently small. This gives the desired bound when~$Q$ is at least a large power of~$\log x$. If~$Q \leq (\log x)^A$ for some constant~$A$, then Theorem~\ref{thm:estim-majo} applies and the conclusion follows from \eqref{eq:arcmaj-majoration}.

\section{Mean value estimates: statements of results}\label{sec:mean}

The goal of this section and the next is to prove Theorem~\ref{cor:mv-k}. In this section, we reduce the task of proving Theorem~\ref{cor:mv-k} to proving Proposition~\ref{prop:mv} below that controls large values of friable exponential sums. We start with the following mean value estimate, which holds with the optimal exponent when restricted to (relatively wide) major arcs.

\begin{prop}\label{thm:restriction}
Fix a positive integer $k$. The following statement holds for some sufficiently small~$c = c(k) > 0$. Let~$2 \leq y \leq x$ be large. Let~$(a_n)_{1 \leq n \leq x}$ be an arbitrary sequence of complex numbers, and write~$f(\vth)$ for the normalized exponential sum
\[ f(\vth) = \bigg( \sum_{n\in S(x,y)} |a_n|^2 \bigg)^{-1/2} \sum_{n\in S(x,y)} a_n \e(n^k\vth). \]
Then for any~$s>k$ we have
\[ \int_{\major} | f(\vth) |^{2s} \dd\vth \ll_s \Psi(x,y)^{s} x^{-k}, \]
where
\[ \major = \bigg\{ \vth \in [0,1]: |f(\vth)|^2 \geq x^{-c} \Psi(x,y) \bigg\}, \]
provided that~$1 - \alpha(x,y) \leq c \min(1, s-k)$.
\end{prop}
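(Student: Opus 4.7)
The plan is to adapt Bourgain's restriction method, as used by Harper~\cite{Harper} in the $k=1$ case, to the $k$-th power setting. First, dyadically decompose $\major$ into level sets $S_V := \{\vth : V \leq |f(\vth)|^2 < 2V\}$ for dyadic $V \in [x^{-c}\Psi(x,y), \Psi(x,y)]$. Since $f$ is a trigonometric polynomial with Fourier support contained in $\{n^k : n \leq x\} \subset [0, x^k]$, the function $|f|^2$ has Fourier support in $[-x^k, x^k]$, so by Bernstein's inequality it is essentially constant on intervals of length $1/x^k$. Consequently $\measure(S_V) \asymp R_V/x^k$, where $R_V$ denotes the maximum size of a $1/x^k$-separated subset of $S_V$. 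Since summing dyadically costs only a $\log x$ factor (absorbed by shrinking $c$), the task reduces to showing $R_V \ll_s \Psi(x,y)^s/V^s$ for each such $V$.

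To bound $R_V$, I would apply the $TT^*$ duality not to $f$ itself --- which would only yield the weaker threshold $s > 2k$ --- but to its $s$-th power
\[
f^s(\vth) = \sum_{\vec n \in S(x,y)^s} b_{\vec n}\, e\bigl((n_1^k + \cdots + n_s^k)\vth\bigr), \qquad b_{\vec n} := \frac{a_{n_1}\cdots a_{n_s}}{\|a\|^s},
\]
for which $\sum_{\vec n} |b_{\vec n}|^2 = 1$. Choosing unimodular weights $\eps_r = \overline{\operatorname{sgn}(f^s(\vth_r))}$ and applying Cauchy--Schwarz over $\vec n \in S(x,y)^s$ gives
\[
  \Bigl(\sum_r |f(\vth_r)|^s\Bigr)^2 \leq \sum_{r,t} |E_k(x,y;\vth_r - \vth_t)|^s.
\]
Since $|f(\vth_r)|^s \geq V^{s/2}$ for the chosen points, this yields $R_V^2 V^s \leq \sum_{r,t} |E_k(x,y;\vth_r - \vth_t)|^s$. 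The diagonal contributes $R_V\,\Psi(x,y)^s$; for the off-diagonal, Theorem~\ref{thm:estim-majo} combined with Theorem~\ref{thm:estim-minor} gives
\[
  |E_k(x,y;\vth)| \ll \Psi(x,y)\,\CQ(\vth)^{-c_0}, \quad c_0 \geq \tfrac{1}{k} - O(1-\alpha) - \ee,
\]
with $\CQ(\vth) = q(1+|\delta|x^k)$ coming from the Dirichlet approximation $\vth = a/q + \delta$.

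Finally, apply Lemma~\ref{lem:restriction} (with the parameter $x$ there replaced by $x^k$, and $\Delta = Q/x^k$) and decompose $\CQ$ dyadically: the number of pairs $(r,t)$ with $\CQ(\vth_r - \vth_t) \leq Q$ is $\ll (R_V Q^{1+\ee} + R_V^2 Q^2/x^k)\log(1+Q)$, whence
\[
  \sum_{r\neq t} |E_k|^s \ll \Psi(x,y)^s\Bigl(R_V \sum_{Q} Q^{1-sc_0+\ee} + \frac{R_V^2}{x^k} \sum_{Q} Q^{2-sc_0}\Bigr),
\]
where $Q$ ranges over dyadic values up to $x^k$. The main obstacle is that for $k \geq 2$ the decay $c_0 \leq 1/k$ is strictly less than 1, so the analogous $L^2$-type sum would diverge; the $s$-fold iteration replaces $c_0$ by $sc_0$, and the hypotheses $s > k$ and $1-\alpha \leq c(k)\min(1, s-k)$ with $c(k) > 0$ sufficiently small ensure $sc_0 > 1 + \ee$. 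Under this condition the first dyadic sum is $O(1)$ and the second is controlled by $x^{k(1-sc_0)} \leq 1$, giving $\sum_{r\neq t}|E_k|^s \ll \Psi(x,y)^s(R_V + R_V^2 x^{k(1-sc_0)})$. Combining with the diagonal and performing a case analysis between the two terms on the right --- using $V \geq x^{-c}\Psi(x,y)$ to exclude the regime where the second dominates --- yields $R_V \ll_s \Psi(x,y)^s/V^s$, as required. The delicate point is calibrating $c(k)$ and the $\ee$-losses so that the barely-convergent dyadic sums are absorbed into the diagonal contribution.
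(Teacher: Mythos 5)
Your proposal takes a genuinely different route from the paper, but it contains a gap that the paper's actual argument is specifically engineered to avoid. The paper deduces this proposition as a quick layer-cake consequence of the large value estimate (Proposition~\ref{prop:mv}): one picks a maximal $x^{-k}$-separated subset of the level set $S(\gamma)=\{\vth : |f(\vth)|^2\geq\gamma^2\Psi(x,y)\}$, applies Proposition~\ref{prop:mv} to bound its cardinality by $\gamma^{-2k-O(1-\alpha)-\ee}$, and integrates. The heavy lifting is all inside Proposition~\ref{prop:mv}. Your proposal instead tries to directly run the Bourgain duality on $f^s$ and then sum dyadically over $\CQ$.

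The gap is the assertion that ``Theorem~\ref{thm:estim-majo} combined with Theorem~\ref{thm:estim-minor} gives $|E_k(x,y;\vth)| \ll \Psi(x,y)\,\CQ(\vth)^{-c_0}$ with $c_0 \geq 1/k - O(1-\alpha)-\ee$.'' That exponent is only available on the major arcs, i.e.\ when $\CQ \leq (\log x)^A$ for some fixed $A$ (this is a hypothesis of Theorem~\ref{thm:estim-majo}). For $\CQ$ larger than a fixed power of $\log x$, the only bound is the minor-arc estimate (Proposition~\ref{prop:estim-minor}), whose exponent $c(k)$ is much smaller than $1/k$ — the paper notes it is of size $\asymp (k\log k)^{-1}$ for mildly friable variables, and in general only polynomially small in $1/k$. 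So in your dyadic sum $\sum_Q Q^{1-sc_0+\ee}$, once $Q$ leaves the major-arc range you only have $c_0 = c(k) \ll 1/(k\log k)$, and the hypothesis $s>k$ alone does \emph{not} give $sc_0 > 1$. The sum then contributes a positive power of $x$, which destroys the final bound.

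The paper gets around this in Proposition~\ref{prop:mv} by two devices you omit. First, for $\gamma^{-1}\ll(\log x)^{B}$, it uses the \emph{lower} bound $\sum_{r,s}|E_k(\vth_r-\vth_s)|\geq \gamma^2 R^2\Psi(x,y)$ to discard all pairs with $|E_k|<\tfrac12\gamma^2\Psi(x,y)$; by Proposition~\ref{prop:estim-minor} the remaining pairs automatically have $\CQ\ll\gamma^{-3k}\ll(\log x)^{3kB}$, i.e.\ they lie in the major-arc range where the $\CQ^{-1/k}$ decay is genuinely available (Lemma~\ref{lem:large-gamma}). Second, and crucially, for smaller $\gamma$ — which corresponds to level sets $V\ll(\log x)^{-2B}\Psi(x,y)$ in your parametrization, and this regime cannot be avoided — the paper factors each $n\in S(x,y)$ as $n=mn'$ with $m\in[x/(yK),x/K]$ friable and $n'$ \emph{unrestricted}, so that the inner sum becomes a \emph{complete} Weyl sum $\sum_{n'\leq x/m}\e((mn')^k\vth)$, for which the $\CQ^{-1/k}$ decay (Lemma~\ref{lemme:Weyl-qpetit} together with the classical major-arc expansion) holds uniformly up to $\CQ\asymp x^k$. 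This factoring step is the essential new idea from Harper's $k=1$ argument and is the reason the threshold $s>k$ can be reached; without it your proof only covers $s\gtrsim 1/c(k)$, which is far larger than $k$.

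Your Cauchy--Schwarz step itself (applying duality to $f^s$ rather than $f$ and getting $R_V^2V^s\leq\sum_{r,t}|E_k(\vth_r-\vth_t)|^s$) is correct for integer $s$ and is a reasonable alternative to the paper's first-moment Cauchy--Schwarz followed by H\"older. The case analysis between the diagonal and off-diagonal terms, using $V\geq x^{-c}\Psi(x,y)$ to kill the $R_V^2x^{-\sigma k}$ branch, is also sound in outline. What is missing is the mechanism that keeps all relevant $\CQ$ inside the range where $\CQ^{-1/k}$ decay is actually proved.
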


Proposition~\ref{thm:restriction} is a straightforward consequence of the following result, controlling the number of (well spaced) phases with large values of exponential sums.

\begin{prop}\label{prop:mv}
Fix a positive integer~$k$.
Let~$2 \leq y \leq x$ be large and let~$\alpha = \alpha(x,y)$. Let~$\{a_n\}_{1\leq n\leq x}$ be an arbitrary sequence of complex numbers, and write~$f(\vth)$ for the normalized exponential sum
\[ f(\vth) = \bigg( \sum_{n\in S(x,y)} |a_n|^2 \bigg)^{-1/2} \sum_{n\in S(x,y)} a_n \e(n^k\vth). \]
Let~$\vth_1,\cdots,\vth_R\in [0,1]$ be reals satisfying~$\|\vth_r-\vth_s\|\geq x^{-k}$ for any~$r\neq s$. Suppose that 
\[ |f(\vth)|^2 \geq \gamma^2 \Psi(x,y)  \]
for each~$1\leq r\leq R$ and some~$\gamma\in (0,1]$. If~$\gamma \geq x^{-c}$ and~$1 - \alpha \leq c$ for some sufficiently small~$c = c(k) > 0$, then~$R\ll_{\epsilon}\gamma^{-2k-O(1-\alpha)-\ee}$ for any~$\varepsilon > 0$.
\end{prop}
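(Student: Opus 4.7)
The plan is to adapt Harper's restriction-theoretic argument (which handles $k=1$ in~\cite{Harper}) to higher $k$, combining Bourgain-style duality with the pointwise Weyl estimates of Theorems~\ref{thm:estim-majo}--\ref{thm:estim-minor} and the counting Lemma~\ref{lem:restriction}. First I would normalize so that $\sum_n|a_n|^2 = 1$, set $\lambda_r = |f(\vth_r)|$ and $T = \sum_r\lambda_r^2 \geq R\gamma^2\Psi(x,y)$, and choose unimodular $\beta_r$ with $\beta_r f(\vth_r) = \lambda_r$. Expanding $T = \sum_n\bar a_n\sum_r\beta_r\lambda_r\,\e(-n^k\vth_r)$ and applying Cauchy--Schwarz in $n$ (using $\sum|a_n|^2=1$ and that $a_n$ is supported on $S(x,y)$) yields the bilinear inequality
\[
T^2 \leq \sum_{r,s}\lambda_r\lambda_s\,|E_k(x,y;\vth_r - \vth_s)|,
\]
reducing the statement to a sharp upper bound on the right-hand side.

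To control the right-hand side I would record a uniform pointwise Weyl bound of the shape
\[
|E_k(x,y;\vth)| \ll \Psi(x,y)\,\CQ_\vth^{-\sigma+\ee}, \qquad \sigma = 1/k - 2(1-\alpha),
\]
where $\CQ_\vth = q(1+|\delta|x^k)$ is the height of the best Dirichlet approximation $\vth = a/q + \delta$ (the major-arc regime is handled by~\eqref{eq:arcmaj-majoration} and the minor-arc regime by Theorem~\ref{thm:estim-minor}). I would then dyadically partition the pairs $(r,s)$ by the size of $\CQ_{rs} := \CQ_{\vth_r - \vth_s}$; Lemma~\ref{lem:restriction}, applied with $x$ replaced by $x^k$ to match the $1/x^k$-spacing of the phases and with $\Delta\asymp Q/x^k$, gives for each dyadic level
\[
\#\{(r,s)\colon\CQ_{rs}\leq Q\} \ll_\ee \bigl(RQ^{1+\ee} + R^2 Q^2/x^k\bigr)\log x.
\]
Combining the pointwise decay and this count, and optimizing a truncation level $Q_0 = \gamma^{-O(1)}$, produces an inequality from which $R$ can be isolated.

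A plain $L^2$-implementation of this scheme is not sharp enough to produce the exponent $2k$; it would lose a factor of $k$ and yield only something like $R \ll \gamma^{-O(1/\sigma)}$. To obtain the correct $R\ll\gamma^{-2k-O(1-\alpha)-\ee}$, I would run the duality step using the $k$-fold convolution of $f$: viewing $f(\vth)^k = \sum_N r_k(N)\,\e(N\vth)$ as an exponential sum supported on $N\in[0,kx^k]$, the hypothesis yields $|f(\vth_r)^k|^2 \geq \gamma^{2k}\Psi(x,y)^k$, and the analogous dual inequality combined with the dyadic counting above lets one close with the correct exponent. The hypothesis $\gamma \geq x^{-c}$ is precisely what keeps the secondary term $R^2Q_0^2/x^k$ in Lemma~\ref{lem:restriction} dominated by the main term $RQ_0^{1+\ee}$ at the optimum, while the loss $O(1-\alpha)$ tracks that in Theorem~\ref{thm:estim-majo} and is harmless for $y \geq (\log x)^{C(k)}$.

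The main obstacle, and the step requiring the most care, is this last upgrade: balancing the slow pointwise decay $\CQ^{-1/k}$ (which degrades as $k$ grows) against the density bound of Lemma~\ref{lem:restriction}, and carefully accounting for how the $k$-fold convolution $f^k$ interacts with the counting of well-spaced points, is where the delicate bookkeeping lies. For $k=1$, Harper's argument produces the sharp $R\ll\gamma^{-2-\ee}$ without this extra layer; for $k\geq 2$, passing through $f^k$ is essential.
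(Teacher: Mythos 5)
Your opening Cauchy--Schwarz step and the plan to combine pointwise Weyl bounds with Lemma~\ref{lem:restriction} match the paper's strategy, and you correctly diagnose that a naive $L^2$ pass loses a factor of $k$. But your proposed repair — passing to the $k$-fold convolution $g=f^k$ — does not close, and it is not what the paper does. If one dualizes through $g$, the Cauchy--Schwarz step produces the quantity $\sum_N|b_N|^2 = \int_0^1 |f(\vth)|^{2k}\,\dd\vth$ on the right, and the resulting inequality is $R\ll\gamma^{-2k}\Psi(x,y)^{-k}x^k\int|f|^{2k}$. To conclude $R\ll\gamma^{-2k-\ee}$ you would need $\int|f|^{2k}\ll\Psi(x,y)^k x^{-k}$, which is exactly the endpoint of the mean-value statement this proposition is being used to establish (and the trivial bound $\int|f|^{2k}\leq\Psi(x,y)^{k-1}$ is off by a polynomial factor $x^{k-\alpha}$). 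The argument is therefore circular as written. The paper's actual fix is much lighter: it keeps the bound $|E_k|\ll\Psi G^{1/k}$ with $G=G_{x^k,Q,\Delta}$, applies H\"older in the form $\sum_{r,s}G(\vth_r-\vth_s)^{1/k}\leq R^{2(k-1)/k}\big(\sum_{r,s}G(\vth_r-\vth_s)\big)^{1/k}$, and then invokes Lemma~\ref{lem:restriction} on $\sum G$. This yields $\gamma^2R^2\ll R^{2-1/k}\gamma^{-O(1-\alpha)-\ee}$, giving exactly $R\ll\gamma^{-2k-O(1-\alpha)-\ee}$. Your claim that ``passing through $f^k$ is essential'' for $k\geq 2$ is false; the H\"older step on $G^{1/k}$ is what is essential.

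A second, more structural gap: your proposal treats all $\gamma\in(x^{-c},1]$ by the single scheme of pointwise major/minor arc bounds for $E_k(x,y;\cdot)$, but Theorem~\ref{thm:estim-majo} only applies when $\CQ\leq(\log x)^A$, i.e.\ when $\gamma^{-1}$ is at most a fixed power of $\log x$. When $\gamma$ is smaller than this (down to $x^{-c}$) the rationals in play have denominators far beyond the reach of any friable major-arc estimate. The paper handles this with a genuinely different idea that your proposal omits: factor each $n\in S(x,y)$ as $n=mn'$ with $m$ friable in a controlled dyadic window and $n'\leq x/m$ \emph{unrestricted}, so that after a Cauchy--Schwarz the inner sum over $n'$ becomes a \emph{complete} Weyl sum, for which classical major-arc asymptotics and Lemma~\ref{lemme:Weyl-qpetit} hold uniformly in $q$. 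The contribution from $m$ is then controlled via the Erd\H{o}s--Tur\'an inequality, the Vinogradov-type Lemma~\ref{lem:vinogradov}, and Proposition~\ref{prop:estim-minor}. Without this factorization, the small-$\gamma$ regime — which is where the bulk of the technical work lives — is not reached by your outline.
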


Large value estimates for complete Weyl sums of this type first appeared in \cite{Bou89}. For friable exponential sums with~$k=1$, this is proved by Harper \cite{Harper}.

In the remainder of this section, we give the standard deduction of Proposition~\ref{thm:restriction} from Proposition~\ref{prop:mv}, and also deduce Theorem~\ref{cor:mv-k} from Proposition~\ref{thm:restriction}. The proof of Proposition~\ref{prop:mv} is the topic of Section~\ref{sec:proof-large}.

\subsection{Proof of Proposition~\ref{thm:restriction} assuming Proposition~\ref{prop:mv}}

Note the trivial bound~$|f(\vth)|^2 \leq \Psi(x,y)$ which follows from the Cauchy-Schwarz inequality. For any~$\gamma \in (0,1]$, define
\[ S(\gamma) = \{ \vth \in [0,1]: |f(\vth)|^2 \geq \gamma^2 \Psi(x,y) \}. \]
Let~$c > 0$ be sufficiently small. We claim that if~$\gamma \in (x^{-c},1]$, then
\[
\measure(S(\gamma)) \ll_{\varepsilon} \gamma^{-2k- O(1-\alpha) - \varepsilon}x^{-k}, 
\]
for any~$\ee > 0$. To prove this claim, pick a maximal~$x^{-k}$ separated set of points~$\{\vth_1,\cdots,\vth_R\} \subset S(\gamma)$. In other words, the set~$\{\vth_1,\cdots,\vth_R\}$ satisfies~$\|\vth_r-\vth_s\| \geq x^{-k}$ for any~$r\neq s$, and moreover for any~$\vth \in S(\gamma)$ we have~$\|\vth - \vth_r\| \leq x^{-k}$ for some~$r$. Hence~$S(\gamma)$ is contained in the union of arcs centered around~$\vth_r$ ($1 \leq r\leq R$) with length~$2x^{-k}$, and the claim follows from Proposition~\ref{prop:mv}. By the assumption on~$1- \alpha$, we may ensure that
\[ \measure(S(\gamma)) \ll \gamma^{-s-k} x^{-k}. \]
Now write
\begin{align*} 
\int_{S(x^{-c})} |f(\vth)|^{2s} \dd\vth &=  2s \Psi(x,y)^s \int_0^1 \int_0^1 \gamma^{2s-1} \1_{\vth \in S(\gamma) \cap S(x^{-c})} \dd\gamma \dd\vth \\
&= \Psi(x,y)^s \left( 2s \int_{x^{-c}}^1 \gamma^{2s-1} \measure(S(\gamma)) \dd\gamma + O\left( x^{-2cs} \measure(S(x^{-c})) \right) \right). 
\end{align*}
The conclusion follows since
\[ \int_{x^{-c}}^1 \gamma^{2s-1} \measure(S(\gamma)) \dd\gamma \ll_s x^{-k} \int_{x^{-c}}^1 \gamma^{s-k-1} \dd\gamma \ll_s x^{-k} \]
and
\[  x^{-2cs} \measure(S(x^{-c})) \ll x^{-c(s-k)} x^{-k} \ll x^{-k}. \]

%
%

\subsection{Proof of Theorem~\ref{cor:mv-k} assuming Proposition~\ref{thm:restriction}}


In view of Proposition~\ref{thm:restriction}, Theorem~\ref{cor:mv-k} follows from Lemma~\ref{lem:mv-k} below.

\begin{lemma}\label{lem:mv-k}
Fix a positive integer~$k$. There exists~$p=p(k) \geq 2k$ such that
\[
\int_0^1 | E_k(x,y;\vth) |^{p} \dd\vth \ll_{p,\ee} x^{p-k+\ee} 
\]
for any~$\ee>0$. Moreover, we may take~$p(1)=2$,~$p(2)=4$, and~$p(3)=8$. If~$y \leq x^c$ for some sufficiently small~$c = c(k) > 0$, then we may take~$p(3)=7.5907$ and~$p(k)=k(\log k + \log\log k + 2 + O(\log\log k/\log k))$ for large~$k$.
\end{lemma}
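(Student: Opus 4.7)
The plan is to reduce the friable mean value to known mean value estimates for Weyl sums. For an even integer exponent $p=2s$ the starting point is the standard identity
\[
\int_0^1 |E_k(x,y;\vth)|^{2s}\,\dd\vth = \#\Big\{(\ve n,\ve m)\in S(x,y)^{2s}:\sum_{i=1}^s n_i^k=\sum_{j=1}^s m_j^k\Big\},
\]
which, upon relaxing $n_i,m_j\in S(x,y)$ to $n_i,m_j\leq x$ via the trivial inclusion $S(x,y)\subset[1,x]$, is bounded above by
\[
\int_0^1 \Big|\sum_{n\leq x}\e(n^k\vth)\Big|^{2s}\,\dd\vth.
\]

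For the unconditional choices $p(1)=2$, $p(2)=4$, $p(3)=8$ (valid for any $y$), the above reduction combined with Parseval's identity (for $k=1$) or Hua's inequality (for $k=2,3$) delivers the desired bound. For general $k$, the existence of some finite $p(k)$ is furnished either by Hua's inequality (at $p=2^k$) or, more efficiently for large $k$, by the recent resolution of the Vinogradov main conjecture of Bourgain--Demeter--Guth \cite{BDG-Decoupling}, which gives a bound of the required shape with $p(k)$ of size $O(k^2)$.

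For the refined values $p(3)=7.5907$ and $p(k)=k(\log k+\log\log k+2+O(\log\log k/\log k))$ under the hypothesis $y\leq x^c$, relaxing to the complete Weyl sum is too wasteful, and I would instead appeal directly to the smooth Weyl sum mean value estimates of Wooley \cite{Woo95b}. These estimates are proved by the iterative Vaughan--Wooley method, which exploits the multiplicative structure of the set of smooth numbers to obtain sharp integer-exponent mean values well below the classical Vinogradov/Hua thresholds; after H\"older interpolation between such integer-exponent bounds, they deliver the non-integer exponent $p(3)=7.5907$ for cubes and the large-$k$ asymptotic matching the best known upper bound on $G(k)$.

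The only substantive issue I anticipate is a compatibility check: Wooley's estimates in \cite{Woo95b} are phrased in terms of the set $\mathcal{A}(P,R)$ of $R$-smooth integers in $[1,P]$ (with possibly auxiliary conditions tailored to the differencing argument), while $E_k(x,y;\vth)$ sums over all of $S(x,y)$. Since these sets coincide up to inessential variants and any small discrepancy is absorbed by the $\ee$ loss in the exponent, no analytic content beyond \cite{Woo95b} is required, and the lemma follows from combining the two regimes above.
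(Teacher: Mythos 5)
Your handling of the unconditional cases ($p(1)=2$ via Parseval, $p(2)=4$ and $p(3)=8$ via Hua after relaxing $S(x,y)\subset[1,x]$, and existence of some finite $p(k)$) matches the paper's approach. However, for the refined exponents under $y\leq x^c$, and especially for large $k$, your proposal has a genuine gap. Wooley's mean value estimate for smooth Weyl sums (\cite[Lemma~2.1]{Woo95}) gives
\[
\int_0^1 |E_k(x,x^c;\vth)|^{2s}\,\dd\vth \ll_\ee x^{2s-k+\Delta_{s,k}+\ee}, \qquad \Delta_{s,k}=ke^{1-2s/k},
\]
and this extra term $\Delta_{s,k}$ does \emph{not} vanish: even at $2s = k(\log k + \log\log k + 2 + \cdots)$ one has $\Delta_{s,k} \asymp 1/\log k$, a positive constant depending only on $k$. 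So ``H\"older interpolation between such integer-exponent bounds'' cannot by itself deliver the sharp exponent $x^{p-k+\ee}$ — you must actually remove the excess $\Delta_{s,k}$. This is the heart of the paper's proof: following Vaughan \cite[Section~5]{Vau89}, one bounds the target integral $T$ by $\int_0^1 |E_k(x,x^c;\vth)|^{p-2}|E_k(x,x;\vth)|^2\,\dd\vth$ and splits into major and minor arcs. On the major arcs, H\"older together with the classical major-arc bound $\int_{\major}|E_k(x,x;\vth)|^p\,\dd\vth \ll x^{p-k}$ gives $T_1 \ll T^{(p-2)/p} x^{2(p-k)/p}$, which can be rearranged to the sharp bound; on the minor arcs, one extracts $t\in\{k,k+1\}$ factors of the minor-arc smooth Weyl sum bound from \cite[Theorem~1.1]{Woo95}, whose saving $\rho(k)t$ (with $\rho(k)^{-1}\sim k\log k$) is only just large enough to beat $\Delta_{s,k}$, as the paper's estimate $\rho(k)t - \Delta_{s,k} \geq (C/4)\log\log k/(\log k)^2$ makes quantitative. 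Dismissing this step as a ``compatibility check'' or citation-chasing understates what is actually a delicate pruning argument occupying most of the proof.
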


Indeed, to deduce Theorem~\ref{cor:mv-k} from this lemma, let~$c > 0$ be sufficiently small and denote by~$\Fm$ the set of~$\vth \in [0,1]$ with
\[ | E_k(x,y;\vth) | \leq x^{-c} \Psi(x,y). \]
The contribution to the mean value integral from those~$\vth \notin \Fm$ is dealt with by Proposition~\ref{thm:restriction}. Thus it suffices to show that
\[ \int_{\Fm} | E_k(x,y;\vth) |^{2s} \dd\vth \ll \Psi(x,y)^{2s} x^{-k} \]
whenever~$2s > p$, where~$p = p(k)$ is the exponent in Lemma~\ref{lem:mv-k}. To prove this, bound the left hand side by
\[ (x^{-c} \Psi(x,y))^{2s-p} \int_{\Fm} |E_k(x,y;\vth)|^{p} \dd\vth \ll_{p,\varepsilon} x^{-c(2s-p) + p-k + \varepsilon} \Psi(x,y)^{2s-p} \]
using Lemma~\ref{lem:mv-k}. This bound is~$O( \Psi(x,y)^{2s} x^{-k})$ if~$1-\alpha \leq [c(2s-p) - \varepsilon]/p$. The conclusion follows if we choose~$\varepsilon = c(2s-p)/2$.

\begin{proof}[Proof of Lemma~\ref{lem:mv-k}]
First note that for~$p(k)=2^k$ we have
\[ \int_0^1 \left| E_k(x,y; \vth) \right|^{2^k} \dd\vth  \leq \int_0^1 |E_k(x,x;\vth)|^{2^k} \dd\vth \]
by considering the underlying diophantine equation. The right side above is bounded by~$x^{2^k-k+\ee}$ for any~$\ee>0$ by Hua's lemma (see~\cite[Lemma 2.5]{Vaughan97}). This shows the existence of~$p(k)$ as well as the choice of~$p(k)$ for~$k\in\{1,2,3\}$.

Now assume that~$y \leq x^c$ for some sufficiently small~$c = c(k) > 0$.  The fact that we may take~$p(3) = 7.5907$ follows from~\cite[Theorem 1.4]{WooleyThreeCubesII} or~\cite[formula~(6.3)]{WooleyThreeCubesII}. For large~$k$, the claimed choice for~$p(k)$ follows from Wooley's work on Waring's problem and friable Weyl sums \cite{Woo92,Woo95}, together with arguments very close to those in~\cite[Section 5]{Vau89} that deal with major arcs. For completeness, we include the details here.


Let~$k$ be large and let~$p = k(\log k + \log\log k + 2 + C\log\log k/\log k)$ be an even integer for some large constant~$C > 0$. 
By considering the underlying diophantine equation, we obtain
\[ \int_0^1 \left| E_k(x, y; \vth) \right|^{p} \dd\vth   \leq \int_0^1 |E_k(x,x^c;\vth)|^{p-2} |E_k(x,x;\vth)|^2 \dd\vth. \]
The two copies of the complete exponential sum are required in the major arc analysis. Call the right hand side above~$T$, and our goal is to show that~$T\ll x^{p-k}$.
For~$0 \leq a \leq q\leq x$ and~$(a,q)=1$, define 
\[ \major(q,a) = \left\{ \vth\in [0,1]: | q\vth-a| \leq 1/(2kx^{k-1}) \right\}, \]
and let~$\major$ be the union of all these.  
Split~$T$ into two integrals
\[ T_1 = \int_{\major} |E_k(x,x^c;\vth)|^{p-2} |E_k(x,x;\vth)|^2 \dd\vth\]
and
\[ T_2 = \int_{[0,1]\setminus\major} |E_k(x,x^c;\vth)|^{p-2} |E_k(x,x;\vth)|^2 \dd\vth. \]
To bound~$T_1$, by H\"{o}lder's inequality we have
\[ T_1 \leq  \left( \int_0^1 |E_k(x,x^c;\vth)|^p \dd\vth \right)^{(p-2)/p} \left( \int_{\major} |E_k(x,x;\vth)|^p \dd\vth \right)   ^{2/p}. \]
The first integral above is at most~$T$ by considering the underlying diophantine equation and the second integral over~$\major$ can be bounded by~$x^{p-k}$ (see~\cite[Lemma 5.1]{Vau89}). Hence
\[ T_1\ll  T^{(p-2)/p} x^{2(p-k)/p}. \]
To bound~$T_2$, we use the trivial bound~$|E_k(x,x;\vth)|\leq x$ and take out~$t$ copies of the minor arc exponential sum, where~$t\in\{k,k+1\}$ is even:
\[ T_2 \leq x^2 \big( \sup_{\vth\notin\major} |E_k(x,x^c;\vth)| \big)^t \int_0^1 |E_k(x,x^c;\vth)|^{p-2-t} \dd\vth. \]
From~\cite[Theorem 1.1]{Woo95} we have
\[ \sup_{\vth\notin\major} |E_k(x,x^c;\vth)| \ll_{\ee} x^{1-\rho(k)+\ee} \]
for any~$\ee > 0$, provided that~$c$ is sufficiently small depending on~$\ee$. Here~$\rho(k)>0$ satisfied~$\rho(k)^{-1} = k (\log k + O(\log\log k))$. From~\cite[Lemma 2.1]{Woo95}, for any positive integer~$s$ we have
\[  \int_0^1 |E_k(x,x^c;\vth)|^{2s} \dd\vth \ll_{\ee} x^{2s-k+\Delta_{s,k}+\ee} \]
for any~$\ee > 0$, where~$\Delta_{s,k}=ke^{1-2s/k}$. Apply this with~$2s=p-2-t$ to get
\[ T_2 \ll x^{p-k+\ee} x^{\Delta_{s,k}-\rho(k)t}   \]
for any~$\ee > 0$. Since~$2s=p-2-t \geq k(\log k + \log\log k + 1 + (C-1)\log\log k/\log k)$ for large~$k$, we have
\[ \Delta_{s,k}\leq \frac{1}{\log k} \exp\left(-(C-1)\frac{\log\log k}{\log k}\right) \leq \frac{1}{\log k}\left(1 - \frac{C}{2} \cdot \frac{\log\log k}{\log k} \right) . \]
This implies that 
\[ \rho(k)t - \Delta_{s,k} \geq \rho(k)k - \Delta_{s,k} \geq \frac{C}{4} \cdot \frac{\log\log k}{(\log k)^2}, \]
and thus~$T_2\ll x^{p-k}$. Combining the bounds for~$T_1$ and~$T_2$ we obtain
\[ T \ll T^{(p-2)/p} x^{2(p-k)/p} + x^{p-k}. \]
This implies the desired bound~$T\ll x^{p-k}$.
\end{proof}

%

\section{Proof of the large value estimates}\label{sec:proof-large}

The goal of this section is to prove Proposition~\ref{prop:mv}. Let~$c > 0$ be a sufficiently small constant. We may clearly assume that~$\varepsilon \leq c$. We may also assume that~$y \leq x^c$, since otherwise~$\Psi(x,y) \gg x$ and the conclusion follows from Bourgain's work~\cite[Section 4]{Bou89}. Recall also that we are able to assume~$1-\alpha \leq c$ and~$\gamma \geq x^{-c}$.

Using the major arc estimates in Theorem~\ref{thm:estim-majo}, Bourgain's argument~\cite{Bou89} can be followed to treat the case when~$\gamma^{-1}$ is smaller than a fixed power of~$\log x$. When~$\gamma^{-1}$ is larger, we will use well factorability of friable numbers to arrive at a double sum, and after applying the Cauchy-Schwarz inequality we will be able to drop the friability restriction on one of the sums, in order to take advantage of good major arc estimates for complete exponential sums. 

We now turn to the details. For each~$1\leq r\leq R$, let~$\eta_r$ be a complex number with~$|\eta_r|=1$ such that~$|f(\vth_r)| = \eta_r f(\vth_r)$. From the assumption that
\[ |f(\vth_r)|^2 \geq \gamma^2 \Psi(x,y) \]
for each~$1 \leq r \leq R$, we obtain
\[ \sum_{1\leq r\leq R}\eta_r\sum_{n \in S(x,y)} a_n \e(n^k\vth_r)\geq \gamma R\Psi(x,y)^{1/2}  \bigg( \sum_{n \in S(x,y)} |a_n|^2 \bigg)^{1/2}. \]
An application of the Cauchy-Schwarz inequality after changing the order of summation in~$r$ and~$n$ leads to
\begin{equation}\label{eq:mean0} 
\sum_{n \in S(x,y)} \left|\sum_{1\leq r\leq R} \eta_r \e(n^k\vth_r)\right|^2\geq \gamma^2R^2\Psi(x,y). 
\end{equation}

\subsection{The case of large~$\gamma$.}\label{sec:proof-large-3}

Let us first assume that~$\gamma^{-1}\leq \min((\log x)^B, y^c)$ for some large constant~$B = B(k,\ee)$. In this subsection, we allow all implied constants to depend on~$B$. Expand the square in~\eqref{eq:mean0} to find
\begin{equation}\label{eq:large-gamma-rs}
 \sum_{1\leq r,s\leq R}\bigg|\sum_{n \in S(x,y)} \e(n^k(\vth_r-\vth_s)) \bigg|\geq \gamma^2R^2\Psi(x,y). 
\end{equation}
Let~$\FQ$ be the set of~$\vth\in [0,1]$ with~$|E_k(x,y; \vth)| \geq \gamma^2\Psi(x,y)/2$.
Then
\begin{equation}\label{eq:Ethetars} 
\ssum{1\leq r,s\leq R\\ \vth_r-\vth_s\in\FQ} |E_k(x,y; \vth_r-\vth_s)| \geq \frac{1}{2}\gamma^2 R^2\Psi(x,y). 
\end{equation}

\begin{lemma}\label{lem:large-gamma}
Let the notations and assumptions be as above (in particular, assume~$\gamma^{-1}\leq (\log x)^B$). If~$\vth\in\FQ$, then~$\vth=a/q+\delta$ for some~$(a,q)=1$ with~$\CQ = q(1+|\delta| x^k) \ll \gamma^{-3k}$. Moreover, we have
\[ |E_k(x,y; \vth)| \ll_{\ee, B} \Psi(x,y) \CQ^{-1/k+2(1-\alpha)+\ee}, \]
for any~$\varepsilon>0$.
\end{lemma}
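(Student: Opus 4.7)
The plan is a standard ``minor arcs forces $\CQ$ small, then major arcs gives the asymptotic'' bootstrap, using Theorems~\ref{thm:estim-minor} and~\ref{thm:estim-majo} as black boxes. Given $\vth \in \FQ$, I would first apply Dirichlet's approximation theorem to write $\vth = a/q + \delta$ with $(a,q) = 1$ and $q$ bounded polynomially in $x^k$, placing $\vth$ in a form suitable for either theorem depending on the size of $\CQ = q(1+|\delta|x^k)$.

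To force $\CQ$ to be small, I would invoke the minor arc bound in the form of Proposition~\ref{prop:estim-minor}: if $\vth \notin \major(Q,x)$, then $|E_k(x,y;\vth)| \ll \Psi(x,y) Q^{-c_0}$ for some $c_0 = c_0(k) > 0$. Choosing $Q$ to be a suitably large fixed power of $\gamma^{-1}$, the lower bound $|E_k(x,y;\vth)| \geq \gamma^2 \Psi(x,y)/2$ forces $\vth \in \major(Q,x)$, which unravels to an approximation with $\CQ \ll Q^2$. The hypothesis $\gamma^{-1} \leq (\log x)^B$ then gives $\CQ \leq (\log x)^{A}$ for some $A = A(B,k)$. (Note that Proposition~\ref{prop:estim-minor} requires $y \geq (\log x)^K$, which is guaranteed by $1-\alpha \leq c$ combined with the asymptotic~\eqref{eq:alpha-1}.)

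With $\CQ \leq (\log x)^A$ in hand, I am in position to apply Theorem~\ref{thm:estim-majo}. Its hypothesis $y \geq (\log x)^{CA}$ is again ensured by taking $c$ sufficiently small in $1-\alpha \leq c$, so that $y$ exceeds any fixed power of $\log x$. The conclusion is precisely the asymptotic bound claimed in the lemma:
\[ |E_k(x,y;\vth)| \ll_{\ee,B} \Psi(x,y) \CQ^{-1/k + 2(1-\alpha)+\ee}. \]
Combining this upper bound with the assumed lower bound $\gamma^2 \Psi(x,y)/2 \leq |E_k(x,y;\vth)|$ yields $\gamma^2 \ll \CQ^{-1/k + 2(1-\alpha) + \ee}$. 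Choosing $c$ and $\ee$ small enough that $1/k - 2(1-\alpha) - \ee \geq 2/(3k)$, this rearranges to $\CQ \ll \gamma^{-3k}$, proving the first assertion of the lemma.

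I expect no substantive obstacle; the whole argument is constant-chasing, the delicate point being to verify that the parameter $A$ produced at the minor-arc step can be absorbed into the hypothesis $y \geq (\log x)^{CA}$ of the major-arc theorem, which is precisely what the standing assumptions $\gamma^{-1} \leq (\log x)^B$ and $1-\alpha \leq c$ are designed to guarantee.
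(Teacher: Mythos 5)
Your overall strategy matches the paper's: use the lower bound $|E_k(x,y;\vth)|\geq\gamma^2\Psi(x,y)/2$ together with Proposition~\ref{prop:estim-minor} to force $\vth$ into $\major(\gamma^{-C},x)$, then invoke Theorem~\ref{thm:estim-majo} (via~\eqref{eq:arcmaj-majoration}) and combine with the lower bound to obtain $\CQ\ll\gamma^{-3k}$. However, there is a genuine gap at the step where you verify the hypothesis $y\geq(\log x)^{CA}$ of Theorem~\ref{thm:estim-majo}.

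You claim this hypothesis is ``ensured by taking $c$ sufficiently small in $1-\alpha\leq c$, so that $y$ exceeds any fixed power of $\log x$.'' This is not right: by~\eqref{eq:alpha-1}, $1-\alpha\leq c$ only guarantees $y\gg(\log x)^{1/c-O(1)}$, a \emph{fixed} power of $\log x$, and $c=c(k)$ is a constant depending on $k$ only (coming from Proposition~\ref{prop:mv}, and fixed at the outset of Section~\ref{sec:proof-large}). Meanwhile the exponent $A$ you produced at the minor-arc step depends on $B$, and $B=B(k,\ee)$ is allowed to be as large as one pleases in $\ee$ (it must be, in order for the small-$\gamma$ case in Section~\ref{sec:proof-small-gamma} to absorb a factor of $(\log x)^{O_k(1)}$). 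So you cannot close the loop with $1-\alpha\leq c$ alone.

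The missing ingredient is the other half of the standing assumption in this subsection, $\gamma^{-1}\leq y^c$, which you do not use. Together with $\CQ\ll\gamma^{-C_1}$ (with $C_1=C_1(k)$ from the minor-arc step), this gives $\CQ\ll y^{cC_1}$, i.e.~$\CQ$ is bounded by a \emph{fixed small power of $y$}, where the exponent depends only on $k$ after shrinking $c=c(k)$. This is exactly what makes the pair of hypotheses ``$\CQ\leq(\log x)^A$ and $y\geq(\log x)^{CA}$'' of Theorem~\ref{thm:estim-majo} simultaneously satisfiable for a suitable $A$: one may take $A=\log\CQ/\log\log x$, and then $y\geq(\log x)^{CA}$ is equivalent to $\CQ\leq y^{1/C}$, which holds once $cC_1\leq 1/C$. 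Replacing your appeal to $1-\alpha\leq c$ by an appeal to $\gamma^{-1}\leq y^c$ repairs the proof and recovers the paper's argument.
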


\begin{proof}
Since~$E_k(x,y; \vth)\geq \gamma^2 \Psi(x, y)/2$, Proposition~\ref{prop:estim-minor} implies that~$\vth\in\major(\gamma^{-C}, x)$ for some~$C=C(k)>0$. Since~$\gamma^{-1} \leq \min((\log x)^B, y^c)$, we may apply Theorem~\ref{thm:estim-majo} (in particular the estimate~\eqref{eq:arcmaj-majoration}) to obtain the desired upper bound for~$E_k(x,y; \vth)$. Combining this upper bound with the lower bound~$E_k(x,y; \vth)\geq \gamma^2 \Psi(x, y)/2$, we get~$\CQ \ll \gamma^{-3k}$ as desired.

\end{proof}

We are now in a position to apply Lemma~\ref{lem:restriction}. Let~$Q = C\gamma^{-3k}$ for some large constant~$C > 0$, and let~$\Delta = Qx^{-k}$. Consider the function~$G = G_{x^k, Q, \Delta}$ defined by
\[ G(\vth) = \sum_{q \leq Q} \frac{1}{q} \sum_{a=0}^{q-1} \frac{ \1_{\|\vth - a/q\| \leq \Delta} }{1 + x^k\|\vth - a/q\|}. \]
Lemma~\ref{lem:large-gamma} implies that
\[ E_k(x,y; \vth) \ll \Psi(x,y) G(\vth)^{1/k} \gamma^{-6k(1-\alpha+\varepsilon)} \]
whenever~$\vth \in \FQ$. Comparing this with~\eqref{eq:Ethetars} we obtain
\[ \gamma^2 R^2 \Psi(x,y) \ll \Psi(x,y) \gamma^{-6k(1-\alpha+\varepsilon)} \sum_{1 \leq r,s\leq R} G(\vth_r - \vth_s)^{1/k}, \]
which simplifies to
\[ \sum_{1 \leq r,s \leq R} G(\vth_r - \vth_s)^{1/k} \gg R^2 \gamma^{2 + 6k(1-\alpha+\varepsilon)}. \]
On the other hand, by H\"{o}lder's inequality and Lemma~\ref{lem:restriction} we have
\begin{align*}
\sum_{1\leq r,s\leq R} G(\vth_r-\vth_s)^{1/k} &\leq R^{2(k-1)/k} \bigg(\sum_{1\leq r,s\leq R} G(\vth_r-\vth_s) \bigg)^{1/k} \\
& \ll R^{2(k-1)/k} \left[ (R\gamma^{-\epsilon} + x^{-k}R^2\gamma^{-3k} + \gamma^{A} R^2) \log (1+\gamma^{-3k}) \right]^{1/k},
\end{align*}
for any~$A > 0$. Combining this with the lower bound we arrive at
\[ R^2 \gamma^{2 + 6k(1-\alpha+2\varepsilon)} \ll R^{2-1/k} + R^2 x^{-1}\gamma^{-3} + R^2 \gamma^A \]
for any~$A > 0$. The second and the third terms on the right above are clearly smaller than the left hand side. Hence
\[ R^2 \gamma^{2 + 6k(1-\alpha+2\varepsilon)} \ll R^{2-1/k}. \]
This leads to the desired upper bound on~$R$.

\subsection{The case of small~$\gamma$}\label{sec:proof-small-gamma}

In the remainder of this section, we will assume that~$\gamma^{-1}\geq \min((\log x)^B, y^c)$ for some large enough~$B = B(k,\ee) > 0$. In particular, this implies that either~$\gamma^{-1} \geq (\log x)^B$ or~$\gamma^{-(1-\alpha)} \geq (\log x)^{c/2}$. Let~$K=(\gamma^{-1}\log x)^{A}$ be a parameter, where~$A = A(k) > 0$ is a large constant to be specified later. By the assumption~$\gamma \geq x^{-c}$ we may assume that~$K \leq x^{1/2k}$.  Observe that any integer in~$S(x, y)$ can be written as a product~$mn$, where~$m\in [x(yK)^{-1},xK^{-1}]$ is~$y$-friable, and~$n\leq xm^{-1}$. In this way we get from~\eqref{eq:mean0}
\[ \sum_{\substack{x(yK)^{-1}\leq m\leq xK^{-1}\\ P^+(m)\leq y}} \sum_{1 \leq n \leq xm^{-1}} \left|\sum_{1 \leq r \leq R} \eta_r \e(n^km^k\vth_r)\right|^2\geq \gamma^2R^2\Psi(x,y). \]
Expand the square and move the sum over~$n$ inside to get
\begin{equation}\label{eq:mean1} 
\sum_{\substack{x(yK)^{-1}\leq m\leq xK^{-1}\\ P^+(m)\leq y}} \sum_{1\leq r,s\leq R}\bigg|\sum_{1\leq n\leq xm^{-1}} \e(n^km^k(\vth_r-\vth_s))\bigg|\geq \gamma^2R^2\Psi(x,y). 
\end{equation}
This is similar as~\eqref{eq:large-gamma-rs} in Section~\ref{sec:proof-large-3}, but we have arranged the inner sum to be a complete Weyl sum, at some cost since the trivial bound for the left hand side is now larger. The assumption~$\gamma^{-1}\geq \min((\log x)^B, y^c)$ will ultimately ensure that this cost is acceptable.

It is convenient to perform a dyadic division in~$m$. For each~$M\in [x(yK)^{-1},xK^{-1}]$ and~$\vth\in\R$, define
\begin{equation}\label{eq:IM-defn} 
I_M(\vth) = \ssum{M \leq m \leq 2M\\ P^+(m)\leq y} \bigg|\sum_{1\leq n\leq xm^{-1}} \e(n^km^k\vth)\bigg|,
\end{equation}
and
\begin{equation}\label{eq:IM-defn-2}
I_M = \sum_{1 \leq r,s \leq R} I_M(\vth_r - \vth_s). 
\end{equation}
For ease of notation we write~$N=xM^{-1}$ so that~$N\in [K,yK]$. We will show in Sections~\ref{sec:proof-large-1} and~\ref{sec:proof-large-2}, that for all fixed~$\ee>0$,
\begin{equation}\label{eq:IM-bound} 
I_M \ll_{\varepsilon} R^2 N \Psi(2M,y) (R^{-1/k} + K^{-c}) K^{1-\alpha+\varepsilon} (\log x). 
\end{equation}
Let us temporarily assume~\eqref{eq:IM-bound} and deduce the conclusion of Proposition~\ref{prop:mv}. Note that~$\Psi(2M, y) \ll N^{-\alpha} \Psi(x,y)$ from Lemma~\ref{lem:adam-smooth-2}. We may combine~\eqref{eq:IM-bound} with~\eqref{eq:mean1} and obtain, after summing over~$M$ (or~$N$) dyadically, that
\[  \gamma^2 R^2 \Psi(x,y) \ll R^2 \Psi(x,y) (R^{-1/k} + K^{-c}) K^{2(1-\alpha) + \varepsilon} (\log x)^3, \]
where we used the following estimate for the dyadic sum:
\[ \sum_{0 \leq j \leq \lceil \log_2 y\rceil} (2^j K)^{1-\alpha} \ll \frac{(yK)^{1-\alpha}}{2^{1-\alpha}-1} \ll K^{1-\alpha}(\log x)^2. \]
This simplifies to
\[ \gamma^2 \ll (R^{-1/k} + K^{-c}) K^{2(1-\alpha)+\varepsilon} (\log x)^3. \]
If the second term on the right hand side dominates, then
\[ \gamma^2 \ll K^{-c + 2(1-\alpha) + \varepsilon} (\log x)^3 \ll K^{-c/2} (\log x)^3, \]
and thus~$K \ll (\gamma^{-1} \log x)^{8/c}$, contradicting our choice of~$K$ if~$A$ is large enough. Thus we must have
\[ \gamma^2 \ll R^{-1/k} K^{2(1-\alpha) + \varepsilon} (\log x)^3. \]
After rearranging and recalling the choice of~$K$ we get
\[ R \ll \gamma^{-2k} K^{2k(1-\alpha) + k\varepsilon} (\log x)^{3k} = \gamma^{-2k - 2kA(1-\alpha) - kA\varepsilon} (\log x)^{3kA}. \]
Since either~$\gamma^{-1} \geq (\log x)^B$ or~$\gamma^{-(1-\alpha)} \geq (\log x)^{c/2}$, the~$(\log x)^{3kA}$ term can be absorbed so that
\[ R \ll  \gamma^{-2k - O(1-\alpha) - 2kA\varepsilon}. \]
The proof is completed after reinterpreting~$\varepsilon$ by~$\varepsilon/(10kA)$. We are therefore left to prove the bound~\eqref{eq:IM-bound}.

\subsection{Handling the minor arcs}\label{sec:proof-large-1}

Fix~$M \in [x(yK)^{-1}, xK^{-1}]$ and~$N = xM^{-1} \in [K, yK]$. In this section we prove that
\begin{equation}\label{eq:IM-theta-minor}
I_M(\vth) \ll N K^{-c}  \Psi(2M, y)
\end{equation} 
whenever~$\vth \in \Fn$, where the minor arc~$\Fn$ is the complement of~$\FN = \major(K^{1/2}, x)$ (recall the notation~\eqref{eq:def-majorarcs}). In particular, this means that those pairs~$(r,s)$ with~$\vth_r - \vth_s \in \Fn$ make an acceptable contribution in the sum~\eqref{eq:IM-defn-2} towards the bound in~\eqref{eq:IM-bound}.

For the rest of this subsection, fix some~$\vth \in \Fn$. We also need the auxiliary major arc~$\FQ = \major(K^{\eta},N)$ for some small~$\eta > 0$ to be specified later. Let~$\Fq$ be the complement of~$\FQ$. If~$m^k\vth \in \Fq$ for some~$m \in [M,2M]$, then by Weyl's inequality (Lemma~\ref{lemme:Weyl-qpetit})
\[ \bigg|\sum_{1\leq n\leq xm^{-1}} \e(n^km^k\vth)\bigg| \ll N K^{-\sigma\eta}\]
for some~$\sigma = \sigma(k) > 0$. Hence,
\[
I_M(\vth) =  \ssum{M\leq m\leq 2M\\ P^+(m)\leq y\\ m^k\vth\in\FQ} \bigg|\sum_{1\leq n\leq xm^{-1}} \e(n^km^k\vth)\bigg| + O(N K^{-\sigma\eta} \Psi(2M,y)).
\]
Bounding the inner sum over~$n$ above trivially by~$O(N)$, we reduce~\eqref{eq:IM-theta-minor} to proving the bound
\begin{equation}\label{eq:IM1}
\ssum{M\leq m\leq 2M\\ P^+(m)\leq y} \1_{m^k\vth \in \FQ}  \ll K^{-c} \Psi(2M,y).
\end{equation}
We will now divide into two cases, depending on whether or not~$\vth$ lies in the auxiliary major arcs~$\FP = \major(K^{1/5}, M)$ (which is wider than~$\FN$). Let~$\Fp$ be the complement of~$\FP$. We use the Erd\"{o}s-Tur\'{a}n inequality when~$\vth\in\Fp$, and use the combinatorial lemma, Lemma~\ref{lem:vinogradov}, when~$\vth \in \FP \cap \Fn$.

\begin{case}

First assume that~$\vth \in \Fp$. Since~$\FQ$ is the union of at most~$K^{2\eta}$ intervals of length at most~$2K^{\eta}N^{-k}$, the Erd\"{o}s-Turan inequality (Lemma~\ref{lem:erdos-turan}) gives
\begin{equation}\label{eq:et} 
\ssum{M \leq m\leq 2M\\ P^+(m)\leq y} \mathbf{1}_{ m^k\vth\in \FQ } \ll K^{2\eta}\bigg( \frac{K^{\eta}}{N^k}\Psi(2M,y) + \frac{\Psi(2M,y)}{J} + \sum_{j\leq J} \frac{1}{j} \bigg|\ssum{M \leq m\leq 2M \\ P^+(m) \leq y} \e(m^k j\vth) \bigg| \bigg),
\end{equation}
where~$J=K^{4\eta}$. The first two terms clearly make an acceptable contribution towards the bound in~\eqref{eq:IM1}. Thus it suffices to show that for each~$1\leq j\leq J$ we have
\begin{equation}\label{eq:minor} 
\bigg|\ssum{M \leq m\leq 2M\\ P^+(m)\leq y} \e(m^k j\vth) \bigg| \ll K^{-c} \Psi(2M,y),
\end{equation}
and then~\eqref{eq:IM1} follows if~$\eta$ is chosen small enough. Now fix~$j \leq J = K^{4\eta}$. Since~$\vth \notin \FP = \major(K^{1/5}, M)$, a moment's thought reveals that~$j \vth \notin \major(K^{1/5-4\eta}, M)$.  The desired bound~\eqref{eq:minor} then follows from Proposition~\ref{prop:estim-minor}.


\end{case}

\begin{case}
Now let~$\vth \in \FP = \major(K^{1/5}, M)$. We may choose~$0 \leq a \leq q \leq K^{1/5}$ with~$(a,q) = 1$, such that~$\vth \in \major(q,a; K^{1/5}, M)$.  Let~$A := \{m\in [M,2M], P^+(m)\leq y\}$, and assume that the proportion of elements~$m \in A$ satisfying~$m^k\vth \in \FQ = \major(K^{\eta}, N)$ is~$\delta$. Suppose for contradiction that~$\delta \geq K^{-c}$. We wish to show that this contradicts our hypothesis~$\vth \in \Fn$. 

If~$m \in A$ satisfies~$m^k \vth \in \FQ$, then~$\|m^k q_m \vth\| \leq K^{\eta}/N$ for some~$q_m \leq K^{\eta}$. By the pigeonhole principle, we may find~$q' \leq K^{\eta}$, such that the proportion of elements~$m \in A$ satisfying~$\|m^k q' \vth\| \leq K^{\eta}/N^k$ is at least~$\delta K^{-\eta}$. In particular, for these~$m$ we have
\[ \|m^k (q'q\vth)\| \leq K^{1/5+\eta}/N^k. \]
We will soon apply Lemma~\ref{lem:vinogradov} to the set~$A$ and the phase~$q'q\vth$, with~$\varepsilon = K^{1/5+\eta}/N^k$, but before that we need to figure out the permissible choices of the parameters~$L$ and~$\Delta$. Since
\[ \|q'q \vth\|  \leq K^{\eta} \|q\vth\| \leq K^{1/5+\eta}/M^k, \]
the condition~$\|q'q\vth\| \leq \varepsilon/(LM^{k-1})$ is satisfied with the choice~$L = M/N^k$. By Lemma~\ref{lem:adam-smooth-3}, for any arithmetic progression~$P \subset [M,2M] \cap \Z$ of length at least~$L$ we have
\[ |A \cap P| \ll |P|^{\alpha} \frac{\Psi(2M,y)}{M^{\alpha}}  \log M. \]
Thus we may choose~$\Delta$ with 
\[ \Delta \ll \left(\frac{M}{|P|}\right)^{1-\alpha} \log M \ll  N^{k(1-\alpha)} \log x \leq K^{k(1-\alpha)} (\log x)^{2k+1} \leq K^{1/4}, \]
where we used~$y^{1-\alpha} \ll (\log x)^2$ and~$(\log x)^{2k+1} \leq K^{1/8}$ if~$A$ (in the choice of~$K$) is large enough. The conclusion of Lemma~\ref{lem:adam-smooth-3} then says that either
\[ K^{1/5+\eta}/N^k \gg \delta K^{-1/4-\eta}, \]
or else
\[ \|q' q \vth\| \ll K^{1/4} (\delta K^{-\eta})^{-1} K^{1/5+\eta} /(MN)^k = \delta^{-1} K^{9/20+2\eta} x^{-k}. \]
The first case clearly implies that~$\delta \ll K^{-1/2}$, a contradiction. In the second case, since~$\delta^{-1} \leq K^c$ we have
\[ \|q'q \vth\| \leq K^{9/20+2\eta+c} x^{-k}. \]
Recalling~$q'q \leq K^{1/5+\eta}$, this implies~$\vth \in \FN$, giving the desired contradiction.

\end{case}

\subsection{Handling the major arcs}\label{sec:proof-large-2}

In view of~\eqref{eq:IM-theta-minor}, in order to prove~\eqref{eq:IM-bound} it suffices to show that
\begin{equation}\label{eq:mean2} 
\ssum{1\leq r,s\leq R\\ \vth_r-\vth_s\in\FN} I_M(\vth_r-\vth_s) \ll R^2 N \Psi(2M,y) (R^{-1/k} + K^{-1}) K^{1-\alpha+\varepsilon} (\log x).
\end{equation}
If $\vth \in \FN$ then~$m^k\vth$ also lies in appropriate major arcs so that the inner sum over~$n$ in the definition of~$I_M(\vth)$ in~\eqref{eq:IM-defn} can be controlled quite precisely. This analysis will lead to the following lemma (compare with Lemma~\ref{lem:large-gamma} above).

\begin{lemma}\label{lem:IM-major}
Let the notations be as above. Suppose that~$\vth\in\major(q,a;K^{1/2},x)$ for some~$0 \leq a \leq q\leq K^{1/2}$ and~$(a,q)=1$. Write~$\vth=a/q+\delta$ and let~$\CQ = q(1+|\delta| x^k)$. Then
\[
I_M(\vth) \ll_{\varepsilon}  N \Psi(2M,y)\CQ^{-1/k} q^{(1-\alpha)/k+\varepsilon}
\]
for any~$\varepsilon>0$.
\end{lemma}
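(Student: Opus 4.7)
The strategy is to apply a classical major arc Weyl bound to the inner sum over $n$, then estimate the resulting friable sum over $m$ by a short sieve argument. Write $\vth = a/q + \delta$, so that $|\delta| \leq K^{1/2}/(qx^k)$. For each $m$ with $M \leq m \leq 2M$, set $N = x/m \in [K, 2yK]$ and put $m^k \vth$ in lowest terms as $a_m/q_m + \delta_m$, where $q_m = q/(m^k, q)$, $(a_m, q_m) = 1$, and $\delta_m = m^k \delta$. The key observation is the identity
\[ q_m(1 + |\delta_m| N^k) \;=\; \frac{q(1 + |\delta| x^k)}{(m^k, q)} \;=\; \frac{\CQ}{(m^k, q)}. \]

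A standard major arc estimate for a complete Weyl sum---via the asymptotic
\[ \sum_{n \leq N} \e(n^k m^k\vth) \;=\; q_m^{-1} S(q_m, a_m) \int_0^N \e(\delta_m t^k) \dd t \;+\; O\bigl(q_m^{1/2+\varepsilon}(1 + N^k|\delta_m|)^{1/2}\bigr) \]
combined with the classical Gauss sum bound $|S(q_m, a_m)| \ll q_m^{1-1/k+\varepsilon}$---then yields
\[ \Bigl|\sum_{1 \leq n \leq N} \e(n^k m^k \vth)\Bigr| \;\ll_\varepsilon\; N \Bigl(\frac{(m^k, q)}{\CQ}\Bigr)^{1/k} q^{\varepsilon}. \]
Under our hypotheses $q_m \leq K^{1/2}$ and $|\delta_m| N^k \leq K^{1/2}$, the error is negligible relative to the main term for $k \geq 2$, while the case $k = 1$ is trivial from the geometric series. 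Summing over friable $m \in [M, 2M]$, the lemma reduces to establishing the combinatorial bound
\[ \Sigma \;:=\; \sum_{\substack{M \leq m \leq 2M \\ P^+(m) \leq y}} (m^k, q)^{1/k} \;\ll_\varepsilon\; \Psi(2M, y)\, q^{(1-\alpha)/k + \varepsilon}. \]

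For this, split $\Sigma$ according to $d := (m^k, q) \mid q$ and relax the equality to the divisibility $d \mid m^k$. The latter is equivalent to $d^\ast \mid m$, where $d^\ast := \prod_{p \mid d} p^{\lceil v_p(d)/k \rceil}$ is the least positive integer satisfying $d \mid (d^\ast)^k$; in particular, $d^\ast \geq d^{1/k}$. Lemma~\ref{lem:adam-smooth-2} then yields
\[ \#\{M \leq m \leq 2M : P^+(m) \leq y, \ d \mid m^k\} \;\ll\; (d^\ast)^{-\alpha} \Psi(2M, y) \;\leq\; d^{-\alpha/k} \Psi(2M, y), \]
so that
\[ \Sigma \;\leq\; \Psi(2M, y) \sum_{d \mid q} d^{(1-\alpha)/k} \;\leq\; \Psi(2M, y)\, \tau(q)\, q^{(1-\alpha)/k} \;\ll_\varepsilon\; \Psi(2M, y)\, q^{(1-\alpha)/k + \varepsilon}. \]
The main technical step is to establish the pointwise Weyl estimate with the full exponent $1/k$ (rather than the $\sigma(k) < 1/k$ afforded by Lemma~\ref{lemme:Weyl-qpetit}); this is entirely classical once one verifies that $m^k\vth$ lies deep inside a classical major arc of length $N$, which our hypotheses on $q$ and $\delta$ readily secure. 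Combining the two bounds above and rescaling $\varepsilon$ gives the lemma.
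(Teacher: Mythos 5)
Your proof is correct and follows essentially the same route as the paper: you apply the standard major arc asymptotic (Vaughan, Theorems 4.1/4.2) to the inner complete Weyl sum, use the Gauss sum bound $S(q_m,a_m)\ll q_m^{1-1/k+\varepsilon}$ together with the singular integral estimate, and then bound the resulting friable sum $\sum_m (m^k,q)^{1/k}$ via divisor splitting and Lemma~\ref{lem:adam-smooth-2}. The only small point worth flagging is that when you invoke Lemma~\ref{lem:adam-smooth-2} for $\Psi(2M,y)$, the saddle point is $\alpha(2M,y)$, which is $\geq \alpha(x,y)$ since $2M\leq x$; the paper states this monotonicity explicitly, whereas you leave it implicit. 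Your introduction of $d^*=\prod_{p\mid d}p^{\lceil v_p(d)/k\rceil}$ to resolve $d\mid m^k$ is a slightly cleaner way to phrase the paper's bound $\Psi(2M/d^{1/k},y)$.
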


\begin{proof}
Recall the definition of~$I_M(\vth)$ from~\eqref{eq:IM-defn}. Fix~$m \in [M,2M]$, and write~$q' = q/(q,m^k)$ and~$a'=am^k/(q,m^k)$. From standard major arc estimates for complete Weyl sums (see Lemma 2.8, Theorem 4.1, and Theorem 4.2 in~\cite{Vaughan97}), we have
\[ \sum_{1\leq n\leq xm^{-1}} \e(n^km^k\vth)=q'^{-1}S(q',a')v(\delta m^k)+O(\CQ^{1/2} q^{\varepsilon}), \]
where the (local) singular series~$S(q',a')$ and the (local) singular integral satisfy the bounds
\[ S(q',a')\ll q'^{1-1/k}, \ \ v(\beta)\ll \min(N,\|\beta\|^{-1/k}) \]
for~$|\beta| \leq 1/2$. It follows that
\[ \sum_{1\leq n\leq xm^{-1}} \e(n^km^k\vth)\ll  N \left( \frac{(q,m^k)}{\CQ} \right)^{1/k} +\CQ^{1/2} q^{\varepsilon} .  \]
Since~$\CQ \ll K^{1/2}$, the term~$\CQ^{1/2} q^{\varepsilon}$ clearly makes an acceptable contribution towards the desired bound for~$I_M(\vth)$. The first term contributes
$$ N \CQ^{-1/k} \ssum{M \leq m \leq 2M\\ P^+(m)\leq y} (q,m^k)^{1/k}. $$
The sum here is at most
$$ \sum_{d|q} d^{1/k} \ssum{M \leq m \leq 2M \\ P^+(m)\leq y \\ d | m^k} 1 \leq \sum_{d|q} d^{1/k}\Psi(2M/d^{1/k}, y) \ll q^{(1-\alpha)/k}\tau(q) \Psi(2M, y) $$
by using Lemma~\ref{lem:adam-smooth-2} and the inequality~$\alpha(2M, y) \geq \alpha(x, y)$. This completes the proof of the lemma.
\end{proof}

We are now in a position to apply Lemma~\ref{lem:restriction}. Let~$Q = K^{1/2}$ and~$\Delta = Qx^{-k}$. Consider the function~$G = G_{x^k, Q, \Delta}$ defined by
\[ G(\vth) = \sum_{q \leq Q} \frac{1}{q} \sum_{a=0}^{q-1} \frac{ \1_{\|\vth - a/q\| \leq \Delta} }{1 + x^k\|\vth - a/q\|}. \]
Lemma~\ref{lem:IM-major} implies that
\[ I_M(\vth) \ll N \Psi(2M,y) G(\vth)^{1/k} K^{1-\alpha+\varepsilon} \]
whenever~$\vth \in \FN$. Therefore,
\[ \ssum{1\leq r,s\leq R\\ \vth_r-\vth_s\in\FN} I_M(\vth_r-\vth_s) \ll N \Psi(2M,y) K^{1-\alpha+\varepsilon} \sum_{1 \leq r,s \leq R} G(\vth_r - \vth_s)^{1/k}. \]
To prove~\eqref{eq:mean2} it thus suffices to show that
\[ \sum_{1 \leq r,s \leq R} G(\vth_r - \vth_s)^{1/k} \ll R^{2} (R^{-1/k} + K^{-1}) K^{\varepsilon} (\log x) \]
for any~$\varepsilon > 0$. This is a straightforward consequence of H\"{o}lder's inequality and Lemma~\ref{lem:restriction}:
\begin{align*}
\sum_{1\leq r,s\leq R} G(\vth_r-\vth_s)^{1/k} &\leq R^{2(k-1)/k} \bigg(\sum_{1\leq r,s\leq R} G(\vth_r-\vth_s) \bigg)^{1/k} \\
& \ll R^{2(k-1)/k} \left[ (RK^{\epsilon} + x^{-k}R^2K^{1/2} + K^{-k} R^2) \log x \right]^{1/k},
\end{align*}
noting that the second term on the right hand side is dominated by the third term since~$x^{-k} K^{1/2} \leq K^{-k}$. This completes the proof of~\eqref{eq:mean2}, hence of~\eqref{eq:IM-bound}. By the arguments at the end of Section~\ref{sec:proof-small-gamma}, we have finished the proof of Proposition~\ref{prop:mv}.

\section{Waring's problem in friable variables}\label{sec:waring}

In this section we prove Theorem~\ref{thm:waringk}, getting an asymptotic formula for the number of representations of a large enough positive integer~$N$ as the sum of~$s$~$k$th powers of~$(\log N)^C$-friable numbers for some sufficiently large~$C$, as long as~$s$ exceeds a threshold depending on~$k$ which is essentially the same as that in the classical Waring's problem. 

Let notations and assumptions be as in the statement of Theorem~\ref{thm:waringk}. We start by defining the archimedian factor~$\beta_{\infty}$ and the local factors~$\beta_p$ that appear in the statement of Theorem~\ref{thm:waringk}.

\begin{defi}[The archimedian factor]
The archimedean factor~$\beta_{\infty}$ is defined by
\begin{equation}\label{eq:arch-factor} 
\beta_{\infty} = \int_{-\infty}^{+\infty} \cPhi(\delta,\alpha)^s e(-\delta) \dd\delta, 
\end{equation}
where~$\cPhi$ is defined in~\eqref{eq:def-cPhi}.
\end{defi}

We have the following explicit formula for~$\beta_\infty$, showing that~$\beta_{\infty} \asymp_s 1$ as long as~$\alpha$ is bounded away from~$0$.

\begin{prop}\label{prop:beta-infty}
The archimedian factor~$\beta_{\infty}$ defined above satisfies
\[ \beta_{\infty} =  \Gamma(s\alpha/k)^{-1} \Gamma(\alpha/k+1)^s. \]
\end{prop}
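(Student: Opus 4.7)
The plan is to recognise $\cPhi(\delta,\alpha)$ (after a change of variable) as the Fourier transform of an explicit function supported on $[0,1]$, so that $\beta_\infty$ becomes the value at $1$ of an $s$-fold convolution, which is then evaluated by the classical Dirichlet integral.

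First I would substitute $u = t^k$ in the definition~\eqref{eq:def-cPhi} to rewrite
\[
\cPhi(\delta,\alpha) \;=\; \frac{\alpha}{k}\int_0^1 u^{\alpha/k-1}\,\e(\delta u)\,\dd u \;=\; \int_{\R} g(u)\,\e(\delta u)\,\dd u,
\]
where $g(u) := (\alpha/k)\,u^{\alpha/k-1}\mathbf{1}_{[0,1]}(u)\in L^1(\R)$. Expanding the $s$-th power and switching the order of integration (justified by Fubini since $g\in L^1$) yields
\[
\cPhi(\delta,\alpha)^s \;=\; \int_{\R} g^{*s}(v)\,\e(\delta v)\,\dd v,
\]
where $g^{*s}$ is the $s$-fold convolution of $g$ with itself, supported in $[0,s]$.

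Next I would invoke Fourier inversion to conclude that $\beta_\infty = g^{*s}(1)$. This requires that $\cPhi(\,\cdot\,,\alpha)^s\in L^1(\R)$, which follows from Lemma~\ref{lem:phi-check} and our assumption $s>k\geq s\alpha/k$ is comfortably $>1$ (giving decay $|\cPhi|^s\ll(1+|\delta|)^{-s\alpha/k}$), and that $g^{*s}$ is continuous at $1$, which holds for $s\geq 2$ because the convolution of $L^p$ functions with conjugate exponents is continuous (choosing $p$ with $\alpha/k-1>-1/p$ is possible since $\alpha/k<1$). Alternatively, one can truncate the $\delta$-integral to $[-T,T]$, apply Fubini to produce a Dirichlet kernel $\sin(2\pi T(\sum u_i-1))/(\pi(\sum u_i - 1))$ against the weight $(u_1\cdots u_s)^{\alpha/k-1}$ on $[0,1]^s$, and pass to the limit $T\to\infty$ using the standard fact that this kernel is an approximation to the Dirac mass.

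Finally, evaluating the convolution,
\[
g^{*s}(1) \;=\; \Big(\frac{\alpha}{k}\Big)^{\!s}\!\!\int_{\substack{u_1,\dots,u_s\geq 0\\ u_1+\cdots+u_s=1}}(u_1\cdots u_s)^{\alpha/k-1}\,\dd\sigma_{s-1},
\]
and the classical Dirichlet integral (with all parameters equal to $\alpha/k$) gives
\[
\int_{\substack{u_i\geq 0\\ \sum u_i=1}}(u_1\cdots u_s)^{\alpha/k-1}\,\dd\sigma_{s-1} \;=\; \frac{\Gamma(\alpha/k)^s}{\Gamma(s\alpha/k)}.
\]
Using the functional equation $x\Gamma(x)=\Gamma(x+1)$ to absorb the prefactor $(\alpha/k)^s$ yields $\beta_\infty = \Gamma(s\alpha/k)^{-1}\Gamma(\alpha/k+1)^s$ as claimed. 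The only non-routine point is the Fourier-inversion step, which is the main obstacle but is handled by one of the two standard arguments above.
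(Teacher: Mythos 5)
Your proof is correct and takes essentially the same approach as the paper: change variables to recognise $\cPhi(\cdot,\alpha)$ as the Fourier transform of $(\alpha/k)u^{\alpha/k-1}\mathbf{1}_{[0,1]}(u)$, apply Fourier inversion to get $\beta_\infty = g^{*s}(1)$, and evaluate the resulting Dirichlet integral. The paper cites an exercise in Tenenbaum's book for the final evaluation rather than the Dirichlet integral directly, and is terser about the inversion step, but these are the same argument.
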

\begin{proof}
A change of variables~$t\gets t^{1/k}$ shows that~$\delta\mapsto\cPhi(\delta, \alpha)$ is the Fourier transform of~$\Phi_{\alpha}(t) := (\1_{0<t<1} )(\alpha/k)t^{\alpha/k-1}$. Fourier inversion then implies that~$\beta_\infty$ is the value of the convolution~$s$-th power~$(\Phi_{\alpha})^{\ast s}(1)$. This value is computed using \textit{e.g.}~\cite[Exercice~144]{ITAN} applied with~$n\gets s-1$ and~$f$ approaching~$u\mapsto (1-u)^{\alpha-1}$.
\end{proof}

To define the non-archimedian factors, we first define a probability measure~$\mu_q$ on~$\Z/q\Z$ for~$q=p^m$ a prime power, reflecting the bias that friable numbers are more likely to be divisible by a given small prime. For~$b \in \Z/q\Z$ with~$(b, p^m) = p^v$ for some~$0 \leq v \leq m$, we define
\[ \mu_{p^m}(b) = \begin{cases}
0 & v>0\text{ and }p>y, \\
\varphi(p^m)^{-1} & v=0\text{ and }p>y, \\
\varphi(p^m)^{-1}p^{(1-\alpha)v}(1-p^{-\alpha}) & v<m\text{ and }p\leq y, \\ 
p^{-\alpha m} & v=m\text{ and }p\leq y.
\end{cases} \]
Note that the value of~$\mu_{p^m}(b)$ depends only on~$v$. This is consistent with the heuristic model suggested by the approximation 
$$ \Psi(x/p^m, y) \approx p^{-m\alpha} \Psi(x, y), $$
(see~\cite[Th\'{e}or\`{e}me~2.4]{BT2005}).

\begin{defi}[The local factors]
For~$p$ prime, the local factor~$\beta_p$ is defined by
\begin{equation}\label{eq:local-factor} 
\beta_p = \lim_{m\rightarrow\infty} p^m \sum_{\substack{n_1,\cdots,n_s\mod{p^m}\\ n_1^k+\cdots+n_s^k\equiv N\mod{p^m}}} \mu_{p^m}(n_1) \cdots\mu_{p^m}(n_s)
\end{equation}
whenever the limit exists.
\end{defi}

Note that the sum above is the probability of the event~$n_1^k+\cdots+n_s^k\equiv N\mod{p^m}$ when~$n_1,\cdots,n_s$ are chosen according to the probability measure~$\mu_{p^m}$. When~$\alpha = 1$ and~$p \leq y$ this reduces to the uniform measure. In the appendix we will prove that the limit in~\eqref{eq:local-factor} does exist, and that the following estimates on the local factors hold.

\begin{prop}\label{prop:beta-p}
The local factors~$\beta_p$ are well-defined for every~$p$ and satisfy
\[ \prod_{p} \beta_p \asymp 1, \]
whenever~$\alpha > 2k/s$ and~$s \geq s_0(k)$ for some constant~$s_0(k)$. Moreover, we may take~$s_0(1)=3$, $s_0(2) = 5$, $s_0(3) = 5$, and~$s_0(k) =  O(k)$ for large~$k$.
\end{prop}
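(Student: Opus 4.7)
The strategy is to identify~$\beta_p$, after Fourier inversion modulo~$p^m$, with the local factor of the singular series built from the arithmetic function~$H_{a/q}(\alpha)$ of~\eqref{eq:def-Hq}. By orthogonality of additive characters,
\[ p^m \sum_{\substack{n_1,\ldots,n_s\mod{p^m} \\ n_1^k+\cdots+n_s^k\equiv N\mod{p^m}}}\!\!\! \mu_{p^m}(n_1)\cdots\mu_{p^m}(n_s) \;=\; \sum_{a\mod{p^m}} \widehat{\mu}_{p^m}(a)^s\,\e(-aN/p^m), \]
where $\widehat{\mu}_{p^m}(a):=\sum_{n\mod{p^m}}\mu_{p^m}(n)\,\e(an^k/p^m)$. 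The central identity I would establish is
\[ \widehat{\mu}_{p^m}(a) \;=\; H_{b/p^j}(\alpha) \]
whenever $a=bp^{m-j}$ with $(b,p)=1$ (or $a=0$, $j=0$) and $m$ is large in terms of~$j$. This is a direct check: splitting $n\mod{p^m}$ by $v=v_p(n)$, writing $n=p^v c$ with $(c,p)=1$, and applying the piecewise definition of~$\mu_{p^m}$, the high-valuation range $vk\geq j$ telescopes to $p^{-\alpha\lceil j/k\rceil}$, while the low-valuation range produces exactly the Gauss-sum terms appearing in~\eqref{eq:def-Hq}.

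Regrouping the Fourier sum by the reduced denominator~$p^j$ of~$a/p^m$ then rewrites its left-hand side as
\[ \sum_{j=0}^{m}\,\sum_{\substack{b\mod{p^j}\\(b,p)=1\text{ if }j\geq 1}} H_{b/p^j}(\alpha)^{s}\,\e(-bN/p^j). \]
Since this stabilises as $m\to\infty$, the limit~\eqref{eq:local-factor} exists. Lemma~\ref{lem:Haq-bound} yields $|H_{b/p^j}(\alpha)|^{s}\ll_\varepsilon p^{j(-s\alpha/k+s\varepsilon)}$; summing over the at most $p^j$ admissible~$b$, the $j$-th block is $O_\varepsilon(p^{j(1-s\alpha/k+s\varepsilon)})$. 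Under the hypothesis $s\alpha>2k$, this exponent is strictly less than $-1$ for small~$\varepsilon$, and geometric summation gives
\[ \beta_p = 1 + O_\varepsilon(p^{-1-\delta}) \]
uniformly in~$p$ larger than an absolute constant, for some $\delta=\delta(k,s,\alpha)>0$. In particular $\prod_p\beta_p$ converges absolutely and satisfies $\prod_{p>P_0}\beta_p=1+O(P_0^{-\delta})$.

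For the matching lower bound it suffices to show $\beta_p\gg 1$ uniformly over the finitely many primes $p\leq P_0$. For each such~$p$ the measure $\mu_{p^m}$ satisfies a pointwise bound $\mu_{p^m}(n)\gg_p p^{-m}$ (uniform in~$n$, as long as $\alpha$ stays bounded away from~$0$), which reduces positivity of~$\beta_p$ to the classical lower bound
\[ \#\bigl\{\vec{n}\in(\Z/p^m\Z)^s : \textstyle\sum_{i}n_i^k\equiv N\mod{p^m}\bigr\}\;\gg\;p^{m(s-1)}. \]
For $s\geq s_0(k)$ this is standard: Hensel lifting from a non-singular solution modulo~$p$, or modulo a suitable higher power of~$p$ at the exceptional primes $p\mid k$; see~\cite[Chapter~2]{Vaughan97}. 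The thresholds $s_0(1)=3$, $s_0(2)=5$, $s_0(3)=5$, and $s_0(k)=O(k)$ stated in the proposition are precisely those needed to guarantee such a non-singular solution for every sufficiently large~$N$. Combining this with the tail estimate above yields $\prod_p\beta_p\asymp 1$.

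The Fourier identification and the tail bound are essentially routine once the analogy with the circle-method singular series is in place; the delicate point will be the uniformity of the lower bound across primes $p\leq y$, where the biased measure $\mu_{p^m}$ deviates substantially from uniform. The implicit constant $c_p$ in $\mu_{p^m}(n)\gg c_p p^{-m}$ degrades as $\alpha\to 0$, so one must either track this dependence carefully or bypass the pointwise comparison by running a direct Hensel lift for the weighted sum, with extra care needed at the exceptional primes~$p\mid k$.
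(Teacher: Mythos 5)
Your proposal is correct and follows essentially the same route as the paper. The Fourier identification $\widehat{\mu}_{p^m}(a)=H_{b/p^j}(\alpha)$ is exactly the content of the paper's Lemmas~\ref{lem:S=H(a/q)} and~\ref{lem:mu-translate} (the paper packages it as $S(x,y;q,a)=H_{a/q}(\alpha)$ and then uses Lemma~\ref{lem:Sq-Mq} to derive $\beta_p=\sum_{\ell\geq 0}S(p^\ell)$); your tail bound via Lemma~\ref{lem:Haq-bound} matches the paper's use of Lemma~\ref{lem:Sqa}; and the lower bound via the pointwise comparison $\mu_{p^m}(n)\geq p^{-m}\frac{1-p^{-\alpha}}{1-p^{-1}}$ followed by the classical positivity of the singular series is precisely what the paper does. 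Your final worry about uniformity as $\alpha\to 0$ is not an issue here: the hypothesis $\alpha>2k/s$ (together with the running assumption that $1-\alpha$ is small) keeps $\alpha$ bounded away from~$0$, and only finitely many primes $p\leq P_0$ need the pointwise comparison, so the constant $\left(\frac{1-p^{-\alpha}}{1-p^{-1}}\right)^{s}$ is uniformly $\gg 1$; no Hensel lifting of the weighted count is required.
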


To prove Theorem~\ref{thm:waringk}, let~$Q=(\log x)^A$ for some sufficiently large constant~$A$.  Let~$\FM = \major(Q,x)$ (recall~\eqref{eq:def-majorarcs}), and let~$\Fm := [0, 1)\setminus \FM$ be its complement. By the circle method, the number of representations of~$N$ is 
\[ \int_0^1 E_k(x,y;\vth)^s \e(-N\vth) \dd\vth. \]
Theorem~\ref{thm:waringk} is easily seen to follow from the two lemmas below.

\begin{lemma}[Major arcs for Waring's problem]\label{lem:waring-major}
Let the notations and assumptions be as in the statement of Theorem~\ref{thm:waringk}, and let~$\FM$ be defined as above. Then
\[ \int_{\FM} E_k(x,y;\vth)^s \e(-N\vth) \dd\vth = x^{-k} \Psi(x,y)^s   \bigg( \beta_{\infty} \prod_p \beta_p +  O_s(u_y^{-1}) \bigg). \]
\end{lemma}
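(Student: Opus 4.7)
The plan is a routine circle-method computation built on top of Theorem~\ref{thm:estim-majo}. On each major arc $\vth = a/q + \delta \in \FM$ with $(a,q)=1$ and $\CQ = q(1+|\delta|x^k) \leq Q = (\log x)^A$, the theorem yields
\[
E_k(x,y;\vth) = \Psi(x,y)\, \cPhi(\delta x^k, \alpha) H_{a/q}(\alpha) + O\bigl(\Psi(x,y)\, \CQ^{-1/k + 2(1-\alpha) + \ee}\, u_y^{-1}\bigr),
\]
together with the uniform bound $E_k(x,y;\vth) \ll \Psi(x,y)\,\CQ^{-1/k+2(1-\alpha)+\ee}$. Writing $A = E_k(x,y;\vth)$ and $B$ for the main term, the telescoping $A^s - B^s = (A-B)\sum_{j=0}^{s-1} A^{s-1-j}B^j$ gives
\[
E_k(x,y;\vth)^s = \Psi(x,y)^s \cPhi(\delta x^k,\alpha)^s H_{a/q}(\alpha)^s + O\bigl(\Psi(x,y)^s \CQ^{-s/k + O(s(1-\alpha)) + s\ee}\, u_y^{-1}\bigr).
\]
Integrating the error term over $\FM$, substituting $\mu = \delta x^k$ in the inner integral, and summing $q^{1-s/k + O(1-\alpha)}$ over $q$, one gets a bound $O(x^{-k}\Psi(x,y)^s u_y^{-1})$ provided $s/k$ exceeds $2$ by a margin (which holds for $s \geq s_0(k)$ once $1-\alpha$ is small), hence also provided $A$ is large enough.

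For the main term I would swap the sum and integral to obtain
\[
\Psi(x,y)^s \sum_{q \leq Q} \sum_{\substack{0 \leq a < q \\ (a,q)=1}} H_{a/q}(\alpha)^s\, \e(-Na/q) \int_{|\delta| \leq Q/(qx^k)} \cPhi(\delta x^k,\alpha)^s\, \e(-N\delta)\, \dd\delta.
\]
In the inner integral, the change of variables $\mu = \delta x^k$ combined with $N = x^k$ turns $\e(-N\delta)$ into $\e(-\mu)$, and extending the range of $\mu$ to $\R$ produces $x^{-k}\beta_\infty$ with tail error controlled by $x^{-k}(Q/q)^{1-s\alpha/k}$ thanks to Lemma~\ref{lem:phi-check}. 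After summing the tail contribution in $q$, we are left with
\[
x^{-k} \Psi(x,y)^s \beta_\infty\, \mathfrak{S}_Q(N) + O\bigl(x^{-k}\Psi(x,y)^s u_y^{-1}\bigr),
\qquad \mathfrak{S}_Q(N) := \sum_{q \leq Q} \sum_{\substack{(a,q)=1}} H_{a/q}(\alpha)^s\, \e(-Na/q),
\]
where we use Lemma~\ref{lem:Haq-bound} (so $\sum_{(a,q)=1} |H_{a/q}|^s \ll q^{1-s\alpha/k+\ee}$) to bound both the tail of $\mathfrak{S}_Q$ and the error estimates, requiring $s\alpha/k > 2$.

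The main obstacle is then to identify $\lim_{Q \to \infty} \mathfrak{S}_Q(N)$ with $\prod_p \beta_p$. For this I would first verify that the coefficient $q \mapsto \sum_{(a,q)=1} H_{a/q}(\alpha)^s\, \e(-Na/q)$ is multiplicative, using the Chinese remainder decomposition both on the residue $b$ and on the divisors $d_1,d_2$ appearing in the definition~\eqref{eq:def-Hq} of $H_{a/q}$. Given multiplicativity, the local Euler factor at $p$ is $\sum_{m\geq 0} \sum_{(a,p^m)=1} H_{a/p^m}(\alpha)^s\, \e(-Na/p^m)$. To match this with $\beta_p$, one interprets the inner character sum of $H_{a/p^m}(\alpha)$ as the Fourier coefficient (at $a/p^m$) of the push-forward under $b \mapsto b^k$ of the local measure $\mu_{p^m}$, by a direct check case-by-case on $v = v_p(b)$ against the defining formula for $\mu_{p^m}$. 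Then Plancherel / Fourier inversion on $\Z/p^m\Z$ rewrites the Euler factor as $p^m$ times the probability that $n_1^k + \cdots + n_s^k \equiv N \pmod{p^m}$ under iid samples from $\mu_{p^m}$, which is precisely the definition~\eqref{eq:local-factor} of $\beta_p$ in the limit $m \to \infty$ (existence of the limit follows from the bound $A(p^m) \ll p^{m(1 - s\alpha/k) + m\ee}$, which forces stabilisation). Convergence of $\prod_p \beta_p$ and the final bound on the tail $\mathfrak{S}(N) - \mathfrak{S}_Q(N) \ll Q^{2 - s\alpha/k + \ee}$, absorbed into $u_y^{-1}$ by choosing $A$ large, complete the proof.
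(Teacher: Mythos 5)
Your proposal is correct and follows essentially the same route as the paper: expand $E_k^s$ on each major arc using Theorem~\ref{thm:estim-majo} (your telescoping identity is the standard way to convert the pointwise approximation of $E_k$ into one of $E_k^s$, matching the paper's use of the upper bounds in Lemmas~\ref{lem:phi-check} and~\ref{lem:Haq-bound}), integrate the error term using $t = s/k - O(1-\alpha) > 2$, complete the singular integral and singular series at cost $O((Q/q)^{1-s\alpha/k})$ and $O(Q^{2-s\alpha/k+\ee})$ respectively, and absorb these into $O(u_y^{-1})$ via the choice $Q = (\log x)^A$ with $A$ large. Your sketch of the identification of $\sum_q S(q)$ with $\prod_p \beta_p$ via multiplicativity of $S(q)$, the interpretation of $H_{a/q}(\alpha)$ as the Fourier coefficient of the pushforward of $\mu_q$ under $b \mapsto b^k$, and finite Fourier inversion on $\Z/p^m\Z$ is precisely the content of Lemmas~\ref{lem:S=H(a/q)}, \ref{lem:mu-translate}, \ref{lem:Sq-Mq} and \ref{lem:Sqa} in the paper's appendix.
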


\begin{lemma}[Minor arcs for Waring's problem]\label{lem:waring-minor}
Let the notations and assumptions be as in the statement of Theorem~\ref{thm:waringk}, and let~$\Fm$ be defined as above. Then
\[ \int_{\Fm} |E_k(x,y;\vth)|^s \dd\vth \ll_s  x^{-k} \Psi(x,y)^s Q^{-c}   \]
for some~$c = c(k) >0$.
\end{lemma}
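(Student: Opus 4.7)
The plan is to interpolate the pointwise minor arc bound of Theorem~\ref{thm:estim-minor} (or equivalently Proposition~\ref{prop:estim-minor}) against the mean value estimate of Theorem~\ref{cor:mv-k}. Since $s_0(k)$ in Theorem~\ref{thm:waringk} is chosen so that $s \geq s_0(k)$ guarantees the existence of an exponent $p$ with $p_0(k) < p < s$ (compare the stated values of $s_0$ and $p_0$ in the two theorems), we may fix such a $p$ at the outset. For instance, when $k \in \{1,2,3\}$ we have $s_0(k) = p_0(k) + 1$, so we may take $p = p_0(k) + 1/2$; for large $k$, we may take $p = p_0(k) + 1$ with the extra unit being absorbed in the $O(\log\log k / \log k)$ term.

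First I would apply the trivial interpolation
\begin{equation*}
\int_{\Fm} |E_k(x,y;\vth)|^s \, \dd\vth \;\leq\; \Bigl( \sup_{\vth \in \Fm} |E_k(x,y;\vth)| \Bigr)^{s-p} \int_0^1 |E_k(x,y;\vth)|^p \, \dd\vth.
\end{equation*}
Since $\Fm = [0,1) \setminus \major(Q,x)$ with $Q = (\log x)^A$, and since the hypothesis $y \geq (\log x)^C$ with $C$ large forces $y \geq (\log x)^K$ for the $K = K(k)$ appearing in Proposition~\ref{prop:estim-minor}, the supremum is bounded by $\Psi(x,y) Q^{-c_1}$ for some $c_1 = c_1(k) > 0$. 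For the $L^p$ integral, choosing $C$ large enough (depending on $p - p_0$) ensures $1 - \alpha \leq c_2 \min(1, p - p_0)$ via \eqref{eq:alpha-1}, so that Theorem~\ref{cor:mv-k} yields $\int_0^1 |E_k(x,y;\vth)|^p \, \dd\vth \ll_p \Psi(x,y)^p x^{-k}$.

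Combining these two bounds, the right-hand side of the displayed interpolation becomes
\begin{equation*}
\Psi(x,y)^{s-p} Q^{-c_1(s-p)} \cdot \Psi(x,y)^p x^{-k} \;=\; x^{-k} \Psi(x,y)^s Q^{-c_1(s-p)},
\end{equation*}
which is the desired bound with $c = c_1(s-p) > 0$.

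There is no substantive obstacle here; the only bookkeeping is to verify that the constant $C$ in the hypothesis on $y$ can be chosen uniformly to satisfy both the hypothesis $y \geq (\log x)^K$ of Proposition~\ref{prop:estim-minor} and the condition $1-\alpha \leq c_2(p-p_0)$ of Theorem~\ref{cor:mv-k}. Both requirements amount to taking $C = C(k,s)$ sufficiently large, which is permitted by the statement of Theorem~\ref{thm:waringk}. The real work has already been done: the pointwise bound is the content of Sections~\ref{sec:major}--\ref{sec:minor}, while the mean value estimate is the content of Sections~\ref{sec:mean}--\ref{sec:proof-large}.
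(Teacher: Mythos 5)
Your argument is correct and is essentially the paper's own proof: both interpolate by pulling out a small power of $|E_k|$ and bounding it by the sup over $\Fm$ via Proposition~\ref{prop:estim-minor}, then controlling the remaining $L^p$-norm by the mean value estimate of Theorem~\ref{cor:mv-k}. The only cosmetic difference is that the paper fixes the pulled-out exponent to $0.1$ (so the mean-value exponent is $s-0.1 > p_0(k)$), whereas you pick an intermediate $p \in (p_0(k), s)$; the parameter bookkeeping (choosing $C$ large enough that $1-\alpha$ is small and $y \geq (\log x)^K$) matches what the paper implicitly assumes.
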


Indeed, to deduce Theorem~\ref{thm:waringk} from Lemmas~\ref{lem:waring-major} and~\ref{lem:waring-minor}, it suffices to take~$Q = (\log x)^A$ for some large enough~$A$ so that~$Q^{-c} \ll u_y^{-1}$. In the remainder of this section, we prove the two lemmas.

\subsection{Major arc analysis}

We start by proving Lemma~\ref{lem:waring-major}. For~$\vth\in\major(q,a)$ for some~$0 \leq a \leq q\leq Q$ and~$(a,q)=1$, write~$\vth=a/q+\delta$ with~$|\delta| \leq Qx^{-k}q^{-1}$. Then~$\CQ = q(1+|\delta x^k|) \leq Q$. By Theorem~\ref{thm:estim-majo} we have
\[ \frac{E_k(x,y;\vth)}{\Psi(x,y)} =  \cPhi(\delta x^k,\alpha) H_{a/q}(\alpha) + O \left( \CQ^{-1/k + 2(1-\alpha) + \varepsilon} u_y^{-1} \right) \]
for any~$\ee>0$.  Since
\[ \cPhi(\delta x^k,\alpha) H_{a/q}(\alpha) \ll  \CQ^{-\alpha/k + \varepsilon} \ll \CQ^{-1/k + 1-\alpha + \varepsilon} \]
by Lemmas~\ref{lem:phi-check} and ~\ref{lem:Haq-bound}, we have
\begin{align*} 
\int_{\major(q,a)} \bigg( \frac{E_k(x,y;\vth)}{\Psi(x,y)} \bigg)^s \e(-N\vth) \dd\vth &= H_{a/q}(\alpha)^s \e(-aN/q)\int_{|\delta| \leq Qx^{-k}q^{-1}} \cPhi(\delta x^k,\alpha)^s \e(-N\delta) \dd\delta \\ &+ O\bigg( u_y^{-1}  \int_{|\delta| \leq Qx^{-k}q^{-1}} \CQ^{-s/k + 2s(1-\alpha) + \varepsilon} \dd\delta \bigg). 
\end{align*}
For~$s \geq s_0(k)$, the exponent~$t = s/k - 2s(1-\alpha) - \varepsilon$ satisfies~$t > 2$, and thus the integral in the error term above is bounded by
\[ q^{-t} \int_{|\delta| \leq Qx^{-k}q^{-1}} (1+|\delta x^k|)^{-t} \dd\delta \ll q^{-t} x^{-k}. \]
Moreover, we may extend the integral in the main term above to all of~$\delta \in \R$ with an error~$O(x^{-k} (Q/q)^{1-s\alpha/k})$ (see Lemma~\ref{lem:truncate-integral} below), so that
\[ \int_{\major(q,a)} \bigg( \frac{E_k(x,y;\vth)}{\Psi(x,y)} \bigg)^s \e(-N\vth) \dd\vth = x^{-k} \left( \beta_{\infty} H_{a/q}(\alpha)^s \e(-aN/q) + O(q^{-1+\ee} Q^{1-s\alpha/k} + u_y^{-1}q^{-t}) \right). \]
Summing over all~$0 \leq a \leq q \leq Q$ with~$(a,q)=1$, we obtain
\[
\int_{\FM} \bigg( \frac{E_k(x,y;\vth)}{\Psi(x,y)} \bigg)^s \e(-N\vth) \dd\vth = x^{-k} \left( \beta_{\infty} \sum_{q \leq Q} \sum_{(a,q) = 1} H_{a/q}(\alpha)^s \e(-aN/q) + O(Q^{2-s\alpha/k+\ee} + u_y^{-1}) \right)
\]
since~$\sum q^{-t+1} = O(1)$. The restriction~$q \leq Q$ in the sum above can be removed with an error~$O(Q^{2-s\alpha/k+\ee})$ (see Lemma~\ref{lem:truncate-series} below). Finally, for~$s \geq s_0(k)$, the exponent~$2-s\alpha/k$ is negative and bounded away from~$0$, and thus the error~$O(Q^{2-s\alpha/k+\ee})$ can be absorbed into~$O(u_y^{-1})$ if~$Q = (\log x)^A$ with~$A$ large enough. This completes major arc analysis.

\begin{lemma}[Truncated singular integral]\label{lem:truncate-integral}
Let the notations and assumptions be as above. For any~$\Delta \geq 1$, we have
\[ \int_{|\delta|\leq \Delta x^{-k}} \cPhi(\delta x^k,\alpha)^s \e(-N\delta) \dd\delta  = x^{-k} \big(\beta_{\infty} + O(\Delta^{1-s\alpha/k}) \big). \]
\end{lemma}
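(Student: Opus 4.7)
My plan is to reduce the claim to a simple tail bound via a linear change of variables. Setting $\delta' = \delta x^k$ and using $N = x^k$ (so that $\e(-N\delta) = \e(-\delta')$ and $\dd\delta = x^{-k}\dd\delta'$), the integral becomes
\[ \int_{|\delta| \leq \Delta x^{-k}} \cPhi(\delta x^k, \alpha)^s \e(-N\delta)\,\dd\delta = x^{-k} \int_{|\delta'| \leq \Delta} \cPhi(\delta', \alpha)^s \e(-\delta')\,\dd\delta'. \]
Recalling the definition~\eqref{eq:arch-factor} of $\beta_\infty$, it thus suffices to prove the tail bound
\[ \int_{|\delta'| > \Delta} \cPhi(\delta', \alpha)^s \e(-\delta')\,\dd\delta' \ll \Delta^{1 - s\alpha/k}; \]
the same estimate applied with $\Delta = 1$ will simultaneously verify the absolute convergence of $\beta_\infty$.

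For this tail bound, I will invoke Lemma~\ref{lem:phi-check} with $j = 0$, which yields the decay $\cPhi(\delta', \alpha) \ll (1 + |\delta'|^{\alpha/k})^{-1}$. Passing to absolute values (no oscillation from $\e(-\delta')$ is needed) and integrating in a single variable,
\[ \int_{|\delta'| > \Delta} |\cPhi(\delta', \alpha)|^s\,\dd\delta' \ll \int_{\Delta}^{\infty} t^{-s\alpha/k}\,\dd t \ll \Delta^{1 - s\alpha/k}, \]
which is valid as soon as $s\alpha/k > 1$.

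Under the hypotheses inherited from Theorem~\ref{thm:waringk} (namely $s \geq s_0(k)$ together with $y \geq (\log x)^C$ for $C$ large, which forces $\alpha$ close to $1$), the condition $s\alpha/k > 1$ holds with ample margin; indeed the enclosing major-arc argument has already required the stronger inequality $s/k - 2s(1-\alpha) > 2$. There is no real obstacle here: once the rescaling $\delta' = \delta x^k$ is made explicit, the lemma collapses to a routine one-line tail truncation.
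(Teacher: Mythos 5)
Your proof is correct and follows essentially the same route as the paper: a linear change of variables $\delta' = \delta x^k$ (using $N = x^k$) followed by a tail estimate on $\int_{|\delta'|>\Delta}|\cPhi(\delta',\alpha)|^s\,\dd\delta'$ derived from the decay $\cPhi(\delta',\alpha) \ll (1+|\delta'|)^{-\alpha/k}$ of Lemma~\ref{lem:phi-check}. You have merely made explicit some details that the paper compresses, such as the rescaling and the verification that $s\alpha/k > 1$.
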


\begin{proof}
After a change of variable, the left side above becomes
\[ x^{-k} \int_{|\delta| \leq \Delta}  \cPhi(\delta,\alpha)^s e(-\delta) \dd\delta. \]
The conclusion of the lemma follows from the definition of~$\beta_{\infty}$ in~\eqref{eq:arch-factor} and the estimate
\[ \int_{|\delta| \geq \Delta}  \left| \cPhi(\delta,\alpha) \right|^s \dd\delta \ll \int_{|\delta| \geq \Delta} \delta^{-s\alpha/k} \dd\delta \ll \Delta^{1-s\alpha/k}. \]
\end{proof}

\begin{lemma}[Truncated singular series]\label{lem:truncate-series}
Let the notations and assumptions be as above. For any~$Q \geq 1$, we have
\[ \sum_{q\leq Q} \sum_{(a,q)=1} H_{a/q}(\alpha)^s \e( -aN/q ) = \prod_p \beta_p + O\left( Q^{2 - s\alpha/k + \ee} \right). \]
\end{lemma}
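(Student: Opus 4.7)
The plan is to split the singular series into a main part and a tail, bound the tail directly from the pointwise estimate on $H_{a/q}(\alpha)$, and identify the full (infinite) singular series with the Euler product $\prod_p \beta_p$ by establishing multiplicativity and then a local computation at each prime.

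\textbf{Tail bound.} I would first observe that by Lemma~\ref{lem:Haq-bound},
\[ |H_{a/q}(\alpha)| \ll q^{-\alpha/k+\varepsilon} \]
uniformly in $a$. Hence the tail satisfies
\[ \sum_{q>Q}\sum_{(a,q)=1}|H_{a/q}(\alpha)|^s \ll \sum_{q>Q} \varphi(q)\, q^{-s\alpha/k+s\varepsilon} \ll \sum_{q>Q} q^{1-s\alpha/k+\varepsilon} \ll Q^{2-s\alpha/k+\varepsilon}, \]
where the convergence is ensured by the hypothesis $s\alpha/k>2$ coming from $s\geq s_0(k)$ (compare Proposition~\ref{prop:beta-p}). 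After rescaling $\varepsilon$, this already gives the claimed error term and simultaneously shows that $\sum_{q\geq 1}\Sigma(q)$ converges absolutely, where $\Sigma(q):=\sum_{(a,q)=1}H_{a/q}(\alpha)^s\e(-aN/q)$.

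\textbf{Multiplicativity.} Next I would show that $q\mapsto \Sigma(q)$ is multiplicative. The key point is the factorisation $H_{a/q}(\alpha)=H_{a_1/q_1}(\alpha)H_{a_2/q_2}(\alpha)$ whenever $q=q_1q_2$ with $(q_1,q_2)=1$ and $a\equiv a_1\overline{q_2}q_2+a_2\overline{q_1}q_1\pmod q$. This is obtained by inserting the CRT factorisations $d_i=d_i^{(1)}d_i^{(2)}$, $b\leftrightarrow(b_1,b_2)$ into the defining formula~\eqref{eq:def-Hq} and using multiplicativity of $\mu$, $\varphi$, and of the friability indicator $\mathbf{1}_{P(d_1d_2)\leq y}$. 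Combined with multiplicativity of $\e(-aN/q)$ under the CRT, this yields $\Sigma(q_1q_2)=\Sigma(q_1)\Sigma(q_2)$.

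\textbf{Local identification.} The remaining step is to check $\sum_{m\geq 0}\Sigma(p^m)=\beta_p$ for every prime $p$. Starting from the definition~\eqref{eq:local-factor} and applying orthogonality of additive characters modulo $p^m$ to the congruence $n_1^k+\cdots+n_s^k\equiv N$, one rewrites
\[ \beta_p=\lim_{m\to\infty}\sum_{a\bmod p^m}\e(-aN/p^m)\Big(\sum_{n\bmod p^m}\mu_{p^m}(n)\,\e(an^k/p^m)\Big)^s. \]
Grouping each $a\bmod p^m$ by its reduced form $a'/p^{m'}$ with $(a',p)=1$ and $m'\leq m$, it then suffices to verify the identity
\[ \lim_{m\to\infty}\sum_{n\bmod p^m}\mu_{p^m}(n)\,\e(an^k/p^{m'}) = H_{a/p^{m'}}(\alpha), \]
which I would prove by splitting the left-hand side according to $v=v_p(n)$ and matching term-by-term with the $(d_1,d_2)$-sum in~\eqref{eq:def-Hq}: the $v=m$ term together with the $v<m$ terms for $p\leq y$ reproduces the $(d_1,d_2)$ with $P(d_1d_2)\leq y$ via the telescoping built into the weights $p^{(1-\alpha)v}(1-p^{-\alpha})$, while for $p>y$ only $v=0$ survives, giving $d_1=d_2=1$. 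This matches the classical computation (and presumably is carried out in Lemma~\ref{lem:S=H(a/q)} of the appendix). Together with the multiplicativity, this gives $\sum_{q\geq 1}\Sigma(q)=\prod_p\beta_p$, and combining with the tail bound completes the proof.

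\textbf{Main obstacle.} The routine part is the tail bound; the delicate step is the local identification at primes $p\leq y$, where the $\mu(d_2)$ twist in $H_{a/q}$ is precisely what compensates for the bias of friable numbers toward divisibility by small primes encoded in $\mu_{p^m}$. Verifying that these two weightings match exactly requires a careful bookkeeping of the contributions of each $v_p(n)$.
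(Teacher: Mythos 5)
Your proof is correct and follows essentially the same route the paper takes: a tail bound via the pointwise estimate $|H_{a/q}(\alpha)| \ll q^{-\alpha/k+\varepsilon}$ (Lemma~\ref{lem:Haq-bound}), plus the identity $\sum_{q\geq 1}\Sigma(q) = \prod_p \beta_p$ established by multiplicativity and a local computation at each prime. The paper packages the local step slightly differently --- it passes through the auxiliary exponential sum $S(x,y;q,a)$, proves $S(x,y;q,a)=H_{a/q}(\alpha)$ (Lemma~\ref{lem:S=H(a/q)}), proves the projection identity for $\mu_{p^m}$ (Lemma~\ref{lem:mu-translate}), and then derives $q M(q)=\sum_{d\mid q}S(d)$ (Lemma~\ref{lem:Sq-Mq}) before letting $m\to\infty$ to get $\beta_p=\sum_{\ell\geq 0}S(p^\ell)$ --- whereas you apply orthogonality and regroup by reduced fractions directly in the definition of $\beta_p$. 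These are the same calculation with the bookkeeping done in a different order, and both reduce the "delicate" matching of $\mu_{p^m}$ against the $\mu(d_2)$-twist in~\eqref{eq:def-Hq} to the same term-by-term verification according to $v_p(n)$.
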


\begin{proof}
In the appendix  we will show that
\[  \sum_{q=1}^{+\infty} \sum_{(a,q)=1}  H_{a/q}(\alpha)^s e( -aN/q ) = \prod_p \beta_p(\alpha). \]
The conclusion of the lemma then follows from
\[ \sum_{q> Q} \sum_{(a,q)=1} | H_{a/q}(\alpha)|^s \ll \sum_{q > Q} q^{1-s\alpha/k+\ee} \ll Q^{2-s\alpha/k+\ee}. \]
\end{proof}

\subsection{Minor arc analysis}

Now we prove  Lemma~\ref{lem:waring-minor}, bounding the minor arc integral by
\[ \sup_{\vth \in \Fm} |E_k(x,y; \vth)|^{0.1} \cdot \int_0^1 |E_k(x,y; \vth)|^{s-0.1} \dd\vth. \]
For~$s \geq s_0(k)$, the exponent~$s - 0.1$ exceeds the threshold~$p_0(k)$ in Theorem~\ref{cor:mv-k}, so that the integral above can be bounded by~$O(\Psi(x,y)^{s-0.1}x^{-k})$.  On the other hand, the minor arc estimate (Proposition~\ref{prop:estim-minor}) implies that
\[ \sup_{\vth\in\Fm} |E_k(x,y;\vth)| \ll \Psi(x,y) Q^{-c}. \]
This completes the proof of Lemma~\ref{lem:waring-minor}.

\appendix

\section{The local factors in friable Waring's problem}

The aim of this appendix is to establish Propositions~\ref{prop:beta-p} about local factors, by first connecting~$\beta_p$ with exponential sums weighted by~$\mu_{p^m}$, and then expressing the exponential sum in terms of the classical ones (corresponding to~$y = x$). 

Let the notations and assumptions be as in the statement of Theorem~\ref{thm:waringk}, and recall Definition~\ref{eq:local-factor}. We have defined~$\mu_q$ for~$q = p^m$ a prime power. Now extend~$\mu_q$ multiplicatively to all~$q$ (so that~$\mu_{q_1q_2}(b) = \mu_{q_1}(b)\mu_{q_2}(b)$ for any~$b$, whenever~$(q_1,q_2) = 1$), and note that the value of~$\mu_q(b)$ depends only on~$(b,q)$. For~$0 \leq a \leq q$ and~$(a,q) = 1$, define the exponential sum
\[ S(x,y; q,a) = \sum_{b\mod{q}} \mu_q(b) \e\bigg( \frac{ab^k}{q} \bigg), \]
which should be compared with the exponential sum appearing in the classical Waring's problem:
\[ S(q,a) = \frac{1}{q} \sum_{b\mod{q}}  \e\bigg( \frac{ab^k}{q} \bigg). \]
Recall the definition of~$H_{a/q}(\alpha)$ in~\eqref{eq:def-Hq}.

\begin{lemma}\label{lem:S=H(a/q)} 
For any~$0 \leq a \leq q$ and~$(a,q) = 1$, we have~$S(x,y;a,q) = H_{a/q}(\alpha)$. 
\end{lemma}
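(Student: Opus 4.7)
The plan is to reorganize both sides as sums over the greatest common divisor $d = (b,q)$ and then check that the coefficients agree by a reduction to prime powers.

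First I would swap the order of summation in the definition \eqref{eq:def-Hq} of $H_{a/q}(\alpha)$, grouping over the outer variable $d_1 = d$ and moving $d_2 = e$ inside. This gives
\[ H_{a/q}(\alpha) = \sum_{\substack{d \mid q \\ P(d) \leq y}} c_d \cdot T(d), \qquad T(d) := \ssum{b \bmod q \\ (b,q) = d} \e\Big(\frac{ab^k}{q}\Big), \qquad c_d := \frac{1}{d^\alpha \varphi(q/d)} \ssum{e \mid q/d \\ P(e) \leq y} \frac{\mu(e)}{e^\alpha}. \]
On the other side, since $\mu_q(b)$ depends only on $(b,q)$, we may write $\mu_q(b) = \widetilde{\mu}_q(d)$ for $(b,q)=d$, and thus
\[ S(x,y;q,a) = \sum_{d \mid q} \widetilde{\mu}_q(d) \cdot T(d). \]
Note that if $P(d) > y$, then some prime $p > y$ divides $d$ to positive order, and the first clause in the definition of $\mu_{p^m}$ forces $\widetilde{\mu}_q(d) = 0$; this matches the restriction $P(d) \leq y$ on the sum for $H_{a/q}$. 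Hence the identity reduces to proving $c_d = \widetilde{\mu}_q(d)$ for every $d \mid q$ with $P(d) \leq y$.

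Both $c_d$ and $\widetilde{\mu}_q(d)$ are multiplicative with respect to the coprime factorization $q = q_1 q_2$, $d = d_1 d_2$ with $d_i \mid q_i$: for $c_d$ this follows from the multiplicativity of $\varphi$ and of the Möbius-weighted sum; for $\widetilde{\mu}_q$ it is the multiplicative extension used to define it. Therefore it suffices to verify $c_{p^v} = \widetilde{\mu}_{p^m}(p^v)$ for each prime power $q = p^m$ and $0 \leq v \leq m$. This is a short computation in three cases, matching the three clauses defining $\mu_{p^m}$:
\begin{itemize}
\item If $p > y$, then $P(d) \leq y$ forces $v = 0$, and $c_1 = 1/\varphi(p^m) = \widetilde{\mu}_{p^m}(1)$.
\item If $p \leq y$ and $v < m$, the inner sum $\sum_{e \mid p^{m-v}} \mu(e)/e^\alpha = 1 - p^{-\alpha}$, giving $c_{p^v} = (1-p^{-\alpha})/(p^{v\alpha}\varphi(p^{m-v}))$; expanding $\varphi(p^{m-v}) = p^{m-v-1}(p-1)$ and $\varphi(p^m) = p^{m-1}(p-1)$, one checks directly that this equals $p^{(1-\alpha)v}(1-p^{-\alpha})/\varphi(p^m) = \widetilde{\mu}_{p^m}(p^v)$.
\item If $p \leq y$ and $v = m$, then the only $e \mid q/d = 1$ is $e = 1$, so $c_{p^m} = p^{-m\alpha} = \widetilde{\mu}_{p^m}(p^m)$.
\end{itemize}
None of these steps presents a real obstacle; the only subtlety is keeping track of the friability restriction $P(d_1 d_2) \leq y$ through the interchange of summation, which is handled cleanly by the observation above that $\widetilde{\mu}_q$ vanishes whenever $d$ has a prime factor exceeding $y$.
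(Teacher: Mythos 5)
Your proposal is correct and takes essentially the same approach as the paper: group the sum defining $H_{a/q}(\alpha)$ by $d = (b,q)$, reduce to the identity $c_d = \mu_q(b)$ for $(b,q)=d$, observe that both sides are multiplicative in $q$, and verify the prime power case. The paper compresses the final verification into ``a straightforward comparison with the definition of $\mu_{p^m}(b)$'', which you have simply spelled out in full.
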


\begin{proof}
By definitions, it suffices to show that for any~$b\mod{q}$ with~$(b,q) = d_1$ we have
\[ \mu_q(b) = \sum_{\substack{d_1d_2 \mid q \\ P(d_1d_2) \leq y}} \frac{\mu(d_2)}{(d_1d_2)^{\alpha} \varphi(q/d_1)}. \]
As functions of~$q$, both sides above are multiplicative in~$q$, so that it suffices to verify this for~$q = p^m$ a prime power. This is a straightforward comparison with the definition of~$\mu_{p^m}(b)$.
\end{proof}

The following lemma says that the probability measure~$\mu_{p^m}$ behaves well under the natural projection~$\Z/p^m\Z \rightarrow \Z/p^{m-\ell}\Z$.

\begin{lemma}\label{lem:mu-translate}
For any prime~$p$, any integers~$0\leq\ell\leq m$, and any~$b\in\Z$, we have the identity
\[ \sum_{u\in\Z/p^{\ell}\Z} \mu_{p^m}(up^{m-\ell}+b) = \mu_{p^{m-\ell}}(b). \]
\end{lemma}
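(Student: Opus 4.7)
The plan is to verify the identity directly from the definition of $\mu_{p^m}$, by tracking the $p$-adic valuation $v_p(up^{m-\ell}+b)$ as $u$ ranges over $\Z/p^\ell\Z$ and then collecting weights. Since $\mu_{p^m}(c)$ depends only on $\min(v_p(c),m)$, and the definition splits at the threshold $p\leq y$ versus $p>y$, I would handle these two regimes separately, and for $p\leq y$ split further according to how $v_p(b)$ compares to $m-\ell$.

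For $p>y$ the measure is uniform on units and zero on non-units, so the argument is immediate. When $m>\ell$ one has $up^{m-\ell}+b\equiv b\pmod p$, so either all terms vanish (if $p\mid b$) or all terms equal $\vphi(p^m)^{-1}$ (if $p\nmid b$); in both cases one finds $\mu_{p^{m-\ell}}(b)$ after using $p^\ell/\vphi(p^m)=1/\vphi(p^{m-\ell})$. The boundary case $m=\ell$ reduces to the total mass identity $\sum_c \mu_{p^m}(c)=1$, which matches the convention $\mu_1(b)=1$.

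For $p\leq y$ with $v_p(b)<m-\ell$, the valuation $v_p(up^{m-\ell}+b)=v_p(b)$ is independent of $u$, so the left side collapses to $p^\ell\mu_{p^m}(b)$ and the same arithmetic identity converts this into $\mu_{p^{m-\ell}}(b)$. The main substance of the lemma, and the step I expect to require real bookkeeping, is the complementary subcase $v_p(b)\geq m-\ell$. Writing $b=p^{m-\ell}b'$, one has $up^{m-\ell}+b=p^{m-\ell}(u+b')$, and I would classify residues $u\in\Z/p^\ell\Z$ according to $w:=v_p(u+b')$: for each $w\in\{0,\ldots,\ell-1\}$ there are exactly $\vphi(p^{\ell-w})$ such residues, while exactly one residue satisfies $w\geq \ell$. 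Plugging these counts into the definition of $\mu_{p^m}$ yields, for the first family, a finite geometric sum in $p^{-\alpha}$, plus the singleton contribution $p^{-\alpha m}$ from the $w\geq \ell$ class.

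The only genuine obstacle is showing that this geometric sum telescopes correctly. After factoring out $(1-p^{-\alpha})p^{-\alpha(m-\ell)}$ and reindexing, the geometric series should evaluate to $p^{-\alpha(m-\ell)}-p^{-\alpha m}$; combined with the singleton term $p^{-\alpha m}$ this gives precisely $p^{-\alpha(m-\ell)}$, which is the value of $\mu_{p^{m-\ell}}(b)$ when $v_p(b)\geq m-\ell$. With this, all cases are accounted for and the lemma is proved.
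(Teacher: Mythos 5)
Your proof is correct and follows essentially the same route as the paper: both arguments reduce the identity to the case where the valuation is constant (giving $p^\ell\mu_{p^m}(b)=\mu_{p^{m-\ell}}(b)$ by the Euler-phi identity) and the case $v_p(b)\geq m-\ell$, where one counts residues by valuation and collapses the resulting finite geometric series in $p^{-\alpha}$ telescopically. The only cosmetic difference is that the paper discards $b$ outright (since the coset $b+p^{m-\ell}\Z/p^m\Z$ equals $p^{m-\ell}\Z/p^m\Z$) and indexes by $v=m-\ell+w$, whereas you keep $b'$ and index by $w=v_p(u+b')$; these are the same computation.
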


\begin{proof}
First assume that~$(b,p^m) < p^{m-\ell}$. Then~$(up^{m-\ell}+b, p^m)= (b, p^m)$ for each~$u$, and thus the sum is equal to~$p^{\ell}\mu_{p^m}(b)$. This is easily seen to be equal to~$\mu_{p^{m-\ell}}(b)$ from the definition.

Now assume that~$(b, p^m) \geq p^{m-\ell}$. Then the sum becomes
\[ S = \sum_{u\in\Z/p^{\ell}\Z} \mu_{p^m}(up^{m-\ell}) = \sum_{v=m-\ell}^m \varphi(p^{m-v}) \mu_{p^m}(p^v),   \]
where~$\varphi(p^{m-v})$ is the number of~$b\in\Z/p^m\Z$ with~$(b, p^m) = p^v$. If~$p>y$, then the only nonzero term in the sum above appears when~$\ell = m$, and thus~$S = \1_{\ell = m} = \mu_{p^{m-\ell}}(0)$ as desired. If~$p\leq y$, then
\[ S = \sum_{v=m-\ell}^{m-1} \varphi(p^{m-v}) \varphi(p^m)^{-1} p^{(1-\alpha)v} (1-p^{-\alpha}) + p^{-\alpha m} = p^{-\alpha(m-\ell)} = \mu_{p^{m-\ell}}(0),  \]
as desired. This completes the proof. 
\end{proof}

For any positive integer~$q$, define
\[ S(q) = \sum_{a\mod{q}^{\times}} S(x,y;q,a)^s  \e\bigg(\frac{-aN}{q}\bigg) = \sum_{a\mod{q}^{\times}} H_{a/q}(\alpha)^s  \e\bigg(\frac{-aN}{q}\bigg). \]
From the standard fact that
\[ S(x,y;q,a) S(x,y;q',a') = S(x,y; qq', aq'+a'q) \]
for~$(q,q')=(a,q)=(a',q')=1$, it follows that~$S(q)$ is multiplicative in~$q$. 

\begin{lemma}\label{lem:Sq-Mq}
For any positive integer~$q$, let~$M(q)$ be the number of solutions to~$n_1^k+\cdots+n_s^k\equiv N\mod q$ counted with weights given by~$\mu_q$:
\[ M(q) = \ssum{n_1,\cdots,n_s\in\Z/q\Z \\ n_1^k+\cdots+n_s^k\equiv N\mod q} \mu_q(n_1)\cdots\mu_q(n_s). \]
Then
\[ \sum_{d\mid q} S(d) = q M(q). \]
\end{lemma}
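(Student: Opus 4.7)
\textbf{Proof proposal for Lemma~\ref{lem:Sq-Mq}.} The plan is a standard circle-method factorization, using orthogonality of additive characters mod~$q$ to detect the congruence, followed by a ``collapse'' step that identifies the resulting twisted $\mu_q$-sum with $S(x,y; d, a')$ for an appropriate divisor~$d \mid q$.

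First, I would detect the condition $n_1^k+\cdots+n_s^k \equiv N \mod q$ in the definition of~$M(q)$ by writing
\[
qM(q) = \sum_{n_1, \ldots, n_s \in \Z/q\Z} \mu_q(n_1)\cdots\mu_q(n_s) \sum_{a \mod q} \e\Big(\frac{a(n_1^k+\cdots+n_s^k-N)}{q}\Big).
\]
Swapping the order of summation and factoring the product over~$j$ gives
\[
qM(q) = \sum_{a \mod q} \e\Big(\frac{-aN}{q}\Big)\, T(q,a)^s, \qquad T(q,a) := \sum_{n \in \Z/q\Z} \mu_q(n)\, \e\Big(\frac{an^k}{q}\Big).
\]

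The main step is to show $T(q,a) = S(x,y; d, a')$, where $d = q/(a,q)$ and $a' = a/(a,q)$ (so that $(a',d)=1$ and $a/q = a'/d$). The idea is to parameterize $n \in \Z/q\Z$ as $n = n_0 + dv$ with $n_0 \in \Z/d\Z$ and $v \in \Z/(q/d)\Z$; then $\e(an^k/q) = \e(a'n^k/d)$ depends only on $n_0$, so
\[
T(q,a) = \sum_{n_0 \in \Z/d\Z} \e\Big(\frac{a' n_0^k}{d}\Big) \sum_{v \in \Z/(q/d)\Z} \mu_q(n_0 + dv).
\]
The inner sum equals $\mu_d(n_0)$. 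For prime power~$q = p^m$ and $d = p^{m-\ell}$ this is exactly Lemma~\ref{lem:mu-translate}. For general~$q$, the identity extends via CRT: writing $q = \prod_p p^{m_p}$ and $d = \prod_p p^{\ell_p}$, the change of variable $u_p := (d/p^{\ell_p})v_p$ (legitimate since $d/p^{\ell_p}$ is a unit mod $p^{m_p-\ell_p}$) reduces the product of local sums to a product of instances of Lemma~\ref{lem:mu-translate}, giving exactly $\prod_p \mu_{p^{\ell_p}}(n_{0,p}) = \mu_d(n_0)$. Consequently $T(q,a) = \sum_{n_0 \in \Z/d\Z} \mu_d(n_0) \e(a' n_0^k/d) = S(x,y;d,a')$.

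Finally I would regroup the $a$'s by the value of $(a,q)$. For each divisor $d \mid q$, the residues $a \in \Z/q\Z$ with $(a,q) = q/d$ are exactly the $(q/d) a'$ for $a' \in (\Z/d\Z)^\times$, and for these we have $a/q = a'/d$. Substituting into the expression for $qM(q)$ yields
\[
qM(q) = \sum_{d \mid q} \sum_{\substack{a' \mod d \\ (a',d)=1}} \e\Big(\frac{-a'N}{d}\Big)\, S(x,y;d,a')^s = \sum_{d \mid q} S(d),
\]
as desired. The only place where any real thinking is required is the collapsing identity for $T(q,a)$; everything else is formal manipulation with orthogonality and CRT.
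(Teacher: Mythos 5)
Your proof is correct and follows essentially the same approach as the paper: orthogonality to expand $qM(q)$, grouping the outer frequencies $a$ by $\gcd(a,q)$, and Lemma~\ref{lem:mu-translate} to collapse the inner weighted sum down to $S(x,y;d,a')$. The only organizational difference is that the paper first reduces the whole identity to prime powers $q=p^m$ by noting both sides are multiplicative, whereas you carry a general $q$ throughout and invoke CRT only where Lemma~\ref{lem:mu-translate} is applied; these are interchangeable, and your CRT step (the unit change of variable $u_p = (d/p^{\ell_p})v_p$) is carried out correctly.
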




\begin{proof} 
Since both sides are multiplicative in~$q$, it suffices to prove the assertion when~$q=p^m$ is a prime power.
By orthogonality, we can write
\[ M(q) = \frac{1}{q} \sum_{a=1}^q \bigg( \sum_{b=1}^q \mu_q(b) \e(ab^k/q) \bigg)^s \e(-aN/q). \]
For any~$d\mid q$, the contribution from those terms with~$(a,q)=d$ is
\[ M_d(q) = \frac{1}{q} \ssum{1\leq a\leq q/d\\ (a,q/d)=1} \bigg( \sum_{b=1}^q \mu_q(b) \e(adb^k/q) \bigg)^s \e(-adN/q). \]
Suppose that~$d=p^{\ell}$ for some~$0\leq \ell \leq m$. If we write~$b=up^{m-\ell}+v$ for some~$1\leq v\leq p^{m-\ell}$ and~$0\leq u<p^{\ell}$, the inner sum over~$b$ becomes
\[ \sum_{u=0}^{p^{\ell}-1} \sum_{v=1}^{p^{m-\ell}} \mu_{p^m}(up^{m-\ell}+v) \e(av^k/p^{m-\ell}) = \sum_{b=1}^{p^{m-\ell}} \bigg(\sum_{u=0}^{p^{\ell}-1} \mu_{p^m}(up^{m-\ell}+b)\bigg) \e(ab^k/p^{m-\ell}) = S(x,y;p^{m-\ell},a)  \]
by Lemma~\ref{lem:mu-translate}. It follows that
\[ M_{p^{\ell}}(p^m) = \frac{1}{p^m} \sum_{\substack{1\leq a \leq p^{m-\ell}\\ (a,p)=1}}S(x,y;p^{m-\ell},a)^s \e(-aN/p^{m-\ell}) = \frac{1}{p^m} S(p^{m-\ell}). \]
This completes the proof.
\end{proof}

The following lemma provides an upper bound for the exponential sum~$S(x,y;q,a)$ by expressing it in terms of the classical sum~$S(q,a)$ (alternatively, one may also proceed directly with the definition~\eqref{eq:def-Hq}).

\begin{lemma}\label{lem:Sqa}
For any~$0 \leq a \leq q$ with~$(a,q) = 1$, we have 
\[ |S(x,y;q,a)| \leq C^{\omega(q)} q^{-\alpha/k}, \]
where~$C \geq 1$ is an absolute constant. In particular,
\[ |S(q)| \ll q^{1-s\alpha/k + \ee} \]
for any~$\ee > 0$.
\end{lemma}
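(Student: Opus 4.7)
The plan is to reduce the bound on $S(x,y;q,a)$ to prime powers via the Chinese Remainder Theorem, treat prime powers by a case split on $p \leq y$ versus $p > y$, and then deduce the bound on $S(q)$ by the trivial estimate on the number of terms. Since $\mu_q$ was built multiplicatively, for $q = \prod_i p_i^{m_i}$ we have the factorisation $S(x,y;q,a) = \prod_i S(x,y; p_i^{m_i}, a_i)$, where $a_i$ is defined by $a/q \equiv \sum_i a_i/p_i^{m_i} \pmod 1$ and $(a_i, p_i) = 1$. It therefore suffices to bound $|S(x,y;p^m, a)|$ by $C p^{-m\alpha/k}$ for each prime power, modulo a mild factor to be absorbed later.

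For $p > y$, the measure $\mu_{p^m}$ is uniform $\varphi(p^m)^{-1}$ on units of $\Z/p^m\Z$ and zero elsewhere, so $S(x,y;p^m, a) = \varphi(p^m)^{-1} \sum_{(b,p)=1,\, b \bmod p^m} \e(ab^k/p^m)$. The unit-restricted Waring sum on the right is bounded by $C(k)\,p^{m(1-1/k)}$ by the classical estimate (Vaughan~\cite[Lemma~2.8]{Vaughan97}), hence $|S(x,y;p^m, a)| \leq C(k)\,p^{-m/k} \leq C(k)\,p^{-m\alpha/k}$. For $p \leq y$, I would decompose by $v = v_p(b)$ to obtain
\[
S(x,y;p^m, a) = \sum_{v=0}^{m-1} \frac{(1-p^{-\alpha})\,p^{(1-\alpha)v}}{\varphi(p^m)}\, T_v + p^{-m\alpha}, \qquad T_v := \sum_{(b, p^m) = p^v} \e(ab^k/p^m).
\]
Writing $b = p^v c$ with $(c,p) = 1$, the sum $T_v$ reduces either to $p^{v(k-1)}$ times a classical Waring sum of modulus $p^{m-vk}$ (when $vk < m$), or to the trivial count $\varphi(p^{m-v})$ (when $vk \geq m$). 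Classical estimates then give $|T_v| \leq C(k)\,p^{m(1-1/k)}$ uniformly in $v$, and inserting this together with the geometric factors produces the bound $|S(x,y;p^m, a)| \ll_k m\,p^{-m\alpha/k}$; the factor of $m$ is absorbed either by enlarging $C$ in $C^{\omega(q)}$ (using $m \leq \log q / \log 2$) or by weakening the exponent by any $\ee > 0$ via $m \ll_{\ee,k} p^{m\ee}$.

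The second bound follows almost for free: the sum $S(q)$ has $\varphi(q) \leq q$ terms and each is bounded by $|S(x,y;q,a)|^s \leq C^{s\omega(q)} q^{-s\alpha/k}$, giving $|S(q)| \leq C^{s\omega(q)} q^{1-s\alpha/k}$. The divisor-type inequality $C^{s\omega(q)} \ll_{\ee,s,k} q^\ee$ then yields $|S(q)| \ll q^{1-s\alpha/k + \ee}$. The principal technical obstacle arises in the case $p \leq y$ when $\alpha$ is close to~$1$ and $p$ is small: the prefactor $(1-p^{-\alpha})$ no longer tames the geometric sum $\sum_v p^{v(1-\alpha)}$, which instead behaves like an arithmetic sum of length $m$. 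Reconciling this cleanly with the $C^{\omega(q)}$ form in the statement requires either the absorption trick above or a finer analysis exploiting the telescoping visible in the identity $H_{a/p^m}(1) = S(p^m, a)$ recovered at $\alpha=1$. Either route suffices for the downstream application, since only the weaker conclusion $|S(q)| \ll q^{1-s\alpha/k+\ee}$ is actually used in Lemma~\ref{lem:truncate-series}.
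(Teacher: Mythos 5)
The decomposition by $v = v_p(b)$ and the identity $T_v = p^{v(k-1)}\sum_{(c,p)=1,\,c\bmod p^{m-vk}}\e(ac^k/p^{m-vk})$ are correct, as is the uniform estimate $|T_v|\le C(k)\,p^{m(1-1/k)}$. The gap is that plugging this uniform bound into your decomposition does \emph{not} yield $|S(x,y;p^m,a)|\ll_k m\,p^{-m\alpha/k}$. Carrying out the summation, one finds
\[
|S(x,y;p^m,a)| \ll m\,\frac{p^{m(1-1/k)-\alpha(m-1)}}{p-1}+p^{-m\alpha},
\]
and comparing exponents to the target $-m\alpha/k$ gives a surplus of $(1-\alpha)\bigl(m(1-1/k)-1\bigr)$, which grows linearly in $m$ for every $k\ge 2$. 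This is not a bounded-power-of-$m$ loss; when $\alpha$ is bounded away from $1$ it is a positive power of $p^m$ and cannot be absorbed into $p^{m\ee}$ or into $C^{\omega(q)}$. (For $\alpha\le 1-1/k$ your exponent is in fact positive and the bound is vacuous.) Your closing diagnosis is also backwards: the regime $\alpha\to 1$ is the \emph{harmless} one for your argument, since the geometric sum then only grows like $m$; the problematic regime is $\alpha$ away from $1$, where $\sum_v p^{(1-\alpha)v}$ grows exponentially in $m$ and the prefactor $(1-p^{-\alpha})$ is of order one and does nothing to control it.

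The missing idea is the telescoping cancellation that the paper exploits. Rather than bounding each $T_v$ in absolute value, one should note the exact identity (for $vk<m$)
\[
T_v = p^{m-v}S(p^{m-vk},a) - p^{m-v-1}S(p^{m-(v+1)k},a),
\]
where $S(p^j,a)=p^{-j}\sum_{b\bmod p^j}\e(ab^k/p^j)$ is the \emph{complete} normalized Waring sum. Substituting this and performing Abel summation against the weights $(1-p^{-\alpha})p^{(1-\alpha)v}/\varphi(p^m)$ produces the extra factor $(1-p^{\alpha-1})$ in front of the sum $\sum_{1\le v<v_0}p^{-v\alpha}S(p^{m-vk},a)$ — see the paper's identity for $p\le y$ — and it is precisely $(1-p^{\alpha-1})\sum_v p^{v(1-\alpha)} \le p^{(v_0-1)(1-\alpha)}$ that tames the geometric blow-up and yields the clean bound $C\,p^{-m\alpha/k}$ uniformly in $\alpha\in(0,1]$. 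Without this telescoping step, your argument only proves the lemma under the extra hypothesis $1-\alpha\le \ee/(1-1/k)$, which is not what is stated (and, since the paper states the lemma for all $\alpha$, should not be silently imported).
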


\begin{proof}
By multiplicativity it suffices to prove these when~$q = p^m$ is a prime power. By definition we may express~$S(x,y;q,a)$ in terms of the classical~$S(q,a)$ as follows.
If~$p>y$, then
\[ S(x,y;p^m,a) = \begin{cases} \frac{1}{p-1} ( pS(p^m,a) - 1 ) & \text{if }m\leq k\\ \frac{1}{p-1} (pS(p^m,a) - S(p^{m-k},a)) & \text{if }m>k. \end{cases} \]
If~$p\leq y$, then
\begin{align*}
 S(x,y;p^m,a) = & \frac{(1-p^{-\alpha})(1-p^{\alpha-1})}{1-p^{-1}} \sum_{1\leq v<v_0} p^{-v\alpha} S(p^{m-vk},a) + \frac{1-p^{-\alpha}}{1-p^{-1}} S(p^m,a) \\
 & + \frac{1}{\varphi(p^m)} \left[ p^{m-\alpha v_0}(1-p^{\alpha-1}) + (p^{(1-\alpha)(m-1)}-1)(1-p^{-\alpha}) \right], 
 \end{align*}
where~$v_0 = \lceil m/k \rceil$. Note that 
\[  \frac{1}{\varphi(p^m)} \left[ p^{m-\alpha v_0}(1-p^{\alpha-1}) + (p^{(1-\alpha)(m-1)}-1)(1-p^{-\alpha}) \right] \ll p^{-\alpha v_0} + p^{-\alpha m-1+\alpha} \ll p^{-\alpha m/k}. \]
The claimed bound on~$S(x,y;q,a)$ follows from these using the classical estimate~$|S(q,a)| \ll q^{-1/k}$ (see~\cite[Theorem 4.2]{Vaughan97}) after some straightforward algebra, and the claimed bound on~$S(q)$ follows by the triangle inequality.
\end{proof}

\begin{proof}[Proof of Proposition~\ref{prop:beta-p}]
We start with justifying the existence of the limit in the definition of~$\beta_p$. By Lemma~\ref{lem:Sq-Mq}, we have
\begin{equation}\label{eq:beta-p-M} 
\beta_p = \lim_{m \rightarrow \infty} p^m M(p^m) = \sum_{\ell = 0}^{\infty} S(p^{\ell}).
\end{equation}
By Lemma~\ref{lem:Sqa}, the infinite sum above is absolutely convergent, and more precisely we have
\[ | \beta_p - 1| \ll \sum_{\ell \geq 1} p^{\ell(1-s\alpha/k + \ee)} \ll p^{1-s\alpha/k + \ee}. \]
Hence the infinite product~$\prod_p \beta_p$ converges for~$s \geq s_0(k)$.

It remains to show that~$\beta_p > 0$ for each prime~$p$. For~$p > y$, this follows from the bound on~$|\beta_p-1|$ above. For~$p \leq y$, from the definition of~$\mu_{p^m}(b)$ we have
\[ \mu_{p^m}(b) \geq p^{-m} \cdot \frac{1-p^{-\alpha}}{1-p^{-1}} \]
for any~$b$. This shows that~$\beta_p$ is at least 
\[ \left(\frac{1-p^{-\alpha}}{1-p^{-1}}\right)^s  \]
times the value of~$\beta_p$ in the classical case~$y=x$, which is positive when~$s \geq s_0(k)$ (see Lemma 2.12, 2.13, and 2.15 in~\cite{Vaughan97}).
\end{proof}

Observe that by~\eqref{eq:beta-p-M} and the multiplicativity of~$S(q)$, we have
\[ \prod_p \beta_p = \sum_{q=1}^{+\infty} S(q) = \sum_{q=1}^{+\infty} \sum_{a\mod{q}^{\times}} H_{a/q}(\alpha)^s \e(-aN/q). \]
This was used in proving Lemma~\ref{lem:truncate-series} in the major arc analysis.

\bibliographystyle{plain}
\bibliography{biblio}

\begin{thebibliography}{10}

\bibitem{BalogSarkozy}
A.~Balog and A.~S{\'a}rk{\"o}zy.
\newblock On sums of sequences of integers. {I}.
\newblock {\em Acta Arith.}, 44(1):73--86, 1984.

\bibitem{BBD-smooth4}
V.~Blomer, J.~Br{\"u}dern, and R.~Dietmann.
\newblock Sums of smooth squares.
\newblock {\em Compos. Math.}, 145(6):1401--1441, 2009.

\bibitem{Bou89}
J.~Bourgain.
\newblock On {$\Lambda(p)$}-subsets of squares.
\newblock {\em Israel J. Math.}, 67(3):291--311, 1989.

\bibitem{BDG-Decoupling}
J.~Bourgain, C.~Demeter, and L.~Guth.
\newblock Proof of the main conjecture in {V}inogradov's mean value theorem for
  degrees higher than three.
\newblock {\em {arXiv} e-print}, 2015.
\newblock \url{http://arxiv.org/abs/1512.01565v1}.

\bibitem{Breteche-expo}
R.~de~la Bret{\`e}che.
\newblock Sommes d'exponentielles et entiers sans grand facteur premier.
\newblock {\em Proc. London Math. Soc. (3)}, 77(1):39--78, 1998.

\bibitem{Breteche-sommes}
R.~de~la Bret{\`e}che.
\newblock Sommes sans grand facteur premier.
\newblock {\em Acta Arith.}, 88(1):1--14, 1999.

\bibitem{BG}
R.~de~la Bret{\`e}che and A.~Granville.
\newblock Densit\'e des friables.
\newblock {\em Bull. Soc. Math. France}, 142(2):303--348, 2014.

\bibitem{BT2005}
R.~de~la Bret{\`e}che and G.~Tenenbaum.
\newblock Propri\'et\'es statistiques des entiers friables.
\newblock {\em Ramanujan J.}, 9(1-2):139--202, 2005.

\bibitem{BW01}
J.~Br{\"u}dern and T.~D. Wooley.
\newblock On {W}aring's problem for cubes and smooth {W}eyl sums.
\newblock {\em Proc. London Math. Soc. (3)}, 82(1):89--109, 2001.

\bibitem{dB}
N.~G. de~Bruijn.
\newblock On the number of positive integers {$\leq x$} and free of prime
  factors {$>y$}.
\newblock {\em Nederl. Acad. Wetensch. Proc. Ser. A.}, 54:50--60, 1951.

\bibitem{D2014}
S.~Drappeau.
\newblock Sommes friables d'exponentielles et applications.
\newblock {\em Canad. J. Math.}, 67(3):597--638, 2015.

\bibitem{FouvryTenenbaum}
{\'E}.~Fouvry and G.~Tenenbaum.
\newblock Entiers sans grand facteur premier en progressions arithmetiques.
\newblock {\em Proc. London Math. Soc. (3)}, 63(3):449--494, 1991.

\bibitem{Granville}
A.~Granville.
\newblock Smooth numbers: computational number theory and beyond.
\newblock In {\em Algorithmic number theory: lattices, number fields, curves
  and cryptography}, volume~44 of {\em Math. Sci. Res. Inst. Publ.}, pages
  267--323. Cambridge Univ. Press, Cambridge, 2008.

\bibitem{GT-manifold}
B.~Green and T.~Tao.
\newblock The quantitative behaviour of polynomial orbits on nilmanifolds.
\newblock {\em Ann. of Math. (2)}, 175(2):465--540, 2012.

\bibitem{Har97}
G.~Harcos.
\newblock Waring's problem with small prime factors.
\newblock {\em Acta Arith.}, 80(2):165--185, 1997.

\bibitem{Harman}
G.~Harman.
\newblock Trigonometric sums over primes. {I}.
\newblock {\em Mathematika}, 28(2):249--254 (1982), 1981.

\bibitem{HarperBV}
A.~J. Harper.
\newblock Bombieri-{V}inogradov and {B}arban-{D}avenport-{H}alberstam type
  theorems for smooth numbers.
\newblock {\em {arXiv} e-print}, 2012.
\newblock \url{http://arxiv.org/abs/1208.5992}.

\bibitem{Harper}
A.~J. Harper.
\newblock Minor arcs, mean values, and restriction theory for exponential sums
  over smooth numbers.
\newblock {\em Compos. Math.}, to appear, 2015.

\bibitem{Hilbert}
D.~Hilbert.
\newblock Beweis f\"ur die {D}arstellbarkeit der ganzen {Z}ahlen durch eine
  feste {A}nzahl {$n^{ter}$} {P}otenzen ({W}aringsches {P}roblem).
\newblock {\em Math. Ann.}, 67(3):281--300, 1909.

\bibitem{Hildebrand}
A.~Hildebrand.
\newblock On the number of positive integers {$\leq x$} and free of prime
  factors {$>y$}.
\newblock {\em J. Number Theory}, 22(3):289--307, 1986.

\bibitem{HT}
A.~Hildebrand and G.~Tenenbaum.
\newblock On integers free of large prime factors.
\newblock {\em Trans. Amer. Math. Soc.}, 296(1):265--290, 1986.

\bibitem{HT-survey}
A.~Hildebrand and G.~Tenenbaum.
\newblock Integers without large prime factors.
\newblock {\em J. Th\'eor. Nombres Bordeaux}, 5(2):411--484, 1993.

\bibitem{IK}
H.~Iwaniec and E.~Kowalski.
\newblock {\em Analytic number theory}, volume~53 of {\em American Mathematical
  Society Colloquium Publications}.
\newblock American Mathematical Society, Providence, RI, 2004.

\bibitem{Kaw05}
K.~Kawada.
\newblock On sums of seven cubes of almost primes.
\newblock {\em Acta Arith.}, 117(3):213--245, 2005.

\bibitem{Kum}
A.~V. Kumchev.
\newblock On {W}eyl sums over primes and almost primes.
\newblock {\em Michigan Math. J.}, 54(2):243--268, 2006.

\bibitem{LagariasSound}
J.~C. Lagarias and K.~Soundararajan.
\newblock Counting smooth solutions to the equation {$A+B=C$}.
\newblock {\em Proc. Lond. Math. Soc. (3)}, 104(4):770--798, 2012.

\bibitem{ten-lectures}
H.~L. Montgomery.
\newblock {\em Ten lectures on the interface between analytic number theory and
  harmonic analysis}, volume~84 of {\em CBMS Regional Conference Series in
  Mathematics}.
\newblock Published for the Conference Board of the Mathematical Sciences,
  Washington, DC; by the American Mathematical Society, Providence, RI, 1994.

\bibitem{Moree}
P.~Moree.
\newblock Integers without large prime factors: from {R}amanujan to de
  {B}ruijn.
\newblock {\em Integers}, 14A:Paper No. A5, 13, 2014.

\bibitem{Saias}
{\'E}.~Saias.
\newblock Sur le nombre des entiers sans grand facteur premier.
\newblock {\em J. Number Theory}, 32(1):78--99, 1989.

\bibitem{ITAN}
G.~Tenenbaum.
\newblock {\em Introduction \`{a} la th\'{e}orie analytique et probabiliste des
  nombres}.
\newblock \'{E}chelles. Belin, 3\textsuperscript{rd} edition, 2008.

\bibitem{Vau89}
R.~C. Vaughan.
\newblock A new iterative method in {W}aring's problem.
\newblock {\em Acta Math.}, 162(1-2):1--71, 1989.

\bibitem{Vaughan97}
R.~C. Vaughan.
\newblock {\em The {H}ardy-{L}ittlewood method}, volume 125 of {\em Cambridge
  Tracts in Mathematics}.
\newblock Cambridge University Press, Cambridge, second edition, 1997.

\bibitem{VaughanWooley-survey}
R.~C. Vaughan and T.~D. Wooley.
\newblock Waring's problem: a survey.
\newblock In {\em Number theory for the millennium, {III} ({U}rbana, {IL},
  2000)}, pages 301--340. A K Peters, Natick, MA, 2002.

\bibitem{Waring}
E.~Waring.
\newblock {\em Meditationes algebraic\ae}.
\newblock American Mathematical Society, Providence, RI, 1991.
\newblock Translated from the Latin, edited and with a foreword by Dennis
  Weeks, With an appendix by Franz X. Mayer, translated from the German by
  Weeks.

\bibitem{Wooley-cubic}
T.~Wooley.
\newblock The cubic case of the main conjecture in {V}inogradov's mean value
  theorem.
\newblock {\em {arXiv} e-print}, 2014.
\newblock \url{http://arxiv.org/abs/1407.3150v1}.

\bibitem{Woo92}
T.~D. Wooley.
\newblock Large improvements in {W}aring's problem.
\newblock {\em Ann. of Math. (2)}, 135(1):131--164, 1992.

\bibitem{Woo95b}
T.~D. Wooley.
\newblock Breaking classical convexity in {W}aring's problem: sums of cubes and
  quasi-diagonal behaviour.
\newblock {\em Invent. Math.}, 122(3):421--451, 1995.

\bibitem{Woo95}
T.~D. Wooley.
\newblock New estimates for smooth {W}eyl sums.
\newblock {\em J. London Math. Soc. (2)}, 51(1):1--13, 1995.

\bibitem{Wooley-exp-sum-smooth-general}
T.~D. Wooley.
\newblock On exponential sums over smooth numbers.
\newblock {\em J. Reine Angew. Math.}, 488:79--140, 1997.

\bibitem{WooleyMGEC}
T.~D. Wooley.
\newblock Multigrade efficient congruencing and {V}inogradov's mean value
  theorem.
\newblock {\em J. London Math. Soc. (2)}, to appear, 2015.

\bibitem{WooleyThreeCubesII}
T.~D. Wooley.
\newblock Sums of three cubes, {II}.
\newblock {\em Acta Arith.}, 170(1):73--100, 2015.

\end{thebibliography}

\end{document}